\theoremstyle{plain}
\newtheorem{theorem}{Theorem}
\newtheorem{lemma}{Lemma}
\newcommand\dynkin[8]{\vcenter{\vbox{\vfill\hbox{$#1#3#4#5#6#7#8$}
\nointerlineskip\vskip 3pt
\hbox{$\phantom{#1}\phantom{#3}#2$\hfil}\vfill}}}
\newcommand\rmA{\operatorname{A}}
\newcommand\ad{\operatorname{ad}}
\newcommand\CC{\operatorname{C}}
\newcommand\Char{\operatorname{char}}
\newcommand\DD{\operatorname{D}}
\newcommand\diag{\operatorname{diag}}
\newcommand\EE{\operatorname{E}}
\newcommand\FF{\operatorname{F}}
\newcommand\Fix{\operatorname{Fix}}
\newcommand\GL{\operatorname{GL}}
\newcommand\GSp{\operatorname{GSp}}
\newcommand\Lie{\operatorname{Lie}}
\newcommand\ppf{\operatorname{pf}}
\newcommand\sic{\operatorname{sc}}
\newcommand\SL{\operatorname{SL}}
\newcommand\SO{\operatorname{SO}}
\newcommand\Sp{\operatorname{Sp}}
\newcommand\tent{\operatorname{tent}}
\newcommand\tr{\operatorname{tr}}
\newcommand\Tran{\operatorname{Tran}}
\newcommand\fo[1]{{\varpi}_{#1}}
\newcommand\Int{{\mathbb Z}}
\newcommand\eps{\varepsilon}
\def\leq{\leqslant}
\def\le{\leqslant}
\def\geq{\geqslant}
\def\ge{\geqslant}
\begin{document}

\dedicatory{To St. Petersburg remarkable algebraist\\
Sergei Vladimirovich Vostokov,\\
a teacher, a colleague, and a friend}

\title[Normalizer of the Chevalley group of type ${\mathrm E}_7$]
{Normalizer\\ of the Chevalley group of type ${\mathrm E}_7$}

\author
{Alexander~Luzgarev}

\address{Saint-Petersburg State University\\
7/9 Universitetskaya nab.\\
St. Petersburg, 199034 Russia.}

\email{a.luzgarev@spbu.ru}

\author
{Nikolai~Vavilov}

\address{Saint-Petersburg State University\\
7/9 Universitetskaya nab.\\
St. Petersburg, 199034 Russia.}

\email{nikolai-vavilov@yandex.ru}

\keywords
{Chevalley groups, elementary subgroups, minimal modules, 
invariant forms, decomposition of unipotents, root elements, 
highest weight orbit}

\begin{abstract}
We consider the simply connected Chevalley group 
$G(\mathrm E_7,R)$ of type $\mathrm E_7$ in the 56-dimensional 
representation. The main objective of the paper is to prove
that the following four groups coincide: the normalizer of
the elementary Chevalley group $E(\mathrm E_7,R)$, the normalizer 
of the Chevalley group $G(\mathrm E_7,R)$ itself, the transporter
of $E(\mathrm E_7,R)$ into $G(\mathrm E_7,R)$, and the extended
Chevalley group $\overline G(\mathrm E_7,R)$. This holds over
an arbitrary commutative ring $R$, with all normalizers and
transporters being calculated in $\mathrm{GL}(56,R)$. Moreover,
we characterize $\overline G(\mathrm E_7,R)$ as the stabilizer of
a system of quadrics. This last result is classically known over
algebraically closed fields, here we prove that the corresponding
group scheme is smooth over $\mathbb Z$, which implies that it 
holds over arbitrary commutative rings. These results are one 
of the key steps in our subsequent paper, dedicated to the
overgroups of exceptional groups in minimal representations. 
\end{abstract}

\thanks
{The main results of the present paper were proven in the
framework of the RSF project 14-11-00297}

%\date{11 May 2015}

\maketitle

The most natural way to study general orthogonal group, is to
represent it as the stabilizer of a quadric. In the present paper,
we establish a similar geometric characterization of the 
normalizer of the simply connected Chevalley group
$G_{\sic}(\EE_7,R)$ as the stabilizer of the intersection of 133
quadrics in a 56-dimensional space, and prove that the above 
normalizer coincides with the normalizer of the elementary 
Chevalley group $E_{\sic}(\EE_7,R)$. 

The present work is a direct sequel of our papers~\cite{Vavilov_Luzgarev_norme6, 
Luzgarev_F4}, where a similar exercise was carried through 
for the groups of types $\EE_6$ and~$\FF_4$.

\section{Introduction}

In the paper~\cite{Luzgarev_overgroups} 
(see also \cite{Luzgarev_F4, Luzgarev_diss}) the second author has started 
to carry over the results by the first author and Victor Petrov 
\cite{Vavilov_Petrov_Ep, Vavilov_Petrov_EO, Petrov_unitary} on overgroups
of classical groups in vector representations, to the exceptional groups 
$E(\EE_6,R)$ and~$E(\EE_7,R)$, in minimal representations. From the 
very start, it became apparent, that one the key steps necessary to 
carry through a reduction proof in the spirit of the cited papers, 
would be an {\it explicit\/} calculation of the normalizer of the
above groups in the corresponding general linear group, $\GL(27,R)$ 
or $\GL(56,R)$, respectively. 

In our previous paper \cite{Vavilov_Luzgarev_norme6} we have 
completely solved this problem for the group $E(\EE_6,R)$, whereas in 
\cite{Luzgarev_F4} this problem is solved for the group $E(\FF_4,R)$. 
In the present paper, we consider in the same spirit the group of
type $\EE_7$.

More precisely, in \S\ref{sec:construction} we explicitly construct an
ideal $I$ in the ring of integer polynomials $\Int[x_1,\ldots,x_{56}]$,
generated by 133 quadratic forms $f_1,\ldots,f_{133}$, which has the
following property. Denote by $\Fix_R(I)$ the set of $R$-linear 
transformations, preserving the ideal $I$, see~\S\ref{sec:construction} 
for the precise definitions. 

The first main objective of the present paper, is to prove the 
following result. Here, $G_I$ denotes the affine group scheme
such that $G_I(R)=\Fix_R(I)$.

\begin{theorem}\label{thm:g_i}
There is an isomorphism $G_I\cong\overline G_{\sic}(\EE_7,-)$
of affine groups schemes over $\Int$.
\end{theorem}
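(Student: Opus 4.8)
The plan is to realize $\Fix_R(I)$ as the stabilizer of a distinguished module of quadrics and then to compare it with $\overline G_{\sic}(\EE_7,-)$ through a tangent-space calculation valid over $\Int$. Since each $f_i$ is quadratic, an element $g\in\GL(56,R)$ preserves $I$ precisely when it stabilizes the $R$-span $W=\langle f_1,\dots,f_{133}\rangle$ inside the space of quadratic forms $S^2(V^*)$, where $V=R^{56}$. The first step is to identify $W$, over $\Int$, with a copy of the adjoint module: the $133$ generators span an $\EE_7$-submodule of $S^2(V^*)$ isomorphic to $\mathfrak e_7$ (they arise as the components of the quadratic moment map, which is exactly why there are $133$ of them). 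Granting this, the \emph{easy} inclusion $\overline G_{\sic}(\EE_7,-)\hookrightarrow G_I$ follows by checking the standard generators: the elementary root unipotents and the torus act on $S^2(V^*)$ through the module structure and hence carry $W$ into itself, while the scalars $\mathbb G_m$ making up the extension act by a unit and preserve $W$ trivially. This yields a closed immersion of affine group schemes over $\Int$.

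Next I would settle the comparison over each algebraically closed field $k$. Here the classical description applies: the stabilizer in $\GL(V)$ of the system of $133$ quadrics (equivalently, of the minimal $\EE_7$-orbit they cut out, together with the invariant symplectic and quartic forms) is exactly $\overline G_{\sic}(\EE_7,k)$. This gives the set-theoretic equality $G_I(k)_{\mathrm{red}}=\overline G_{\sic}(\EE_7,k)$ on geometric points in every characteristic. What remains — and what is \emph{not} covered by the classical statement — is to promote this equality of reduced groups to an equality of group schemes over $\Int$, i.e.\ to rule out any infinitesimal thickening of $G_I$ in bad characteristic.

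The decisive step is therefore to show that $G_I$ is smooth over $\Int$ of relative dimension $134=\dim\mathfrak e_7+1$. For this I would compute the Lie algebra
\[
\Lie(G_I)=\{\, X\in\mathfrak{gl}_{56} : X\cdot W\subseteq W \,\},
\]
where $X$ acts on $S^2(V^*)$ by the differential of the $\GL$-action, and show that the $\Int$-linear equations cutting it out of $\mathfrak{gl}_{56}$ have constant rank, so that their solution space has dimension exactly $134$ over every field. The scalar matrices always lie in this stabilizer (contributing the summand $k\cdot\mathrm{id}$), and $\mathfrak e_7\subset\mathfrak{sl}_{56}$ lies in it by the module structure; the content is that \emph{nothing else} does, namely that the kernel of the induced map $\mathfrak{sl}_{56}\to\mathrm{Hom}(W,\,S^2(V^*)/W)$ is precisely $\mathfrak e_7$ in every characteristic. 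I expect this linear-algebra computation, carried out in an explicit Chevalley basis and on the weight components of $V$, to be the main obstacle: in good characteristic it follows from the multiplicity structure of the relevant $\EE_7$-modules, but in characteristics $2$ and $3$ both $S^2(V^*)$ and the embedding $\mathfrak e_7\hookrightarrow\mathfrak{sl}_{56}$ degenerate, so one must verify by direct calculation that the dimension of the stabilizer does not jump.

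Once $\Lie(G_I)$ has been shown to have constant dimension $134$, smoothness of $G_I$ over $\Int$ follows: in every fiber the dimension of the tangent space at the identity equals the fiber dimension $134$, and smoothness propagates from the identity over the whole group by translation. Finally, the closed immersion $\overline G_{\sic}(\EE_7,-)\hookrightarrow G_I$ is a monomorphism of smooth affine $\Int$-group schemes of the same relative dimension $134$ which is bijective on geometric fibers by the two preceding steps; hence it is an isomorphism, which is the assertion of Theorem~\ref{thm:g_i}.
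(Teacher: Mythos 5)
Your proposal is correct and structurally close to the paper's proof, but it differs in how the key ingredients are obtained, so a comparison is worthwhile. Both arguments rest on the same three pillars: the containment $\overline G_{\sic}(\EE_7,-)\subseteq G_I$ (your moment-map identification of the span $W$ of the $133$ quadrics with the adjoint module is exactly what the paper verifies by hand in \S5, Theorem~\ref{thm:e7_in_fix}, by computing the action of each root unipotent on the generators $f_{\rho,\sigma}$ and $g_\alpha$ --- so in your text this step is deferred, not avoided); the identification over algebraically closed fields (you call it classical, while the paper proves it in Lemma~\ref{lemma:g_i_field} via Seitz's classification of maximal subgroups of classical groups, precisely because the classical theory of the quartic invariant breaks down in characteristics $2$ and $3$; the actual argument is that $G(\EE_7,K)$ is maximal in $\Sp(56,K)$ and $\GSp(56,K)$ does not preserve $I$); and the characteristic-free bound $\dim\Lie(G_I(K))\le 134$, which you correctly single out as the crux but do not execute --- this is the paper's Theorem~\ref{thm:Lie_dimension}, proved by analyzing the entries $z_{\lambda\mu}$ according to $d(\lambda,\mu)=3,2,1,0$. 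Where you genuinely diverge is the concluding mechanism: you deduce that every fiber of $G_I$ is reduced (tangent space $=$ dimension $=134$) and then kill the ideal of the closed immersion by a torsion argument over $\Int$, whereas the paper invokes Waterhouse's isomorphism criterion (Lemma~\ref{lem:Waterhouse}), whose normalizer hypothesis is supplied by the self-normalizing property of $\overline G(\EE_7,K)$ in $\GL(56,K)$ (Lemma~\ref{lem:irreducible}); thus the paper's route does not formally need fiberwise surjectivity, while yours does not need the normalizer statement --- both are valid, and both inputs are proved in the paper. One caveat on your wording: smoothness of the fibers of $G_I$ does not by itself yield flatness, hence smoothness, of $G_I$ over $\Int$; fortunately your final step needs only flatness of $\overline G_{\sic}(\EE_7,-)$ (known, since the Chevalley--Demazure scheme is smooth over $\Int$) together with the fiberwise bijectivity, so the conclusion is unaffected, and flatness of $G_I$ then follows a posteriori from the isomorphism.
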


This result can be viewed as an explicit description of the
{\it extended\/} simply connected Chevalley--Demazure group 
scheme $\overline G_{\sic}(\Phi,-)$ of type $\Phi=\EE_7$, 
by equations. This scheme was constructed in~\cite{Berman_Moody}, 
see also \cite{Vavilov_weight, Vavilov_tor90, Vavilov_weightnew}
and~\S\ref{sec:extended_group} below.
For $\Phi=\EE_7$ the most straightforward way to visualize the
scheme $\overline G_{\sic}(\Phi,-)$ is to view it as the 
Levi factor of the parabolic subscheme of type $P_8$ in 
$G_{\sic}(\EE_8,-)$, where
$G_{\sic}(\Phi,-)$~--- is the usual simply connected 
Chevalley--Demazure groups scheme of type $\Phi$. We refer 
the reader to \cite{Matsumoto} as for the scheme-theoretic 
definition of parabolic subgroups and their Levi factors,
see also~\cite{Vavilov_weightnew} for the above identification
itself.

Our results are intimately related to the description of 
$G_{\sic}(\EE_7,R)$ as the stabilizer of a system of four-linear
forms on $V=V(\varpi_7)$. Namely, in~\cite{Luzgarev_e7_invariants}
we gave a new construction of a four-linear form 
$f\colon V\times V\times V\times V\to R$ and a symplectic form
$h\colon V\times V\to R$, invariant under the action
of the group $G_{\sic}(\EE_7,R)$. We reproduce the construction 
of the form $f$ in~\S\ref{sec:invariants}. The bulk of our
system of quadratic forms consists of the second partial
derivatives of the [regular part of] the form~$f$.

It turns out that $G_{\sic}(\EE_7,R)$ is precisely the group
of linear transformations preserving both $f$ and $h$:
\begin{align*}
G_{(f,h)}(R) = \{&g\in\GL(56,R)\mid f(gu,gv,gw,gz)=f(u,v,w,z),
\\
&h(gu,gv)=h(u,v)\text{ for all } u,v,w,z\in V\}.
\end{align*}
It is only marginally more complicated to describe the 
extended group $\overline{G}_{\sic}(\EE_7,R)$ in terms of
the forms $f$ and $h$. Namely, let
\begin{align*}
\overline{G}_{(f,h)}(R) &= \{g\in\GL(56,R)\mid\text{there exist
  $\varepsilon,\varepsilon'\in R^*$, $c_2,c_3,c_4\in R$}
\\
&\qquad \text{such that }f(gu,gv,gw,gz)=\varepsilon f(u,v,w,z)
\\
&\qquad\qquad + c_2 h(u,v)h(w,z) + c_3 h(u,w)h(v,z) + c_4 h(u,z)h(v,w)
\\
&\qquad \text{and }h(gu,gv)=\varepsilon'h(u,v)\text{ for all }
u,v,w,z\in V\}.
\end{align*}

\begin{theorem}\label{thm:g_fh}
There are isomorphisms
$G_{(f,h)}\cong G_{\sic}(\EE_7,-)$,
$\overline G_{(f,h)}\cong\overline G_{\sic}(\EE_7,-)$
of affine groups schemes over $\Int$.
\end{theorem}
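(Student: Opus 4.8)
The plan is to prove Theorem~\ref{thm:g_fh} as a group scheme statement over $\Int$, establishing both isomorphisms simultaneously by showing that the defining equations for $G_{(f,h)}$ and $\overline G_{(f,h)}$ cut out exactly the Chevalley group scheme and its extended version. I would organize the argument around three layers: first the inclusions $G_{\sic}(\EE_7,-)\subseteq G_{(f,h)}$ and $\overline G_{\sic}(\EE_7,-)\subseteq\overline G_{(f,h)}$, which are essentially invariance statements; then the reverse inclusions, which are the substantive content; and finally a smoothness/flatness argument that lets one pass from geometric points to the scheme over $\Int$.

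For the easy inclusions, I would invoke the invariance of the four-linear form $f$ and the symplectic form $h$ under $G_{\sic}(\EE_7,R)$, which is exactly the content recalled from~\cite{Luzgarev_e7_invariants} and reproduced in~\S\ref{sec:invariants}. This immediately gives $G_{\sic}(\EE_7,R)\subseteq G_{(f,h)}(R)$ over every commutative ring $R$. For the extended group $\overline G_{\sic}(\EE_7,-)$, one must additionally understand how the extra torus elements (the generators of the center of $\GL(56)$ quotient realized inside the extended group) rescale $f$ and $h$: a central scaling by $\lambda$ multiplies $h$ by $\lambda^2$ and $f$ by $\lambda^4$, which is why the relation for $\overline G_{(f,h)}$ must allow $f$ to be rescaled by $\eps$ together with the three correction terms built from products of $h$. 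Exhibiting that the generators of $\overline G_{\sic}(\EE_7,-)/G_{\sic}(\EE_7,-)$ produce precisely correction terms of the form $c_2 h(u,v)h(w,z)+c_3 h(u,w)h(v,z)+c_4 h(u,z)h(v,w)$ pins down the shape of the defining equation.

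The hard part will be the reverse inclusions, and I expect the main obstacle to lie in showing $\overline G_{(f,h)}(\bar k)\subseteq\overline G_{\sic}(\EE_7,\bar k)$ over an algebraically closed field $\bar k$. My plan here is to reduce $\overline G_{(f,h)}$ to $G_{(f,h)}$: given $g\in\overline G_{(f,h)}(\bar k)$ with multipliers $\eps,\eps',c_2,c_3,c_4$, I would rescale $g$ by a suitable central element and correct by the torus part so as to normalize both multipliers to $1$ and kill the $c_i$, landing in $G_{(f,h)}$; the admissibility of this correction is exactly the content of the extended-group structure. For the core statement $G_{(f,h)}(\bar k)=G_{\sic}(\EE_7,\bar k)$ over a field, I would use the classical representation-theoretic identification: the stabilizer of the invariant quartic $f$ together with the symplectic form $h$ on the minimal $56$-dimensional module is a reductive group containing $G_{\sic}(\EE_7)$, and a dimension count together with irreducibility of the action and connectedness forces equality. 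One subtlety to handle carefully is fields of small characteristic, where the invariant-theoretic arguments and the passage between $f$ and its partial derivatives can degenerate; here I would either argue directly on generators or fall back on the relationship with $\Fix_R(I)$ and Theorem~\ref{thm:g_i}.

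Finally, to upgrade from geometric points to a statement over $\Int$, the plan is to observe that $G_{\sic}(\EE_7,-)$ and $\overline G_{\sic}(\EE_7,-)$ are smooth affine group schemes over $\Int$, and to prove that $G_{(f,h)}$ and $\overline G_{(f,h)}$ are likewise smooth over $\Int$ with geometric fibers of the correct dimension; this is most cleanly done by borrowing the smoothness of the scheme $G_I$ established for Theorem~\ref{thm:g_i}, since the defining quadrics generating $I$ are exactly the second partial derivatives of $f$. Once smoothness and fiberwise equality of reduced geometric points are in hand, a closed immersion of smooth $\Int$-group schemes that is an isomorphism on all geometric fibers is an isomorphism, which yields the two claimed isomorphisms of affine group schemes over $\Int$ and completes the proof.
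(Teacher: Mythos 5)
Your three-layer architecture (easy inclusions, reverse inclusion over an algebraically closed field, ascent to $\Int$) matches the paper's in outline, but the step on which everything hinges --- smoothness of $G_{(f,h)}$ and $\overline G_{(f,h)}$ over $\Int$ --- is exactly where your argument has a genuine gap. You propose to ``borrow'' it from the smoothness of $G_I$. That cannot work: smoothness does not pass to closed subgroup schemes (think of $\mu_p\subset\mathbb G_m$), and $G_{(f,h)}$ cannot even be open-and-closed in $G_I$, since the group it must turn out to be has dimension $133$ while $\dim G_I=134$; moreover the scheme-theoretic inclusion $G_{(f,h)}\subseteq G_I$ is itself not immediate (the generators of $I$ are not literally the second partials of $f$: there are also the polynomials $g_\alpha$, and the ``regular part'' caveat matters integrally and in characteristic~$2$). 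The paper replaces this step by Theorem~\ref{thm:Lie_dimension_fh}, a direct tangent-space computation with non-degenerate quadruples of weights, bounding $\dim\Lie(\overline G_{(f,h)}(K))\le 134$ and, crucially, $\dim\Lie(G_{(f,h)}(K))\le 133$; the drop from $134$ to $133$ requires extracting one extra linear relation among the diagonal entries from the symplectic condition $h(xu,v)+h(u,xv)=0$, and nothing in your plan produces it. With those bounds, faithfulness on $K$- and $K[\delta]$-points, and the self-normalization of Lemma~\ref{lem:irreducible}, the paper applies Waterhouse's Lemma~\ref{lem:Waterhouse} and gets the isomorphism over $\Int$ without knowing smoothness of the target in advance (it comes out a posteriori). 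Your closing criterion --- a closed immersion of smooth $\Int$-group schemes that is an isomorphism on geometric fibers is an isomorphism --- is fine as a statement, but equality of geometric points alone cannot exclude infinitesimal non-reducedness in the fibers of $G_{(f,h)}$; that is precisely what the Lie algebra bound excludes, so you have reduced the theorem to its hardest ingredient and then assumed it.

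A second, smaller gap: in the field case you pass from $\overline G_{(f,h)}(\bar k)$ to $G_{(f,h)}(\bar k)$ by rescaling $g$ ``so as to normalize both multipliers to $1$ and kill the $c_i$''. Composing $g$ with a scalar $\lambda$, or with $h_{\fo7}(\eta)$, multiplies the entire right-hand side $\eps f+c_2\,h(u,v)h(w,z)+c_3\,h(u,w)h(v,z)+c_4\,h(u,z)h(v,w)$ by one common factor; it rescales the $c_i$ but can never make a nonzero $c_i$ vanish. The vanishing of the $c_i$ and the relation $\eps=(\eps')^2$ are Lemma~\ref{lemma:Petrov}, which the paper proves \emph{after} Theorem~\ref{thm:g_fh} as a corollary of it (a character and $1$-cocycle argument on $\overline G(\EE_7,-)$), so it is not available as an input. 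Relatedly, the extended-torus generators $h_{\fo7}(\eta)$ do not ``produce'' correction terms: they act as pure similitudes with $\eps=\eta^2$, $\eps'=\eta$ and all $c_i=0$; the $c_i$ are admitted in the definition only for a priori generality. The paper's field case (Lemma~\ref{lemma:g_fg_field}) therefore works with $\overline G_{(f,h)}$ as defined, via Seitz~\cite{Seitz_maximal_classical}: any connected closed subgroup of $\GL(56,K)$ properly containing $\overline G(\EE_7,K)$ must contain $\GSp(56,K)$, which visibly fails the defining conditions, and self-normalization disposes of the remaining components. Your Seitz-flavoured maximality sketch is in this spirit and is repairable; the two gaps above are the ones that must be fixed.
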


This theorem readily implies that the above definition of
the extended group $\overline{G}_{\sic}(\EE_7,R)$ can be
simplified as follows. Namely, Lemma~\ref{lemma:Petrov} 
asserts that
\begin{align*}
\overline{G}_{(f,h)}(R) &= \{g\in\GL(56,R)\mid\text{there exists
an $\eps\in R^*$ such that}
\\
&\qquad f(gu,gv,gw,gz)=\eps f(u,v,w,z)
\\
&\qquad \text{and }h(gu,gv)=\eps^2h(u,v)\text{ for all }
u,v,w,z\in V\}.
\end{align*}

Now, let $E,F$ be two subgroups of a group $G$. Recall that
the {\bf transporter} of the subgroup $E$ to the subgroup 
$F$ is the set
$$ 
\Tran_G(E,F)=\{g\in G\mid E^g\le F\}. 
$$
Actually, we mostly use this notation in the case where $E\le F$,
and then
$$ 
\Tran_G(E,F)=\{g\in G\mid [g,E]\le F\}. 
$$
In the sequel, we only work with the simply connected groups
and omit the subscript in the notation $G_{\sic}(\Phi,R)$. 
By $E(\Phi,R)\le G(\Phi,R)$ we denote the elementary Chevalley
group. Now we are all set to state the main result of the
present paper. Observe, that all normalizers and transporters
here are taken in the general linear group $\GL(56,R)$.

\begin{theorem}\label{thm:normaliser}
Let $R$ be an arbitrary commutative ring. Then
$$ 
N(E(\EE_7,R))=N(G(\EE_7,R))=\Tran(E(\EE_7,R),G(\EE_7,R))=G_I(R).
$$
\end{theorem}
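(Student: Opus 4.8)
The plan is to establish the chain of four equalities by proving a cyclic sequence of inclusions, using Theorems~\ref{thm:g_i} and~\ref{thm:g_fh} as the anchor. The key observation is that the outermost two objects are already understood: by Theorem~\ref{thm:g_i} we have $G_I(R)=\overline G(\EE_7,R)$, so it suffices to prove the set of inclusions
$$
\overline G(\EE_7,R)\le N(E(\EE_7,R))\le N(G(\EE_7,R))\le\Tran(E(\EE_7,R),G(\EE_7,R))\le\overline G(\EE_7,R).
$$
First I would verify the leftmost inclusion: that $\overline G(\EE_7,R)$ normalizes $E(\EE_7,R)$. This is essentially formal, since the extended group is generated by $G(\EE_7,R)$ together with the torus that rescales the forms, and conjugation by a diagonal torus element sends an elementary root unipotent $x_\alpha(\xi)$ to $x_\alpha(t^\alpha\xi)$, which again lies in $E(\EE_7,R)$. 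The second inclusion $N(E)\le N(G)$ is the crucial reduction step: one must show that every $g$ normalizing the \emph{elementary} group already normalizes the \emph{full} Chevalley group. I expect to argue this by showing that $G(\EE_7,R)$ is recovered from $E(\EE_7,R)$ in a conjugation-invariant way — for instance as the stabilizer of the pair of forms $(f,h)$ via Theorem~\ref{thm:g_fh}, or by using that $E(\EE_7,-)$ is a normal subscheme whose normalizer-scheme coincides with that of $G(\EE_7,-)$ — so that the normalizers must agree.

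The third inclusion $N(G)\le\Tran(E,G)$ is immediate from the definitions, since if $g$ normalizes $G(\EE_7,R)$ then in particular $E(\EE_7,R)^g\le G(\EE_7,R)^g=G(\EE_7,R)$, so $g$ lies in the transporter. The real content, and the main obstacle, is the last inclusion $\Tran(E(\EE_7,R),G(\EE_7,R))\le\overline G(\EE_7,R)$: one must show that any $g\in\GL(56,R)$ conjugating all of $E(\EE_7,R)$ into $G(\EE_7,R)$ necessarily rescales the invariant forms and hence lies in the extended group. Here the plan is to use the invariant-theoretic description from Theorem~\ref{thm:g_fh}. Since $g^{-1}x_\alpha(\xi)g\in G(\EE_7,R)$ for every root $\alpha$ and every $\xi\in R$, the element $g$ conjugates the root-subgroup data in a controlled way; exploiting that $G(\EE_7,R)$ preserves $(f,h)$ exactly, one transfers this to the statement that $g$ transforms $f$ and $h$ by scalars in the pattern defining $\overline G_{(f,h)}(R)$, which by Theorem~\ref{thm:g_fh} equals $\overline G(\EE_7,R)$.

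The technically delicate part of that final inclusion, and where I expect the heaviest work, is passing from the condition ``$g$ transports the generators of $E$ into $G$'' to ``$g$ rescales the forms,'' because the condition is only imposed on a generating set and must be promoted to a statement about the whole geometric action of $g$ on $V=V(\varpi_7)$. The natural tool is the \emph{decomposition of unipotents} technique, which expresses a fixed elementary root element as a product of factors each conjugated from $E(\EE_7,R)$; applying $g$ and using the transporter hypothesis forces $g$ to commute with the invariant tensors up to scalars. I would first treat the case of a field (or local ring) to pin down the scaling factors $\eps,\eps'$ and verify the simplified relation $h(gu,gv)=\eps^2 h(u,v)$ from Lemma~\ref{lemma:Petrov}, and then deduce the general commutative-ring case from smoothness of the scheme $G_I$ over $\Int$ already established in proving Theorem~\ref{thm:g_i}, which permits a reduction to local rings and ultimately to fields. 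Closing the cycle then yields equality throughout, completing the proof.
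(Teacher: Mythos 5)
Your skeleton is essentially the paper's: everything is reduced to the single hard inclusion $\Tran(E(\EE_7,R),G(\EE_7,R))\le\overline G(\EE_7,R)$, the other inclusions being formal, and the conclusion is reached via Theorem~\ref{thm:g_fh} and Theorem~\ref{thm:g_i}. Two structural remarks before the main point. First, the paper never needs your intermediate step $N(E)\le N(G)$, which you call ``the crucial reduction step'': since $E\le G$, \emph{both} normalizers lie in the transporter for trivial reasons, and the cycle closes without ever comparing the two normalizers directly; proving $N(E)\le N(G)$ head-on would in fact require the same machinery as the transporter inclusion, so it is a detour. Second, the inclusion $\overline G(\EE_7,R)\le N(E(\EE_7,R))$ is not just the torus computation you describe: one also needs conjugation by $G(\EE_7,R)$ itself to preserve $E(\EE_7,R)$, i.e.\ normality of the elementary subgroup in the (extended) Chevalley group over an arbitrary commutative ring, which is a nontrivial theorem that the paper quotes from~\cite{Hazrat_Vavilov}.

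The genuine gap is in your execution of the transporter inclusion. The paper's argument is a one-step pull-back valid over any commutative ring: if $a=gx_\alpha(\xi)g^{-1}\in G(\EE_7,R)$, then $a$ preserves $f$ and $h$ exactly, so the four-linear form $F(u,v,w,z)=f(gu,gv,gw,gz)$ satisfies $F(x_\alpha(\xi)u,\dots)=F(u,\dots)$ for all $\alpha,\xi$, i.e.\ $F$ is invariant under the abstract group $E(\EE_7,R)$. The key external input — Theorem~2 of~\cite{Luzgarev_e7_invariants}, which classifies \emph{all} $E(\EE_7,R)$-invariant four-linear forms over an arbitrary commutative ring as combinations $\eps f + c_2\,h(u,v)h(w,z)+c_3\,h(u,w)h(v,z)+c_4\,h(u,z)h(v,w)$ — then forces $g$ into $\overline G_{(f,h)}(R)$ (invertibility of $\eps$ comes from applying the same to $g^{-1}$), and Theorem~\ref{thm:g_fh} finishes. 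You never identify this classification result; in its place you propose decomposition of unipotents plus a reduction to local rings and fields via smoothness of $G_I$. That reduction does not work: the transporter hypothesis is a family of polynomial identities in $\xi$ that are assumed to hold only for $\xi\in R$, and such identities need not persist when $\xi$ ranges over an $R$-algebra — neither over a localization $R_{\mathfrak m}$ (where $\xi/s$ is not in the image of $R$) nor over a field extension (e.g.\ $\xi^p-\xi$ vanishes identically on $\mathbb F_p$ but not on $\overline{\mathbb F}_p$). So you cannot conclude that the localized $g$ lies in the localized transporter, and the field case, even if settled, does not descend back. Smoothness of $G_I$ is what proves Theorems~\ref{thm:g_i} and~\ref{thm:g_fh} (which you are entitled to quote), but it plays no role in, and cannot repair, this step; likewise decomposition of unipotents is not the mechanism here. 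Without the invariant-form classification over arbitrary $R$ (or an equivalent substitute), the passage from ``$E^g\le G$'' to ``$g$ rescales $f$ and $h$'' remains unproven.
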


The interrelation of Theorems~\ref{thm:g_i} and~\ref{thm:normaliser}
and the general scheme of their proof are exactly the same, as in 
our previous paper\cite{Vavilov_Luzgarev_norme6}, and some
familiarity with \cite{Vavilov_Luzgarev_norme6} (at least with the 
introduction and~\S5) would be {\it extremely\/} useful to
facilitate understanding the proofs in the present paper. 

Observe, that after the publication of
\cite{Vavilov_Luzgarev_norme6} its subject matter became
{\it unexpectedly\/} pertinent. Namely, recently Elena Bunina 
reconsidered one of the central classical problems of the whole
theory, description of [abstract] automorphisms of Chevalley
groups, without any such simplifying assumptions as $R$ being
Noetherian, or $2$ being invertible in $R$. For local rings
she {\it almost\/} succeeded in proving that all automorphisms
of the group $E(\Phi,R)$ are standard, see~\cite{Bunina1}, etc. 
Namely, she established that an arbitrary automorphism of the 
adjoint elementary Chevalley group is the product of 
ring, inner and graph automorphisms. There is a catch, though,
that with her approach the inner automorphisms are taken not
in the adjoint Chevalley group $G_{\ad}(\Phi,R)$ itself, but rather
in the corresponding general linear group ${\mathrm{GL}}(n,R)$. 
In this context, the fact that the abstract and algebraic normalizers
coincide, means precisely that all such conjugations are genuine
inner automorphisms. 

This means that modulo the results of \cite{Bunina1} an analogue
of the results of \cite{Vavilov_Luzgarev_norme6} and the present
paper, for adjoint {\it representations\/} would then imply 
that all automorphisms of Chevalley groups of types $\EE_l$ 
over local rings --- and thus also arbitrary commutative rings
--- are standard in the usual sense. We are convinced that 
our results on the equations in adjoint representations
\cite{Luzgarev2014, Luzgarev_Petrov_Vavilov} allow to
obtain the requisite results for the adjoint case. In cooperation
with Elena Bunina, we hope to work out the details shortly.

The paper is organized as follows. In \S2 we recall the basic
notation pertaining to the extended Chevalley group of type
$\EE_7$. In~\S3 we discuss the invariant four-linear forms, and
in~\S4 we construct an invariant system of quadrics, which
in this case is {\it significantly\/} trickier than in the case 
of $\EE_6$. In~\S5 we prove that this system of quadrics is 
indeed invariant. The technical core of the paper are \S\S6--10, 
which are directly devoted to the proof of Theorems~\ref{thm:g_i},
\ref{thm:g_fh} and~\ref{thm:normaliser}. Due to the limited space, 
we do not explicitly list the resulting equations here, this 
will be done in a subsequent publication.

%%%%%%%%%%%%%%%%%%%%%%%%%%%%%%%%%%%%%%%%%%%%%%%%%%%%%%%%%%%%%%%%%%%%%%%%

\section{Extended Chevalley group of type $\EE_7$}\label{sec:extended_group}

We refer the reader to \cite{Vavilov_Luzgarev_norme6} as for the
general context of the present paper, and further references. 
In the papers \cite{Matsumoto, Stein_generators, Vavilov_structure,
Vavilov_third_look, Vavilov_Plotkin} one can find many further
details pertaining to Chevalley groups over rings, and many further
related references.

Nevertheless, to fix the requisite notation, for reader's
convenience below we reproduce with minor modifications~\S1 
of~\cite{Vavilov_Luzgarev_norme6}.

Let $\Phi$ be a reduced irreducible root system of rank $l$ 
(in the main body of the paper we assume that $\Phi=\EE_7$), 
and $P$ be a lattice intermediate between the root lattice $Q(\Phi)$ 
and the weight lattice $P(\Phi)$. We fix and order on $\Phi$
and denote by $\Pi=\{\alpha_1,\ldots,\alpha_l\}$,
$\Phi^+$ and $\Phi^-$ the corresponding sets of fundamental, positive, 
and negative roots. Our numbering of the fundamental roots 
follows~\cite{Bourbaki46}. By $\delta$ we denote the maximal root 
of the system $\Phi$ with respect to this order. For instance, 
for $\Phi=\EE_7$ we have $\delta=\dynkin2234321{}$. Denote by
$P(\Phi)_{++}$ the set of dominant weights with respect to this 
order. Recall that it consists of all nonnegative integral linear 
combinations of the fundamental weights $\fo{1},\ldots,\fo{l}$,
for this order. Finally, $W=W(\Phi)$ denotes the Weyl group
of the root system $\Phi$.

Further, let $R$ be a commutative ring with 1. It is classically known
that, starting with this data, one can construct the
{\it Chevalley group\/} $G_P(\Phi,R)$, which is the group of $R$-points
of an affine group scheme $G_P(\Phi,{-})$, known as the 
{\it Chevalley--Demazure scheme\/}. For the problems we consider, 
it suffices to limit ourselves with the simply connected
(alias, universal) groups, for which $P=P(\Phi)$. For the simply 
connected groups we usually omit any reference to the lattice $P$ 
and simply write $G(\Phi,R)$ or, when we wish to stress that the 
group in question is simply connected, $G_{\sic}(\Phi,R)$. The
adjoint group, for which $P=Q(\Phi)$, is denoted by $G_{\ad}(\Phi,R)$.

Fix a split maximal torus $T(\Phi,R)$ in $G(\Phi,R)$ and a parametrization 
of the unipotent root subgroups $X_{\alpha}$, $\alpha\in\Phi$, elementary
with respect to this torus. Let $x_{\alpha}(\xi)$ be the elementary
unipotent element corresponding to  $\alpha\in\Phi$ and $\xi\in R$ in
this parametrization. The group $X_{\alpha}=\{x_{\alpha}(\xi),\xi\in R\}$ 
is called an (elementary) {\it root subgroup\/}, and the group
$E(\Phi,R)=\langle X_{\alpha},\alpha\in\Phi\rangle$ generated by all 
elementary root subgroups is called the (absolute) {\it elementary
subgroup\/} of the Chevalley group $G(\Phi,R)$.

As a matter of fact, apart from the usual Chevalley group, we also
consider the corresponding {\it extended\/} Chevalley group 
$\overline G(\Phi,R)$, which plays the same role with respect to 
$G(\Phi,R)$ as the general linear group $\GL(n,R)$ plays with 
respect to the special linear group $\SL(n,R)$. {\it Adjoint\/} 
extended groups were constructed in the original paper by Chevalley
\cite{Chevalleyrus}. It is somewhat harder to construct
{\it simply connected\/} extended groups because, unlike the adjoint 
case, here one must increase the dimension of the maximal torus. 
A unified elementary construction was only proposed by Berman and
Moody in \cite{Berman_Moody}. However, for the case of
$\overline G_{\sic}(\EE_7,R)$ that we consider in the present 
paper, this group can be naturally viewed as a subgroup of the
usual Chevalley group $G_{\sic}(\EE_8,R)$, viz.
$$ 
\overline G_{\sic}(\EE_7,R)=G_{\sic}(\EE_7,R)\cdot T_{\sic}(\EE_8,R). 
$$

In the majority of the existing constructions, the Chevalley group 
$G(\Phi,R)$ arises together with an action on the {\it Weyl
module\/} $V=V(\omega)$, for some dominant weight $\omega$. Denote 
by $\Lambda=\Lambda(\omega)$ the {\it multiset\/} of weights of
the module $V=V(\omega)$ {\it with multiplicities\/}. In the present
paper we consider the group $G(\EE_7,R)$ in the minimal representation
with the highest weight $\varpi_7$. This is a microweight 
representation, in particular the multiplicities of all weights are 
equal to 1. Fix an admissible base $v^{\lambda}$, $\lambda\in\Lambda$, 
of the module $V$. We conceive a vector $a\in V$, 
$a=\sum v^{\lambda}a_{\lambda}$, as a coordinate {\it column\/} 
$a=(a_{\lambda})$, $\lambda\in\Lambda$.

\begin{figure}[h]
\begin{center}
\includegraphics{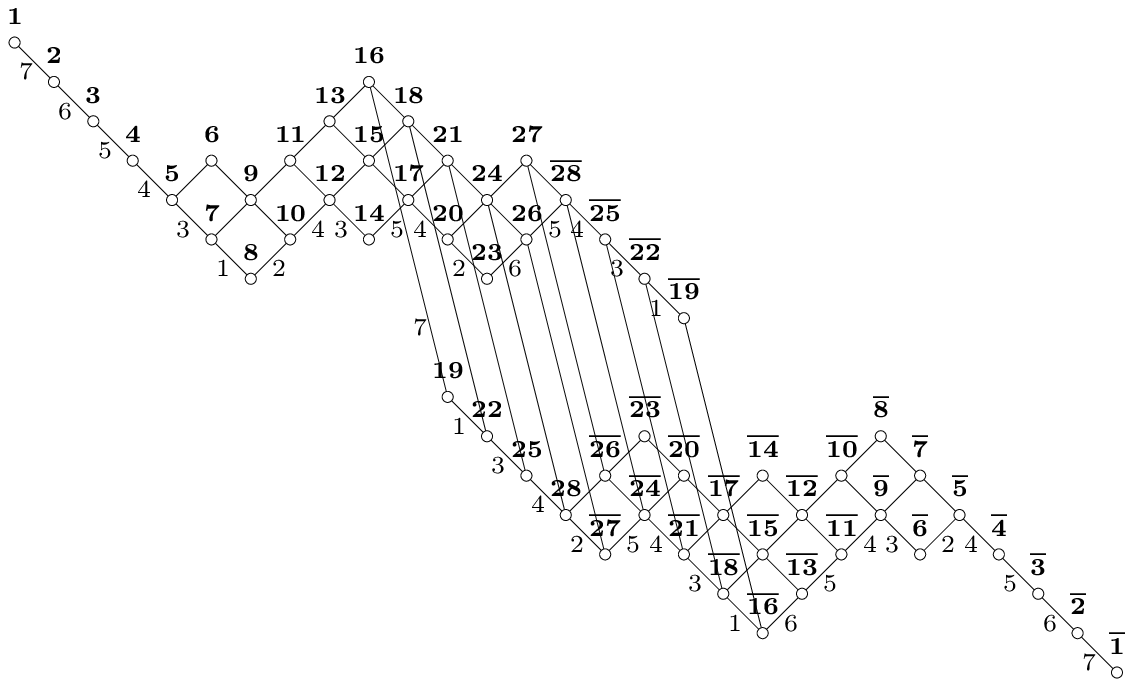}
\caption{Weight diagram $(\EE_7,\varpi_7)$: natural numbering.}
\end{center}
\end{figure}

In Figure~1 we reproduce the weight diagram of the representation
$(\EE_7,\varpi_7)$, together with the {\it natural\/} numbering
of weights, used in the sequel. In this numbering the weights 
are listed according to the order determined by the fundamental
root system $\Pi$. On the picture, the highest weight is the 
left-most one. The weight diagram of the representation 
$(\EE_7,\varpi_7)$ is symmetric, and this symmetry is reflected
in the numbering, the weights are numbered as 
$1,2,\dots,28,-28,\dots,2,1$. Often, to save space we 
write $\overline{n}$ instead of $-n$. We refer the reader to
\cite{Vavilov_Luzgarev_miniE7} for lists of weight in the
Dynkin form, and in the hyperbolic form, as well as other common 
numberings. 

Recall, that in the weight diagram two weights are joined by an
edge if their difference is a {\it fundamental\/} root. The weight
graph is constructed similarly, only that now two weights are
joined by an edge provided their difference is a {\it positive\/} 
root. In the sequel we denote by $d(\lambda,\mu)$ the distance
between two weights $\lambda$ and $\mu$ in the weight graph.
In other words, $d(\lambda,\mu)=0$ if $\lambda=\mu$; $d(\lambda,\mu)=1$
if $\lambda-\mu\in\Phi$; $d(\lambda,\mu)=2$ if $\lambda\neq\mu$,
$\lambda-\mu\not\in\Phi$, $\lambda-\mu$ is the sum of two roots 
of $\Phi$; and finally, $d(\lambda,\mu)=3$ otherwise. 

The above realization of the representation  $(\EE_7,\varpi_7)$ 
as an internal Chevalley module inside the Chevalley group of
type $\EE_8$ provides a natural identification of the set of
weights $\Lambda$ with the set of roots of the root system 
$\EE_8$, in whose expansion with respect to the fundamental roots 
the root $\alpha_8$ occurs with the coefficient $1$. Obviously, 
the roots of the root system $\EE_7$ itself are identified with
those roots of $\EE_8$ in whose expansion $\alpha_8$ occurs with
the coefficient 0. There is a unique root of $\EE_8$, in whose
expansion $\alpha_8$ occurs with the coefficient $2$: it is the
maximal root $\delta = \dynkin23465432$. In the sequel we always 
view both the roots of $\EE_7$ and the wights of our representation
as the roots of $\EE_8$. We denote by $(\cdot,\cdot)$ the natural
inner product defined on the linear span of $\EE_8$. It is
convenient to normalize it in such a way that all roots have 
length $1$. Then for any $\alpha,\beta\in\EE_8$ the inner product
$(\alpha,\beta)$ can take values $0$, $\pm 1/2$ or $\pm 1$. 
With these conventions, the distance $d(\lambda,\mu)$ between the
weights $\lambda,\mu\in\Lambda$ equals
\begin{itemize}
\item $0$ if $(\lambda,\mu)=1$;
\item $1$ if $(\lambda,\mu)=1/2$;
\item $2$ if $(\lambda,\mu)=0$;
\item $3$ if $(\lambda,\mu)=-1/2$ (and then $\lambda+\mu=\delta$).
\end{itemize}
Thus, for any weight $\lambda\in\Lambda$ there exists a unique
weight at distance $3$; this is the weight $\delta-\lambda$, 
which will be denoted by $\overline\lambda$.

In \cite{Stein_stability, Plotkin_Semenov_Vavilov,
Vavilov_structure, Vavilov_third_look, Vavilov_znaki} 
one can find many further details as to how to recover,
from this diagram alone, the action of
root unipotents $x_{\alpha}(\xi)$, $w_{\alpha}(\eps)$, $h_{\alpha}(\eps)$, 
the signs of structure constants, the shape and signs of equations, 
etc. These and other such similar items are tabulated in 
\cite{Vavilov_Luzgarev_miniE7}. Formally, an explicit knowledge
of these things is not necessary to understand the proofs 
produced in the present paper. However, in reality all
calculations in~\S3, 4, 7--9, were performed with the 
heavy use of weight diagrams, and could hardly be possible 
without them.

Over a field, and in general over a semilocal ring, the extended
Chevalley group $\overline G(\EE_7,R)$ is generated by the usual 
Chevalley group $G(\EE_7,R)$ and the weight elements
$h_{\varpi_7}(\eta)$, $\eta\in R^*$. In the natural numbering
of weight the element $h_{\varpi_7}(\eta^{-1})$ acts on the module
$V(\varpi_7)$ as follows:
\begin{align*}
&\diag(\eta^{-1},1,1,1,1,1,1,1,1,1,1,1,1,1,1,1,1,1,\eta,1,1,\eta,1,1,\eta,1,1,\eta,\\
&\quad\qquad 1,\eta,\eta,1,\eta,\eta,1,\eta,\eta,1,\eta,\eta,\eta,\eta,\eta,\eta,
\eta,\eta,\eta,\eta,\eta,\eta,\eta,\eta,\eta,\eta,\eta,\eta^2).
\end{align*}
Here we assume that the weights are linearly ordered as follows:
$1,\dots,28,\overline{28},\dots,\overline{1}$). Observe, that 
the exponent of $\eta$ increases by 1 each time we cross an
edge marked $\alpha_7$.

%%%%%%%%%%%%%%%%%%%%%%%%%%%%%%%%%%%%%%%%%%%%%%%%%%%%%%%%%%%%%%%%%%%%%%

\section{The invariants of degree 4}\label{sec:invariants}

In our paper \cite{Vavilov_Luzgarev_norme6} the simply connected
Chevalley group of type $\EE_6$ acting on the $27$-dimensional
module $V=V(\fo1)$ was identified with the isometry group 
of a three-linear form $T\colon V\times V\times V\longrightarrow R$.
There is a similar, but much more complicated description of
the simply connected Chevalley group of type $\EE_7$ acting on the
$56$-dimensional module $V=V(\fo7)$. In this case, to determine
the group one needs to invariants, one of degree 2, and another
one of degree 4. First of all, the module $V$ is self-dual and
carries a unimodular symplectic form $h$. Further, there exists a
four-linear form $f:V\times V\times V\times V\longrightarrow R$ 
such that $G$ can be identified with the full isometry group of
the pair $h$, $f$, in other words, with the group of all
$g\in\GL(V)$ such that $h(gu,gv)=h(u,v)$ and
$f(gu,gv,gw,gz)=f(u,v,w,z)$ for all $u,v,w,z\in V$.
The similarities of this pair of forms define the {\it extended\/} 
Chevalley group $\overline G(\EE_7,R)$ (see Theorem~\ref{thm:g_fh}).

It is obvious how to construct $h$. Construction of the fourth
degree invariant is considerably more complicated, and classically
one constructs not the four-linear form $f$, but rather the 
corresponding quartic\footnote{This form of degree~4 first occurred
in a 1901 paper by L.~E.~Dickson in the context of the 
28 bitangents, and thus, of the Weyl group $W(\EE_7)$. Apparently,
Dickson has not noticed an explicit connection with the group of
type $\EE_7$ itself. Otherwise, Chevalley groups could had been 
discovered some 50 years earlier!}. The fact that the group 
$G$ preserves a form of degree 4 in 56 variables, was first 
observed by E.~Cartan, at least in characteristic~0, but his
explicit construction of this form was flawed (probably, it was 
just a misprint). A very elegant construction of such an invariant
over a field $K$ of characteristic distinct from 2 was given by 
H.~Freudenthal. Namely, he identifies the module $V$ with the
space $A(8,K)^2$, where $A(8,K)$ is the set of antisymmetric 
$8\times 8$ matrices, and considers the following symplectic
inner product and form of degree 4:
\begin{align*}
h((a_1,b_1),(a_2,b_2)) &= \frac12(\tr(a_1b_2^t)- \tr(a_2b_1^t)), 
\\
Q((a,b))&=\ppf(a)+\ppf(b)-\frac14\tr((ab)^2)+\frac1{16}\tr(ab)^2.
\end{align*}
Now, for all characteristics distinct from 2, one can identify
the isometry group of this pair with the simply-connected
Chevalley group $G$ of type $\EE_7$ over $K$ 
(see \cite{Aschbacher_multi, Cooperstein_E7}). The constructions
of the above form in the papers by M.~Aschbacher and B.~Cooperstein
is somewhat different. Actually, in \cite{Aschbacher_multi} the
form is constructed in terms of $\rmA_6$ (the gist of this
construction is expressed by the partition $56=7+21+21+7$), 
whereas the construction in \cite{Cooperstein_E7} is closer to 
Freudenthal's original construction, and is phrased in terms of 
$\rmA_7$ (where $56=28+28$). The isometry group of the form
$Q$ is generated by $G$ and a diagonal element of order 2 
(see \cite{Cooperstein_E7}). 
There are no serious complications in characteristic $p\ge 5$,
whereas characteristic 3 requires some extra-care. 

However, in characteristic 2 this approach is almost immediately 
blocked by serious obstacles. Obviously, the above construction 
fails. Apparently, in characteristic 2 there are whatsoever no 
non-trivial symmetric $G$-invariant four-linear forms on $V$, 
(see~\cite{Aschbacher_multi}). This is related to the fact that
in characteristic 2 the four-linear form
$$
f_0(u,v,x,y)=h(u,v)h(x,y)+h(u,x)h(v,y)+h(u,y)h(v,x),
$$
obtained by the squaring of the symplectic form, becomes
symmetric, which is not the case in characteristics $\ge 3$. 
Actually, in characteristic 2 M.~Aschbacher \cite{Aschbacher_multi} 
constructs a four-linear $G$-invariant form $F$, which is 
symmetric with respect to the {\it even\/} permutations.

There are other constructions of the form $Q$, most notably a
construction by R.~Brown \cite{Brown_groups}, which works in
characteristics $\neq 2,3$. Let $V$ be a space with a non-degenerate
inner product. Then to define a three-linear form on $V$ is
essentially the same as to define on $V$ an algebra structure.
By the same token, to define a four-linear form on $V$ 
is essentially the same as to define on $V$ a {\it ternary\/} 
algebra structure. Indeed, there exists a remarkable ternary
algebra of dimension 56, constructed in terms of the exceptional
27-dimensional Jordan algebra $\mathbb J$ (see~\cite{Brown_groups, Faulkner_Ferrar} 
and references there). This algebra consists of $2\times 2$ matrices
over $\mathbb J$ with scalar diagonal entries, $56=1+27+27+1$.

The orbits of the group $G=G(\EE_7,K)$ on the 56-dimensional 
module are classified in \cite{Haris} in the absolute case, 
in \cite{Liebeck_Seitz_subgroup_structure_1998} for finite fields,
and in \cite{Cooperstein_E7} for arbitrary fields. Basically, these orbits
are described in terms of the four-linear form. Again characteristics
2 and 3 require separate analysis, and to take care of all details
one has to use the notion of 4-forms, introduced by M.~Aschbacher
(see~\cite{Aschbacher_multi, Cooperstein_E7}). Essentially, a
4-form is a form of degree 4 together with all of its 
polarizations. For simplicity, assume that $\Char K\neq 2,3$. Then
a vector $u\in V$ is called {\it singular\/}, if 
$F(u,u,x,y)=0$ for all $x,y\in V$; {\it brilliant\/},
if $F(u,u,u,x)=0$ for all $x\in V$; and {\it luminous\/} if 
$F(u,u,u,u)=0$. Otherwise, i.e.~if $F(u,u,u,u)\neq 0$, the vector
$u$ is called {\it dark\/}. The orbits of the group $G$ on $V$ 
are as follows: 0, non-zero singular vectors, non-singular brilliant
vectors, luminous vectors that are non brilliant, and, finally,
one or several orbits of dark vectors, parametrized by $K^*/K^{*2}$ 
(these last orbits fuse to one orbit under the action of the 
{\it extended\/} Chevalley group 
$\overline G$ of type $\EE_7$).

The feeling that $\EE_7$ stands in the same relation to $\EE_6$, 
as $\EE_6$ itself stands to $\DD_5$, suggests the following 
definition of the form of degree 4 on $V$. Take a base vector
$v^{\lambda}$. Then the vectors $v^{\mu}$,
$d(\lambda,\mu)=2$, generates a 27-dimensional module $U$, 
that supports the cubic form related to $\EE_6$. Let us define
{\it tetrads\/} as quadruples $(\lambda_1,\lambda_2,\lambda_3,\lambda_4)$ 
of pair-wise orthogonal weights. Let $\Theta$ and $\Theta_0$ be the
sets of ordered and unordered tetrads, respectively. Clearly, 
$|\Theta|=56\cdot 27\cdot 10$, whereas $|\Theta_0|=|\Theta|/24=630$. 
Now, we can tentatively define the form $Q_{\tent}$ of degree 4
by setting $Q_{\tent}(x)=\sum\pm 
x_{\lambda_1}x_{\lambda_2}x_{\lambda_3}x_{\lambda_4}$, where the sum
is taken over all $\{\lambda_1,\lambda_2,\lambda_3,\lambda_4\}\in\Theta_0$, 
while the signs are determined by the condition that the resulting form is 
invariant under the action of the extended Weyl group 
$\widetilde W$. Here, one should be slightly more cautious then in the
case of $\EE_6$, since now, in addition to the two possible cases
that occurred there, the following possibility occurs: $w_{\alpha}$ 
moves all 4 weights of a tetrad, two of them in positive and the other
two in negative direction, in which case the signs does not change. 
Nevertheless, an expression of the sign in terms of $h(\lambda_i,\mu_i)$ 
still works. This is essentially the same, as to define the four-linear
form $F_{\tent}$ by
$F_{\tent}(v^{\lambda_1},v^{\lambda_2},v^{\lambda_3},v^{\lambda_4})=
(-1)^{h(\lambda_1,\lambda_2,\lambda_3,\lambda_4)}$, for a tetrad
$(\lambda_1,\lambda_2,\lambda_3,\lambda_4)\in\Theta$ and by
$F_{\tent}(v^{\lambda_1},v^{\lambda_2},v^{\lambda_3},v^{\lambda_4})=0$
otherwise. By construction, this form is invariant under the action 
of $\widetilde W$, and we only have to verify that it is invariant
under the action of the root subgroup $X_{\alpha}$, for some root
$\alpha\in\Phi$. Unfortunately, this is not the case. Namely, for
any tetrad $(\lambda_1,\lambda_2,\lambda_3,\lambda_4)$ and any elementary
root unipotent $g=x_{\alpha}(\xi)$ the following formula holds
$$
F_{\tent}(gv^{\lambda_1},gv^{\lambda_2},gv^{\lambda_3},gv^{\lambda_4})=
F_{\tent}(v^{\lambda_1},v^{\lambda_2},v^{\lambda_3},v^{\lambda_4}).
$$
As it happens, though, there exist quadruples of weights that are not 
tetrads, for which the right hand side equals 0, whereas the left 
hand side is distinct from 0. For instance, take the four weights
$\lambda_1,\lambda_2,\lambda_3,\lambda_4$ such that 
$\lambda_1+\alpha,\lambda_2+\alpha,\break \lambda_3+\alpha,
\lambda_4-\alpha$ are weights, and together the 8 above weights
form a cube (in other words, the corresponding weight diagrams is
the tensor product of three copies $(\rmA_1,\fo1)$, see 
\cite{Cooperstein_E7, Plotkin_Semenov_Vavilov}). 
Then one of the weights $\lambda_1,\lambda_2,\lambda_3$ will be adjacent 
with the other two, say, $d(\lambda_1,\lambda_2)=d(\lambda_1,\lambda_3)=1$, 
so that
$F_{\tent}(v^{\lambda_1},v^{\lambda_2},v^{\lambda_3},v^{\lambda_4})=0$. 

At the same time, decomposing the expression 
$F_{\tent}(gv^{\lambda_1},gv^{\lambda_2},gv^{\lambda_3},gv^{\lambda_4})$ 
by linearity, we get 8 summands, of which exactly one, namely
$F_{\tent}(v^{\lambda_1+\alpha},v^{\lambda_2},v^{\lambda_3},v^{\lambda_4})$,
corresponds to a tetrad, and equals $\pm1$. Thus, the form 
$F_{\tent}$ is not preserved by the action of $X_{\alpha}$.

In itself, this is not yet critical, since one can hope to
save the situation by throwing in another Weyl orbit of monomials.
This is, however, exactly the point where real problems start. 
As a matter of fact, in the above counter-example throwing in 
another orbit of monomials will produce {\it two\/} non-zero
extra summands, so that the resulting correction will be a 
multiple of 2. This means that one cannot define an invariant 
form of degree 4 by setting its values on the tetrads to be 
equal to $\pm1$, one should start with $\pm2$ instead. This is 
precisely where {\it serious\/} trouble starts. In characteristic 
$\neq 2$ the above construction is {\it essentially\/} correct, 
in the sense that it tells how the {\it relevant\/} part of
an invariant form of degree 4 looks like, responsible for the
reduction to $\EE_6$. Let us fix a vector $v^{\lambda}$. Then
$F(v^{\lambda},{-},{-},{-})$ consists of two parts: the form
$F_{\tent}$, as defined above, and another part, introduced
for the resulting form to be $G$-invariant. This second part
has the form $F(v^{\lambda},v^{\lambda^*},{-},{-})$ and does not
say anything beyond the fact that our group preserves 
the usual symplectic form. 

In the works by Jacob Lurie \cite{Lurie} and the second author
\cite{Luzgarev_e7_invariants}, these difficulties arising in
characteristic 2 were sorted out in a systematic way, but the
resulting four-linear forms are not anymore symmetric. Namely, 
let $\mathfrak g$ be the Lie algebra of type $\EE_8$; recall that
$$
\delta = \dynkin23465432
$$
is the highest root of $\EE_8$. The coefficient with which 
$\alpha_8$ occurs in the expansion of a root $\alpha\in\EE_8$ 
with respect to the fundamental roots, is called the $\alpha_8$-height
of $\alpha$ and can only take values 
$-2$, $-1$, $0$, $1$, $2$. This defines the following length 5
grading of the algebra $\mathfrak g$:
$$
\mathfrak g = \mathfrak g_{-2}\oplus \mathfrak g_{-1}\oplus \mathfrak g_0
\oplus \mathfrak g_1 \oplus \mathfrak g_2.
$$
The 56-dimensional space $\mathfrak g_1$ has a base consisting of 
the elementary root elements $e_\alpha$, where $\alpha$ runs over
the roots of $\alpha_8$-height $1$, i.e.~the weights of $V(\varpi_7)$.
Let $\lambda,\mu,\nu,\rho$ be four weights of $V(\varpi_7)$.
Clearly, the element 
$$
[[[[e_{-\delta},e_\lambda],e_\mu],e_\nu],e_\rho]
$$
has $\alpha_8$-weight $2$, so that the resulting element is a multiple 
of $e_\delta$. Denote the corresponding scalar coefficient by 
$c(\lambda,\mu,\nu,\rho)$ and consider the four-linear form 
$$
f(u,v,w,z) = \sum_{\lambda,\mu,\nu,\rho\in\Lambda}c(\lambda,\mu,\nu,\rho)
u_\lambda v_\mu w_\nu z_\rho.
$$
Obviously, this form is invariant under the action of the group
$G(\EE_7,R)$ on the module $V(\varpi_7)$.

\subsection*{The orbit of the highest weight vector} It is well
known that in any representation of the group $G$ the orbit 
$Gv^+$ of the highest weight vector $v^+$ is an intersection of
quadrics~\cite{Lichtenstein}. Here, as a motivation for the next
section, we explicitly describe the equations defining the orbit
of $v^+$ for the microweight representation of 
$\EE_7$. For the microweight representation of $\EE_6$ this was
done in~{\rm [11]}. Of course, for these cases the corresponding 
equations were found by H.~Freudenthal and J.~Tits more than 40
years ago (see also \cite{Vavilov_Luzgarev_miniE7} and references
there), but again we wish to show how to recover the equations
directly from the weight diagram.

Let $\omega=\fo1$ for $\EE_6$ or $\omega=\fo7$ for
$\EE_7$, the case of $(\Phi,\omega)=(\EE_6,\fo6)$ is dual to the
first case. We use the same interpretation of the modules as 
in \S~1. In particular, 
$\Phi=\EE_l$, $l=6,7$, $\Delta=E_{l+1}$, and
$\Sigma=\Sigma_{l+1}(1)$. The group $G=G(\Phi,R)$ acts on
$V=U_{l+1}(1)/U_{l+1}(2)\cong\prod X_{\alpha}$,
$\alpha\in\Sigma$ by conjugation. Since we are only interested in
the equations satisfied by the orbit $Gv^+$, we can assume that
$R=K$ is an (algebraically closed) field\footnote{For rings, there
are further obstacles, related to the fact that the lower
$K$-functors, or their analogues, can be non-trivial, that we
do not discuss here.}.

In both cases one can take $v^{+}=v^{\rho}=x_{\rho}(1)$ as the
highest weight vector, where
$$
\rho=\dynkin2234321{}
\quad\text{or}\quad
\rho=\dynkin23465431
$$
is the maximal root of $\EE_7$, or the unique submaximal
root of $\EE_8$, respectively. Recall that the vector 
$a=(a_{\alpha})\in V$ is now viewed as the product
$x=\prod x_{\alpha}(a_\alpha)\in\prod X_{\alpha}$,
$\alpha\in\Sigma$. In the case of $\EE_7$ this product is
considered modulo $U_8(2)=X_{\rho+\alpha_8}$, the
root subgroup corresponding to the maximal root of
$\EE_8$.

%%%%%%%%%%%%%%%%%%%%%%%%%%%%%%%%%%%%%%%%%%%%%%%%%%%%%%%%%%%%%%%%

\section{Construction of the system of quadrics}\label{sec:construction}

The first set of quadrics defining the highest weight orbit, 
consists of {\em square equations}; for large classes of representations,
their construction and numerology were described by the first author in
\cite{Vavilov_numerology, Vavilov_more_numerology}.
Here, we recall some basic definitions of \cite{Vavilov_numerology} in
the context of the 56-dimensional representation of $(\EE_7,\varpi_7)$.

The set of weights $\Omega\subseteq\Lambda$ is called a 
{\it square\/} if $|\Omega|\geq 4$ and for all $\lambda\in\Omega$
its difference $\lambda-\mu$ with all weights $\mu\in\Omega$, except
exactly one, denoted by $\lambda^*$, is a root, whereas the
difference $\lambda-\lambda^*$ is not a root (and thus
$\lambda\perp\lambda^*$). A square maximal with respect to inclusion
is called a {\it maximal square\/}. In \cite{Vavilov_numerology} it
is proven that for our representation each maximal square $\Omega$
consists of $12$ weights and the sum $\lambda+\lambda^*$ does not
depend on the choice of $\lambda\in\Omega$. At that, a maximal square 
$\Omega$ is completely determined by this sum. Furthermore, in the
case of a microweight representation of $\EE_7$ maximal squares are
in bijective correspondence with the roots of $\EE_7$, namely, to a
root $\alpha\in\EE_7$ there corresponds the square
$$
\Omega(\alpha) = \{\lambda\in\Lambda\mid\lambda-\alpha\in\Lambda\}.
$$

Let, as above, $\Omega$ be some maximal square. Choose orthogonal
weights $\rho,\rho^*\in\Omega$ and define the polynomial
$f_{\rho,\rho^*}\in\Int[\{x_\lambda\}_{\lambda\in\Lambda}]$
by
$$
f_{\rho,\rho^*} = x_\rho x_{\rho^*} - \sum N_{\rho,-\lambda}
N_{\rho^*,-\lambda^*} x_\lambda x_{\lambda^*},
$$
where the sum is taken over all orthogonal pairs of weights
$\{\lambda,\lambda^*\}$, except $\{\rho,\rho^*\}$ itself.
The equation $f_{\rho,\rho^*}(v)=0$ on the components of
a vector $v =(v_\lambda)_{\lambda\in\Lambda}\in V$ was called
in \cite{Vavilov_numerology} a {\it square equation\/},
corresponding to the maximal square $\Omega$. In particular,
this equation depends only on the square $\Omega$ itself, and 
not on the arbitrary choice of a pair $\rho,\rho^*$ of orthogonal
weights: passing to another such pair, the polynomial 
$f_{\rho,\rho^*}$ is multiplied by $\pm 1$.

Fixing in each maximal square $\Omega$ one such pair of orthogonal
weights gives us $126$ polynomials, corresponding (up to sign)
to the $126$ maximal squares (or, what is the same, to the $126$ 
roots of $\EE_7$).

Finally, for a root $\alpha\in\EE_7$ we consider the polynomial 
$g_\alpha\in\Int[\{x_\lambda\}_{\lambda\in\Lambda}]$, defined by
$$
g_\alpha = \sum_{\lambda\in\Omega(\alpha)}N_{\lambda,\overline\lambda}x_\lambda 
x_{\overline\lambda}.
$$
Again, by definition these polynomials are in bijective correspondence 
with the roots of $\EE_7$ (i.e. there are $126$ of them), but the
following lemma asserts that it suffices to consider only $g_\alpha$
corresponding to $\alpha\in\Pi$.

\begin{lemma}
The ideal in $\Int[\{x_\lambda\}_{\lambda\in\Lambda}]$, generated by
the
polynomials $\{g_\alpha\}$, $\alpha\in\EE_7$, coincides with the
ideal, generated by the polynomials $\{g_\alpha\}_{\alpha\in\Pi}$.
\end{lemma}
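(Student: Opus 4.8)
The plan is to prove something considerably stronger than ideal membership: that the assignment $\alpha\mapsto g_\alpha$ is \emph{additive} on the root lattice $Q(\EE_7)$, so that for every root $\alpha$ the polynomial $g_\alpha$ is literally an integral linear combination of the $g_{\alpha_i}$, $\alpha_i\in\Pi$. This at once gives both inclusions: the $\Int$-span of $\{g_{\alpha_i}\}_{\alpha_i\in\Pi}$ already contains every $g_\alpha$, while the reverse inclusion is trivial since $\Pi\subseteq\EE_7$.

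The first step is to rewrite the square $\Omega(\alpha)$ in terms of the inner product. In the microweight representation each weight satisfies $\langle\lambda,\alpha^\vee\rangle\in\{-1,0,1\}$, and since all roots have length $1$ one has $\alpha^\vee=2\alpha$ and $\lambda-\alpha=s_\alpha(\lambda)$; hence $\Omega(\alpha)=\{\lambda\in\Lambda:(\lambda,\alpha)=1/2\}$, the value $(\lambda,\alpha)=0$ corresponding to the $32$ weights orthogonal to $\alpha$. The decisive ingredient is that $\delta\perp\EE_7$: since $W(\EE_7)$ preserves the $\alpha_8$-height it fixes the unique root $\delta$ of height $2$, so $\delta$ lies in the one-dimensional fixed space of $W(\EE_7)$, which is precisely the orthogonal complement of the $\EE_7$-root space. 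Consequently $(\overline\lambda,\alpha)=(\delta,\alpha)-(\lambda,\alpha)=-(\lambda,\alpha)$, so within each of the $28$ pairs $\{\lambda,\overline\lambda\}$ the two inner products with $\alpha$ are negatives of one another.

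The heart of the argument is to combine this pairing with the antisymmetry $N_{\overline\lambda,\lambda}=-N_{\lambda,\overline\lambda}$ of the structure constants. Fixing one representative $\lambda$ in each pair, the contribution of that pair to $g_\alpha$ (the single term indexed by whichever of $\lambda,\overline\lambda$ lies in $\Omega(\alpha)$, if any) equals $N_{\lambda,\overline\lambda}x_\lambda x_{\overline\lambda}$ when $(\lambda,\alpha)=1/2$, equals $-N_{\lambda,\overline\lambda}x_\lambda x_{\overline\lambda}$ when $(\lambda,\alpha)=-1/2$ (where the term is indexed by $\overline\lambda$ and one uses $N_{\overline\lambda,\lambda}=-N_{\lambda,\overline\lambda}$), and vanishes when $(\lambda,\alpha)=0$; in all three cases the coefficient is exactly $\langle\lambda,\alpha^\vee\rangle\,N_{\lambda,\overline\lambda}$. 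This yields the identity
\[
g_\alpha=\sum_{\{\lambda,\overline\lambda\}}\langle\lambda,\alpha^\vee\rangle\,N_{\lambda,\overline\lambda}\,x_\lambda x_{\overline\lambda},
\]
in which the only dependence on $\alpha$ sits in the linear factor $\langle\lambda,\alpha^\vee\rangle=2(\lambda,\alpha)$. Since this factor is additive in $\alpha$, so is the whole expression; expanding a root as $\alpha=\sum_i n_i\alpha_i$ with $n_i\in\Int$ then gives $g_\alpha=\sum_i n_i g_{\alpha_i}$, which completes the proof.

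I expect the main obstacle to be essentially bookkeeping: verifying that the three-case contribution collapses uniformly to $\langle\lambda,\alpha^\vee\rangle\,N_{\lambda,\overline\lambda}$, i.e.\ that the choice of representative in each pair and the attendant signs wash out (they do, since swapping $\lambda\leftrightarrow\overline\lambda$ negates both $\langle\lambda,\alpha^\vee\rangle$ and $N_{\lambda,\overline\lambda}$), and that the displayed formula is an equality of polynomials with integer coefficients, not merely up to sign. The genuinely substantive point, which makes the linearity work at all, is the orthogonality $\delta\perp\EE_7$; without it the pairing $\lambda\mapsto\overline\lambda$ would not convert the membership condition $\lambda\in\Omega(\alpha)$ into a linear functional of $\alpha$.
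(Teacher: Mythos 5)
Your proof is correct, and it takes a genuinely different---in fact stronger---route than the paper's. The paper proves two identities by direct case analysis: $g_{-\alpha}=-g_\alpha$, and $g_{\alpha+\beta}=g_\alpha+g_\beta$ whenever $\alpha,\beta,\alpha+\beta$ are all roots (the latter via a cancellation argument for those $\lambda\in\Omega(\alpha)$ with $(\lambda,\beta)=-1/2$); the lemma then follows by an induction, left implicit in the paper, on the height of a positive root, writing it as a chain of partial sums each of which is again a root. You instead exhibit the closed formula
\begin{equation*}
g_\alpha \;=\; \sum_{\{\lambda,\overline\lambda\}} 2(\lambda,\alpha)\, N_{\lambda,\overline\lambda}\, x_\lambda x_{\overline\lambda},
\end{equation*}
valid for every root $\alpha$, whose right-hand side is well defined (swapping $\lambda\leftrightarrow\overline\lambda$ negates both factors) and manifestly linear in $\alpha$ over the whole root lattice $Q(\EE_7)$; the expansion $g_\alpha=\sum_i n_i g_{\alpha_i}$ for $\alpha=\sum_i n_i\alpha_i$ is then immediate, with no induction and no need for intermediate sums to be roots. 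The ingredients are the same on both sides: the description $\Omega(\alpha)=\{\lambda\in\Lambda \mid (\lambda,\alpha)=1/2\}$, the orthogonality $\delta\perp\EE_7$ (whence $(\overline\lambda,\alpha)=-(\lambda,\alpha)$), and the antisymmetry $N_{\overline\lambda,\lambda}=-N_{\lambda,\overline\lambda}$; indeed your three-case computation of a pair's contribution contains the paper's proof of $g_{-\alpha}=-g_\alpha$ as the case $(\lambda,\alpha)=-1/2$. What your packaging buys is an explanation of \emph{why} the paper's cancellations occur, plus the stronger statement that $\alpha\mapsto g_\alpha$ is the restriction to roots of a $\Int$-linear map on $Q(\EE_7)$, so that each $g_\alpha$ lies in the $\Int$-span (not merely the ideal) generated by $g_{\alpha_1},\dots,g_{\alpha_7}$. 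A small bonus: you actually prove the orthogonality $\delta\perp\EE_7$ (via uniqueness of the root of $\alpha_8$-height $2$ and triviality of the $W(\EE_7)$-fixed space inside the span of $\EE_7$), which the paper simply asserts.
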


\begin{proof}
Let $\alpha\in\EE_7$.
Observe, that 
$\Omega(\alpha) = \{\lambda\in\Lambda\mid\lambda-\alpha\in\Lambda\}
= \{\lambda\in\Lambda\mid (\lambda,\alpha)=1/2\}$.
If $\lambda\in\Omega(\alpha)$,
then $(\overline\lambda,-\alpha)=(\delta-\lambda,-\alpha) = (\lambda,\alpha)-(\delta,\alpha) = (\lambda,\alpha)$,
since $\delta\perp\alpha$ for all $\alpha\in\EE_7$.
Thus, $\Omega(-\alpha) = \{\overline\lambda\mid\lambda\in\Omega(\alpha)\}$.
We get that 
\begin{equation*}
g_{-\alpha} = \sum_{\lambda\in\Omega(-\alpha)} N_{\lambda,\overline\lambda}x_\lambda x_{\overline\lambda}
= \sum_{\lambda\in\Omega(\alpha)} N_{\overline\lambda,\lambda}x_\lambda x_{\overline{\lambda}} = - g_\alpha.
\end{equation*}

Now, let $\alpha,\beta,\alpha+\beta\in\EE_7$. Let us show that $g_{\alpha+\beta} =
g_\alpha + g_\beta$.
Observe, that $(\lambda,\alpha+\beta) = (\lambda,\alpha)+(\lambda,\beta)$,
and each of these inner products equals $0$ or $\pm 1/2$.
If $\lambda\in \Omega(\alpha+\beta)$, i.e.\ $(\lambda,\alpha+\beta)=1/2$,
then one of the expressions $(\lambda,\alpha)$, $(\lambda,\beta)$ equals 
$1/2$, while the other one is 0. In this case the monomial
$N_{\lambda,\overline\lambda}x_\lambda x_{\overline{\lambda}}$ is contained
either in $g_\alpha$, or in $g_\beta$, but not in both. Conversely,
if $\lambda\in\Omega(\alpha)$ and at that $\lambda\notin\Omega(\alpha+\beta)$, 
then necessarily $(\lambda,\beta)=-1/2$, which implies that 
$(\overline\lambda,\beta)=1/2$. It follows that $g_\alpha$ contains the
monomial 
$N_{\lambda,\overline\lambda}x_\lambda x_{\overline\lambda}$, while 
$g_\beta$ contains the monomial
$N_{\overline\lambda,\lambda}x_\lambda x_{\overline\lambda}=
-N_{\lambda,\overline\lambda}x_\lambda x_{\overline\lambda}$, which cancel.
\end{proof}

Set $g_i=g_{\alpha_i}$ and let $I$ be the ideal generated by the above
quadratic polynomials 
$f_{\mu,\mu^*}$ and the polynomials
$g_i$, $i=1,\dots,7$ (altogether, this gives us $126+7=133$ polynomials).

\begin{theorem}\label{thm:e7_in_fix}
Denote by $\Fix_R(I)$ the set of $R$-linear transformations,
preserving the ideal $I${\rm:}
$$
\Fix_R(I) = \{g\in\GL(56,R)\mid f(gx)\in I\text{ for all }f\in I\}.
$$
Then the elementary Chevalley group $E(\EE_7,R)$ is contained in $\Fix_R(I)$.
\end{theorem}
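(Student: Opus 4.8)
The plan is to reduce the claim to a finite, purely combinatorial verification that each generator $x_\alpha(\xi)$ of $E(\EE_7,R)$ sends each of the $133$ generating polynomials of $I$ into $I$. Since $\Fix_R(I)$ is a group (the condition $f(gx)\in I$ for all $f\in I$ is stable under composition and inverses), and since $E(\EE_7,R)$ is generated by the root subgroups $X_\alpha$, $\alpha\in\EE_7$, it suffices to treat a single elementary unipotent $g=x_\alpha(\xi)$. Moreover, because $I$ is generated by the $f_{\mu,\mu^*}$ and the $g_i$, it is enough to check that $f_{\mu,\mu^*}(gx)$ and $g_i(gx)$ lie in $I$ for every maximal square $\Omega=\Omega(\mu)$ and every fundamental root $\alpha_i$. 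By the Lemma just proved, the ideal generated by $\{g_\alpha\}_{\alpha\in\Pi}$ already contains all $g_\alpha$, $\alpha\in\EE_7$, so throughout the computation I may freely use any $g_\beta$, $\beta\in\EE_7$, as an element of $I$, which gives the extra flexibility needed to absorb correction terms.

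First I would record the explicit action of $x_\alpha(\xi)$ on the coordinates $x_\lambda$. On the weight diagram, $x_\alpha(\xi)$ acts on a vector $v=(v_\lambda)$ by $v_\lambda\mapsto v_\lambda+\sum \xi N_{\alpha,\mu}\,v_\mu$, the sum over weights $\mu$ with $\mu+\alpha=\lambda$; dually, the induced substitution on the polynomial variables replaces $x_\lambda$ by $x_\lambda - \xi N_{\alpha,\lambda-\alpha}\,x_{\lambda-\alpha}+\dots$, where only weights at distance $1$ from $\lambda$ in the direction $\pm\alpha$ contribute, and the microweight property guarantees there are no higher-order terms in $\xi$ (each weight string in direction $\alpha$ has length at most $2$). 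Thus $f_{\mu,\mu^*}(gx)$ and $g_i(gx)$ are again quadratic polynomials, and $f_{\mu,\mu^*}(gx)-f_{\mu,\mu^*}$, respectively $g_i(gx)-g_i$, is a finite $\Int[\xi]$-combination of monomials $x_\lambda x_\nu$ whose weight indices I can read off from the diagram.

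The key step is to show these differences lie in $I$. For the square equations I expect that $f_{\mu,\mu^*}(gx)$ is, up to sign, either $f_{\mu',\mu'^*}$ for the shifted square $\Omega(\mu')$ (when $\alpha$ translates the whole square), or a $\xi$-linear combination of other $f$'s and of the $g_\beta$'s (when $\alpha$ splits the square, moving some weights forward and their partners backward). Here the defining identity $N_{\rho,-\lambda}N_{\rho^*,-\lambda^*}$ for the coefficients, together with the Chevalley commutator/structure-constant relations $N_{\alpha,\beta}N_{\gamma,\delta}=\pm N$, is exactly what makes the leftover terms reassemble into genuine generators rather than spurious monomials. The polynomials $g_\alpha$ behave better, transforming essentially by the additivity $g_{\alpha+\beta}=g_\alpha+g_\beta$ and antisymmetry $g_{-\alpha}=-g_\alpha$ established in the Lemma, so that $g_i(gx)$ differs from $g_i$ by a $\Int$-combination of $g_\beta$'s.

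The main obstacle I anticipate is precisely the phenomenon flagged in the informal discussion of the tentative form $F_{\tent}$ in~\S\ref{sec:invariants}: after applying $x_\alpha(\xi)$, the naive leftover in $f_{\mu,\mu^*}(gx)$ need not be a single square equation, but a combination whose coefficients are genuine structure constants $N_{\beta,\gamma}$, and controlling their signs coherently across all $126$ squares and all $\alpha\in\EE_7$ is the delicate part. The correct bookkeeping is forced by the invariance of the whole configuration under the extended Weyl group $\widetilde W$, which acts transitively enough on squares and roots to reduce the number of genuinely distinct cases to a handful; I would organize the verification by the distance $d(\cdot,\cdot)$ and by the relative position of $\alpha$ to the square $\Omega$, checking one representative per $\widetilde W$-orbit and invoking equivariance for the rest. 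Since all the data involved --- the weights, their distances, the structure constants $N_{\alpha,\beta}$, and the sign rules --- are tabulated in \cite{Vavilov_Luzgarev_miniE7} and determined by the weight diagram, the remaining work is a finite, diagram-driven sign check of the type carried out in \cite{Vavilov_Luzgarev_norme6} for $\EE_6$.
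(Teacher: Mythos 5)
Your proposal follows essentially the same route as the paper's proof: reduce to a single root unipotent $g=x_\gamma(\xi)$ acting on the $133$ generators, use the microweight coordinate action (Matsumoto's lemma, so the substitution is $x_\lambda\mapsto x_\lambda+\xi N_{\gamma,\lambda-\gamma}x_{\lambda-\gamma}$ with no higher terms), organize the verification by the relative position of $\gamma$ to the square $\Omega(\alpha)$ (the paper does this via the values $(\alpha,\gamma)\in\{0,\pm\tfrac12,\pm1\}$ rather than Weyl-orbit representatives), and use the cocycle identity for the structure constants to reassemble all correction terms into $\xi$-multiples of other generators of $I$. One minor mis-prediction, which does not affect the strategy since all generators lie in $I$: the corrections to $g_\alpha(gx)$ are not $\Int$-combinations of other $g_\beta$'s but $\xi$-multiples (for $\gamma=\alpha$, $2\xi$-multiples) of square equations $f_{\rho,\sigma}$, while conversely the case $\gamma=\alpha$ for $f_{\rho,\sigma}(gx)$ is exactly where a $\xi g_\alpha$ term appears.
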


%%%%%%%%%%%%%%%%%%%%%%%%%%%%%%%%%%%%%%%%%%%%%%%%%%%%%%%%%%%%%%%%%%%%

\section{Proof of Theorem~\ref{thm:e7_in_fix}}

Since we realize the representation of the group of type $\EE_7$ 
inside the group $\EE_8$, the calculations of this section mostly reproduce
the calculations in \cite{Luzgarev2014}, for the adjoint representation
of $\EE_8$. However, we cannot directly cite the results of \cite{Luzgarev2014},
since we are only interested on the parts of these polynomials that 
correspond to the weights of $\EE_7$ inside $\EE_8$.

To prove Theorem \ref{thm:e7_in_fix} it suffices to show that if
$g=x_\gamma(\xi)$ (here $\gamma\in\EE_7$, $\xi\in R$), then
$f_{\rho,\sigma}(gx)\in I$ for all $\rho,\sigma\in\Lambda$,
$\rho\perp\sigma$, and $g_\alpha(gx)\in I$ for all $\alpha\in\Phi$.

The following special case of Matsumoto lemma
(see.~\cite[Lemma~2.3]{Matsumoto}), describes the action of an
elementary root unipotent $x_\gamma(\xi)$ on the vectors in $V$.

\begin{lemma}\label{Matsumoto}
Let $\gamma\in\EE_7$, $v\in V$.
\begin{enumerate}
\item 
If $\lambda\in\Lambda$, $\lambda-\gamma\notin\Lambda$, then
  $(x_\gamma(\xi)v)_\lambda=v_\lambda$.
\item 
If $\lambda,\lambda-\gamma\in\Lambda$, then
$(x_\gamma(\xi)v)_\lambda=v_\lambda+N_{\gamma,\lambda-\gamma}\xi v_{\lambda-\gamma}$.
\end{enumerate}
In particular, if $(\gamma,\lambda)\neq 1/2$, 
then $(x_\gamma(\xi)v)_\lambda=v_\lambda$.
\end{lemma}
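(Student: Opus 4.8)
The plan is to derive both formulas from the single observation that, because $(\EE_7,\fo7)$ is a microweight representation, the operator $e_\gamma$ acts on $V$ with square zero, so that the Chevalley exponential $x_\gamma(\xi)=\exp(\xi e_\gamma)$ collapses to the linear expression $1+\xi e_\gamma$. Concretely, I would use the realization of $V=V(\fo7)$ as the graded piece $\mathfrak g_1$ from \S\ref{sec:invariants}, so that for $\gamma\in\EE_7$ the element $e_\gamma$ acts on the admissible basis by the adjoint action of $\EE_8$, namely $e_\gamma\cdot v^\mu=[e_\gamma,e_\mu]=N_{\gamma,\mu}v^{\gamma+\mu}$, with the convention $N_{\gamma,\mu}=0$ whenever $\gamma+\mu$ is not a weight. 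Then $e_\gamma^2 v^\mu=[e_\gamma,[e_\gamma,e_\mu]]$ lies in the weight space $2\gamma+\mu$; since $\EE_8$ is simply laced, every $\gamma$-root string has length at most $2$, so $2\gamma+\mu$ is never a root and the double bracket vanishes. Hence $e_\gamma^2=0$ on $V$ and $x_\gamma(\xi)=1+\xi e_\gamma$.

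Granting this linearization, the computation is immediate. Writing $v=\sum_{\mu\in\Lambda}v^\mu v_\mu$, we get
$$
x_\gamma(\xi)v=\sum_{\mu\in\Lambda}v_\mu\bigl(v^\mu+\xi N_{\gamma,\mu}v^{\gamma+\mu}\bigr).
$$
The coefficient of $v^\lambda$ on the right receives a contribution $v_\lambda$ from the term $\mu=\lambda$, and a contribution $\xi N_{\gamma,\mu}v_\mu$ precisely from the unique index with $\gamma+\mu=\lambda$, i.e.\ $\mu=\lambda-\gamma$. Thus
$$
(x_\gamma(\xi)v)_\lambda=v_\lambda+\xi N_{\gamma,\lambda-\gamma}v_{\lambda-\gamma},
$$
where, by the convention above, the second term is absent exactly when $\lambda-\gamma\notin\Lambda$. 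This is statement (1) in the case $\lambda-\gamma\notin\Lambda$ and statement (2) in the case $\lambda,\lambda-\gamma\in\Lambda$.

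It remains to reconcile the dichotomy ``$\lambda-\gamma\in\Lambda$ or not'' with the inner-product criterion of the final assertion. Here I would invoke the normalization of \S\ref{sec:extended_group}, in which weights and roots are roots of $\EE_8$ with $(\lambda,\lambda)=1$ and $\gamma\perp\delta$ for $\gamma\in\EE_7$. Since $(\lambda,\lambda)=1$, one computes $(\lambda,\lambda-\gamma)=1-(\gamma,\lambda)$, so that $\lambda$ and $\lambda-\gamma$ are at distance $1$, hence $\lambda-\gamma$ is a weight, if and only if $(\gamma,\lambda)=1/2$; conversely, when $(\gamma,\lambda)=1/2$ the vector $\lambda-\gamma$ has norm $1$ and $\alpha_8$-height $1$, hence is itself a weight. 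Consequently $(\gamma,\lambda)\neq 1/2$ forces $\lambda-\gamma\notin\Lambda$, and (1) yields $(x_\gamma(\xi)v)_\lambda=v_\lambda$.

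The only real content is the vanishing $e_\gamma^2=0$, and I expect this to be the single point requiring care: it is exactly the microweight hypothesis (equivalently, the length-$\le 2$ bound on root strings in $\EE_8$) that makes the exponential linear, after which everything reduces to identifying the two surviving monomials and to a one-line inner-product computation. Alternatively, the whole statement is the specialization of \cite[Lemma~2.3]{Matsumoto} to the present microweight module, but the self-contained derivation above makes the appearance of $N_{\gamma,\lambda-\gamma}$ and the exclusivity of the index $\mu=\lambda-\gamma$ transparent.
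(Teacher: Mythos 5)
Your proof is correct, but it is genuinely more than what the paper does: the paper gives no proof at all, simply quoting this statement as a special case of Matsumoto's Lemma~2.3, stated in terms of the structure constants of the Lie algebra of type $\EE_8$. Your self-contained derivation is exactly the right mechanism behind that citation: since $(\EE_7,\varpi_7)$ is microweight, realized as $\mathfrak g_1$ in the $5$-grading of $\EE_8$, the operator $\ad e_\gamma$ for $\gamma\in\EE_7$ maps the weight space $\mu$ to $\mu+\gamma$, and $(\ad e_\gamma)^2=0$ on $\mathfrak g_1$ because a $\gamma$-string in a simply laced system has length at most $2$; hence $x_\gamma(\xi)=1+\xi\,\ad e_\gamma$ on $V$, over $\Int$ and therefore over any ring (the divided powers $e_\gamma^n/n!$, $n\ge2$, vanish identically, so no denominators intervene), and the coefficient extraction gives precisely the paper's formula with the constant $N_{\gamma,\lambda-\gamma}$ and with the unique surviving index $\mu=\lambda-\gamma$. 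What the citation buys is brevity and generality (Matsumoto's lemma treats arbitrary Weyl modules, where higher terms of the exponential survive); what your argument buys is transparency about why only the linear term appears here and why the coefficient is a single structure constant of $\EE_8$. One small stylistic caution: in the last step you justify that $(\gamma,\lambda)=1/2$ implies $\lambda-\gamma\in\Lambda$ by saying a norm-$1$ lattice vector of $\alpha_8$-height $1$ is a weight; this invokes the special fact that the minimal vectors of the $\EE_8$ lattice are exactly the roots, which is true but heavier than needed --- the root-string bound you already used gives directly that, for distinct non-proportional roots, $\lambda-\gamma$ is a root if and only if $(\lambda,\gamma)=1/2$ (in the paper's normalization $(\alpha,\alpha)=1$), and the $\alpha_8$-height count then places it in $\Lambda$. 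With that substitution the argument is uniformly elementary and matches the paper's conventions throughout.
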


Lemma~\ref{Matsumoto} was stated in terms of the structure constants
$N_{\alpha\beta}$ of the Lie algebras of type $\EE_8$. It is
classically known
(see references in \cite{Vavilov_znaki}), that they satisfy the
following relations:
\begin{align*}
&N_{\alpha\beta}=N_{-\beta,-\alpha} = -N_{-\alpha,-\beta} = -N_{\beta\alpha},
\\
&N_{\alpha\beta} = N_{\beta\gamma} = N_{\gamma\alpha}.
\end{align*}
Moreover, they are subject to the {\em cocycle identity}:
\begin{align*}
N_{\beta\gamma}N_{\alpha,\beta+\gamma} = N_{\alpha+\beta,\gamma}N_{\alpha\beta}.
\end{align*}
In the sequel we use these equalities without any specific reference.

Recall that $f_{\rho,\sigma}$ corresponds to the maximal square 
$\Omega = \Omega(\alpha)$, for some root $\alpha\in\EE_7$. The
weights of the square $\Omega$ can be partitioned into pairs of
orthogonal weights $\{\lambda,\mu\}$ with
$\lambda+\mu = \delta+\alpha$; the polynomial $f_{\rho,\sigma}$ 
consists of monomials of the form $\pm x_\lambda x_\mu$ for all
such pairs. Let us trace what happens with such monomials when
$x$ is mapped to $gx = x_\gamma(\xi)x$. Observe, that 
$(\lambda,\gamma)$ can only take values 
$0$ and $\pm 1/2$. If $(\lambda,\gamma)\neq 1/2$, the by 
Lemma~\ref{Matsumoto} one has $(gx)_\lambda = x_\lambda$.
Calculating the inner product of $\lambda+\mu = \delta+\alpha$ 
by $\gamma$ and recalling that $(\delta,\gamma) = 0$, we get that
$$
(\lambda,\gamma) + (\mu,\gamma) = (\alpha,\gamma).
$$
The following cases can possibly occur:
\begin{itemize}
\item 
$(\alpha,\gamma) = -1$ or $-1/2$. Then none of the summands on
the right hand side equals $1/2$, and thus $f_{\rho,\sigma}(gx) =
  f_{\rho,\sigma}(x)\in I$.
\item 
$(\alpha,\gamma) = 1/2$. Then exactly one summand on the right 
hand side equals $1/2$, whereas the second one is $0$. Let,
for instance, $(\lambda,\gamma) = 1/2$ and $(\mu,\gamma) =
  0$. Similarly, suppose that $(\rho,\gamma) = 1/2$ and
  $(\sigma,\gamma) = 0$. Then
  $(gx)_\lambda (gx)_\mu = (x_\lambda +
  \xi N_{\gamma,\lambda-\gamma}x_{\lambda-\gamma})x_\mu$.
  Expanding these equalities, summing with signs over all pairs 
  of orthogonal weights, we get 
$$
  f_{\rho,\sigma}(gx) = f_{\rho,\sigma}(x) +
  \xi N_{\gamma,\rho-\gamma}x_{\rho-\gamma}x_\sigma -
  \xi\sum N_{\rho,-\lambda}N_{\sigma,-\mu}N_{\gamma,\lambda-\gamma}
  x_{\lambda-\gamma} x_\mu.
$$
  Observe, that the sum $\lambda-\gamma+\mu = \delta+\alpha-\gamma$ 
  does not depend on $\lambda$. Thus, the pairs of weights 
  $\{\rho-\gamma,\sigma\}$ and
  $\{\lambda-\gamma,\mu\}$ appear in the maximal square 
  $\Omega(\alpha-\gamma)$.  Let us verify that
$$
  f_{\rho,\sigma}(gx) = f_{\rho,\sigma}(x) + \xi N_{\gamma,\rho-\gamma}
  f_{\rho-\gamma,\sigma}(x).
$$
With this end, it only remains to check that the signs coincide:
$$
  N_{\rho,-\lambda}N_{\sigma,-\mu}N_{\gamma,\lambda-\gamma} =
  N_{\gamma,\rho-\gamma}N_{\rho-\gamma,\gamma-\lambda}N_{\sigma,-\mu}.
$$
But this immediately follows from the cocycle identity.
\item
$(\alpha,\gamma) = 0$. Then either both summand on the left hand
side are $0$, or one of them equals $1/2$, while the other one equals 
$-1/2$. The summands, for which $(\lambda,\gamma) = (\mu,\gamma) = 0$, 
do not contribute to the difference 
$f_{\rho,\sigma}(gx) - f_{\rho,\sigma}(x)$. Now, let
  $(\lambda,\gamma) = 1/2$ and $(\mu,\gamma) = -1/2$. Then
  $\lambda-\gamma$ and $\mu+\gamma$ are weights that sum to
  $\lambda+\mu=\delta+\alpha$; Moreover, $(\lambda-\gamma,\gamma) =
  -1/2$ and $(\mu+\gamma,\gamma) = 1/2$. Thus, the weights
  $\{\lambda-\gamma,\mu+\gamma\}$ are orthogonal and, thus, belong to 
  the same maximal square $\Omega$. At that, 
\begin{align*}
&N_{\rho,-\lambda}N_{\sigma,-\mu}(gx)_\lambda (gx)_\mu 
  + N_{\rho,-\lambda+\gamma}N_{\sigma,-\mu-\gamma}
  (gx)_{\lambda-\gamma}(gx)_{\mu+\gamma}
\\
&\quad = N_{\rho,-\lambda}N_{\sigma,-\mu}x_\lambda x_\mu 
  + N_{\rho,-\lambda+\gamma}N_{\sigma,-\mu-\gamma}
  x_{\lambda-\gamma}x_{\mu+\gamma}
\\
&\qquad+ \xi N_{\rho,-\lambda}N_{\sigma,-\mu}N_{\gamma,\lambda-\gamma}
  x_{\lambda-\gamma}x_\mu
  + \xi N_{\rho,-\lambda+\gamma}N_{\sigma,-\mu-\gamma}
  N_{\gamma,\mu}x_{\lambda-\gamma}x_\mu.
\end{align*}
An easy calculation shows that the summands, containing $\xi$,
cancel. Thus, $f_{\rho,\sigma}(gx) = f_{\rho,\sigma}(x)$.
\item 
$(\alpha,\gamma) = 1$, i.e. $\alpha=\gamma$.
  In this case $(\lambda,\gamma) = (\mu,\gamma) = 1/2$.
  Thus, 
\begin{align*}
(gx)_\lambda (gx)_\mu &= 
  (x_\lambda+\xi N_{\gamma,\lambda-\gamma}x_{\lambda-\gamma})
  (x_\mu + \xi N_{\gamma,\mu-\gamma}x_{\mu-\gamma})
\\
&= x_\lambda x_\mu
  + \xi N_{\gamma,\lambda-\gamma} x_{\lambda-\gamma}x_\mu 
\\
&\quad + \xi N_{\gamma,\mu-\gamma} x_\lambda x_{\mu-\gamma}
  + \xi^2 N_{\gamma,\lambda-\gamma}N_{\gamma,\mu-\gamma}
    x_{\lambda-\gamma}x_{\mu-\gamma}.
\end{align*}
Altogether, we get $6$ summands containing $\xi^2$; the corresponding
monomials are of the form $\pm x_{\lambda-\gamma}x_{\mu-\gamma}$, where
  $(\lambda-\gamma) + (\mu-\gamma) = \lambda + \mu - 2\gamma = \delta
  - \alpha$. Thus, the $6$ pairs of weights of the form
  $\{\lambda-\gamma,\mu-\gamma\}$ constitute the maximal square
  $\Omega(-\alpha)$.
  It remains to verify that also the signs of these summands 
  coincide with the signs in the square equation corresponding
  to $\Omega(-\alpha)$. Indeed,
$$
  N_{\rho,-\lambda}N_{\sigma,-\mu} N_{\gamma,\lambda-\gamma}N_{\gamma,\mu-\gamma}
  = N_{\gamma,\rho-\gamma}N_{\gamma,\sigma-\gamma}
  N_{\rho-\gamma,-\lambda+\gamma}N_{\sigma-\gamma,-\mu+\gamma}.
$$
Finally, $12$ summands in the above sum contain $\xi$; the corresponding 
monomials are of the form $\pm x_{\lambda-\gamma}x_\mu$ and
  $\pm x_\lambda x_{\mu-\gamma}$, where $(\lambda-\gamma)+\mu =
  \lambda + (\mu-\gamma) = \delta$. It is easy to see that these are
  precisely the monomials that occur in $g_\alpha$. It only remains to
  verify that their signs agree:
$$
N_{\rho,-\lambda}N_{\sigma,-\mu}N_{\gamma,\mu-\gamma} = N_{\lambda,\mu-\gamma}.
$$
By summarizing the above, we get
$$
  f_{\rho,\sigma}(gx) = f_{\rho,\sigma}(x)
  + \xi N_{\gamma,\sigma-\gamma}N_{\rho,\sigma-\gamma} g_\alpha(x)
  + \xi^2  N_{\gamma,\rho-\gamma}N_{\gamma,\sigma-\gamma}
    f_{\rho-\gamma,\sigma-\gamma}.
$$
\end{itemize}

Next, we look at $g_\alpha(gx)$. The monomials that occur in
$g_\alpha$, are of the form $x_\lambda x_{\overline\lambda}$, where 
$\lambda$ runs over the maximal square $\Omega(\alpha)$, whereas 
$\overline\lambda = \delta-\lambda$.
Taking the inner product of $\lambda + \overline\lambda = \delta$ 
with $\gamma$, we get:
$$
(\lambda,\gamma) + (\overline\lambda,\gamma) = (\delta,\gamma) = 0.
$$
Observe, that $(\lambda,\alpha)=1/2$ and $(\overline\lambda,\alpha) = -1/2$.
The inner product $(\alpha,\gamma)$ can take the following values:
\begin{itemize}
\item 
$(\alpha,\gamma) = -1$, i.e. $\gamma=-\alpha$. But then
  $(\lambda,\gamma) = -1/2$, and thus $(gx)_\lambda = x_\lambda$ for
  all $\lambda\in\Omega(\alpha)$. It follows that $g_\alpha(gx) = g_\alpha(x)$.
\item $(\alpha,\gamma) = 1$, i.e. $\gamma=\alpha$.
Then $(gx)_\lambda (gx)_{\overline\lambda} =
(x_\lambda + \xi N_{\gamma,\lambda-\gamma}x_{\lambda-\gamma})
x_{\overline\lambda}$, and thus
$$
g_\alpha(gx) = g_\alpha(x) + \sum_{\lambda\in\Omega(\alpha)}
\xi N_{\lambda,\overline\lambda}N_{\gamma,\lambda-\gamma}x_{\lambda-\gamma}x_{\overline\lambda}.
$$
At that $(\lambda-\gamma) + \overline\lambda = \delta-\gamma$.
Observe that if $\lambda\in\Omega(\alpha)$, then
$\lambda^* = \overline\lambda+\gamma\in\Omega(\alpha)$; thus the weights
$\{\lambda-\gamma,\overline\lambda\}$ form an orthogonal pair of weights
and sit in $\Omega(-\alpha)$. Take an arbitrary $\lambda_0\in\Omega(\alpha)$ 
and set
$\rho = \lambda_0-\gamma$, $\sigma = \overline\lambda_0$.
The equality $N_{\lambda,\overline\lambda}N_{\gamma,\lambda-\gamma} =
N_{\overline\lambda+\gamma,\lambda-\gamma}N_{\gamma,\overline\lambda}$ 
implies that the same on the right hand side equals $2\xi
N_{\rho+\gamma,\sigma}N_{\gamma,\rho} f_{\rho,\sigma}(x)$.
One can conclude that
$g_\alpha(gx) = g_\alpha(x) + 2\xi
N_{\rho+\gamma,\sigma}N_{\gamma,\rho} f_{\rho,\sigma}(x)$.

\item 
$(\alpha,\gamma) = 0$.
Let $\lambda\in\Omega(\alpha)$, i.e. $(\lambda,\alpha) = 1/2$.
If $(\lambda,\gamma) = 1/2$, then $\lambda-\gamma$ is a weight
Moreover, $(\lambda-\gamma,\alpha) = 1/2$, and thus
$\lambda-\gamma\in\Omega(\alpha)$. Furthermore,
$(\lambda-\gamma,\gamma) = -1/2$, and thus
$(\overline{\lambda-\gamma},\gamma) = 1/2$.

Let us look at what happens with the monomials in $g_\alpha$
corresponding to the weights $\lambda, \lambda-\gamma$:
\begin{align*}
& N_{\lambda,\overline\lambda}(gx)_\lambda(gx)_{\overline\lambda}
+ N_{\lambda-\gamma,\overline{\lambda-\gamma}}
(gx)_{\lambda-\gamma}(gx)_{\overline{\lambda-\gamma}}\\
&\quad= N_{\lambda,\overline\lambda}x_\lambda x_{\overline\lambda}
+ \xi N_{\lambda,\overline\lambda}N_{\gamma,\lambda-\gamma}
      x_{\lambda-\gamma}x_{\overline\lambda}\\
&\qquad+ N_{\lambda-\gamma,\overline{\lambda-\gamma}}
x_{\lambda-\gamma}x_{\overline{\lambda-\gamma}}
+ \xi N_{\lambda-\gamma,\overline{\lambda-\gamma}}N_{\gamma,\overline\lambda}
x_{\lambda-\gamma}x_{\overline\lambda}.
\end{align*}
But $N_{\lambda,\overline\lambda}N_{\gamma,\lambda-\gamma} = -
N_{\lambda-\gamma,\overline{\lambda-\gamma}}N_{\gamma,\overline\lambda}$,
so that the summands containing $\xi$ cancel. This shows that 
$g_\alpha(gx) = g_\alpha(x)$.
\item 
$(\alpha,\gamma) = 1/2$.
Let $\lambda\in\Omega(\alpha)$.
Look at the weight $\lambda^* = \delta+\alpha-\lambda\in\Omega(\alpha)$.
One has $\lambda +\lambda^* = \delta+\alpha$, and thus
$(\lambda,\gamma) + (\lambda^*,\gamma) = (\alpha,\gamma) = 1/2$.
This means that one of the summands on the right hand side equals
$1/2$, while another one equals $0$. If follows, that for 6 out of the 12
weights $\lambda\in\Omega(\alpha)$ one has $(\lambda,\gamma) = 1/2$. 
Denote the set of these weights by $L$. It follows that
\begin{align*}
g_\alpha(gx) &= \sum N_{\lambda,\overline\lambda} (gx)_\lambda (gx)_{\overline\lambda}
\\
&= \sum_{\lambda\in\Omega(\alpha)} N_{\lambda,\overline\lambda} x_\lambda x_{\overline\lambda}
+ \sum_{\lambda\in L} \xi N_{\lambda,\overline\lambda}N_{\gamma,\lambda-\gamma}
x_{\lambda-\gamma}x_{\overline\lambda}.
\end{align*}
Observe that the sum $(\lambda-\gamma)+\overline\lambda = \delta-\gamma$ 
does not depend on $\lambda$. This means that the pairs of orthogonal weights
$\{\lambda-\gamma,\overline\lambda\}$ in the second sum belong to the maximal
square $\Omega(-\gamma)$, and since there are 6 such pairs, they exhaust this
square. Let us fix one such pair $\{\rho,\sigma\}$ and show that up to sign
the second sum equals $\xi f_{\rho,\sigma}(x)$. With this end it remains
to notice that
$N_{\lambda,\overline\lambda}N_{\gamma,\lambda-\gamma} =
N_{\rho+\gamma,\sigma}N_{\gamma,\rho}N_{\rho,-\lambda+\gamma}N_{\sigma,-\overline\lambda}$.
Finally, we get
$$
g_\alpha(gx) = g_\alpha(x) + \xi N_{\rho+\gamma,\sigma}N_{\gamma,\rho}
f_{\rho,\sigma}(x).
$$
\item $(\alpha,\gamma) = -1/2$. Observe that $g_{-\alpha} =
  -g_\alpha$, and thus, replacing $\alpha$ by
  $-\alpha$, we fall into the above case. 
\end{itemize}

%%%%%%%%%%%%%%%%%%%%%%%%%%%%%%%%%%%%%%%%%%%%%%%%%%%%%%%%%%%%%%

\section{Proof of Theorem~\ref{thm:g_i}: An outline}

First, let $f_1,\ldots,f_s$ be arbitrary polynomials in $t$ 
variables with coefficients in a commutative ring
$R$ (in the majority of the real world applications, $R=\mathbb Z$ 
or $R={\mathbb Z}[1/2]$). We are interested in the linear changes
of variables $g\in\GL(t,R)$ that preserve the condition that all
these polynomials simultaneously vanish. In other words, we
consider all $g\in\GL(t,R)$ preserving the ideal $A$ of the
ring $R[x_1,\ldots,x_t]$ generated by $f_1,\ldots,f_s$. This last
condition means that for any polynomial $f\in A$ the polynomial 
$f\circ g$ obtained from $f$ by the linear substitution $g$ is
again in $A$. It is well known (see, e.g.\ 
\cite[Lemma~1]{Dixon} or \cite[Proposition~1.4.1]{Waterhouse_det}),
that the set $G_A(R)=\Fix_R(A)=\Fix_R(f_1,\ldots,f_s)$ of all
such linear variable changes $g$ forms a group. For any $R$-algebra
$S$ with 1 we can consider $f_1,\ldots,f_s$ as polynomials with 
coefficients in $S$ and, thus, the group $G(S)$ is defined for all
$R$-algebras. It is clear that $G(S)$ depends functorially on
$S$. It is easy to provide examples showing that
$S\mapsto G(S)$ may fail to be an affine group scheme over $R$. 
This is due to the fact that $G_A(R)$ is defined by {\it
congruences\/}, rather than equations, in its matrix entries.
However, in~\cite{Waterhouse_det}, Theorem~1.4.3 and further, 
a simple sufficient condition was found, that guarantees that
$S\mapsto G(S)$ is an affine group scheme. Denote by
$R[x_1,\ldots,x_t]_r$ the submodule of polynomials of degree
at most $r$. For our purposes it suffices to invoke 
Corollary 1.4.6 of \cite{Waterhouse_det}, pertaining to the case
where $R=\mathbb Z$.

\begin{lemma}
Let $f_1,\ldots,f_s\in {\mathbb
Z}[x_1,\ldots,x_t]$ be polynomials of degree at most 
$r$ and let $A$ be the ideal they generate. Then for the functor
$S\mapsto\Fix_S(f_1,\ldots,f_s)$ to be an affine group scheme,
it suffices that the rank of the intersection $A\cap
R[x_1,\ldots,x_t]_r$ does not change under reduction modulo
any prime $p\in\mathbb Z$.
\end{lemma}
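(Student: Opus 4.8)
The plan is to reduce the group-scheme question to a single base-change statement in commutative algebra and then to recognize the stated rank hypothesis as exactly the numerical criterion that secures it. Write $P=\mathbb{Z}[x_1,\dots,x_t]$, let $V=P_r$ be the free $\mathbb{Z}$-module of polynomials of degree at most $r$ (of rank $n=\binom{t+r}{r}$), and for a commutative ring $S$ write $P^S=S[x_1,\dots,x_t]$, $V_S=V\otimes_{\mathbb{Z}}S=(P^S)_r$, and $A_S$ for the ideal of $P^S$ generated by $f_1,\dots,f_s$. A linear change of variables $g\in\GL(t,S)$ induces a degree-preserving automorphism $\sigma_g$ of $P^S$; since substitution is a ring homomorphism, $g$ preserves $A_S$ if and only if $\sigma_g(f_i)\in A_S$ for every $i$. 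As each $f_i$ has degree at most $r$, so does $\sigma_g(f_i)$, and the defining condition collapses to $\sigma_g(f_i)\in A_S\cap V_S$ for $i=1,\dots,s$. The coordinates of $\sigma_g(f_i)$ in the monomial basis of $V_S$ are polynomials in the entries of $g$, so the entire difficulty is to convert membership in $A_S\cap V_S$ into a closed (polynomial) condition.

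First I would analyse the truncated ideal over $\mathbb{Z}$. Put $N=A\cap V\subseteq V$; as a submodule of a free module over a PID it is free, of rank $m=\operatorname{rank}_{\mathbb{Z}}N=\dim_{\mathbb{Q}}(A_{\mathbb{Q}}\cap V_{\mathbb{Q}})$, and set $Q=V/N$. Suppose for the moment that $Q$ is torsion-free and that the intersection commutes with base change, i.e.\ $A_S\cap V_S=N\otimes_{\mathbb{Z}}S$ for every $S$. Then $N$ is a direct summand $V=N\oplus C$ with $C\cong Q$ free, and writing $\pi\colon V\to C$ for the projection we obtain $\sigma_g(f_i)\in A_S\cap V_S$ precisely when $\pi_S(\sigma_g(f_i))=0$ in $C_S$. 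This is a finite system of $S$-linear equations in the coordinates of $\sigma_g(f_i)$, hence a finite system of polynomial equations in the entries of $g$. These cut out a closed subscheme $Z\subseteq\GL(t,-)$ with $Z(S)=\Fix_S(f_1,\dots,f_s)$ for all $S$; being a subgroup functor of $\GL(t,-)$, it is an affine group scheme, which is the assertion.

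It therefore remains to show that the rank hypothesis forces both that $Q$ is torsion-free and that $A_S\cap V_S=N\otimes S$, and this is the crux. Over $\mathbb{Q}$ one has $\dim_{\mathbb{Q}}(A_{\mathbb{Q}}\cap V_{\mathbb{Q}})=m$ automatically; the content of the hypothesis is that $\dim_{\mathbb{F}_p}(A_{\mathbb{F}_p}\cap (P^{\mathbb{F}_p})_r)$ is again $m$ for every prime $p$. Since $(P/A)\otimes_{\mathbb{Z}}\kappa(p)=\kappa(p)[x]/A_{\kappa(p)}$ by right-exactness of the tensor product, this says exactly that the degree-$\le r$ part of the fibre ring $(P/A)\otimes\kappa(p)$ has dimension $n-m$ in every characteristic, generic and special alike. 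Thus the fibrewise truncated Hilbert function of $P/A$ is constant over $\operatorname{Spec}\mathbb{Z}$; by the numerical criterion for flatness over the one-dimensional regular base $\mathbb{Z}$, this forces the truncation $(P/A)_{\le r}=Q$ to be flat (hence torsion-free) and its formation to commute with arbitrary base change, which unravels to $(P^S/A_S)_{\le r}=Q\otimes S$, i.e.\ $A_S\cap V_S=N\otimes S$. This is precisely the content of Waterhouse's criterion~\cite[Cor.~1.4.6]{Waterhouse_det}, and invoking it completes the argument.

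The step I expect to be the genuine obstacle is this last one: reconciling ``reduce modulo $p$, then intersect with $V$'' with ``intersect over $\mathbb{Z}$, then reduce modulo $p$''. These two operations differ in general by a characteristic-dependent defect — the same phenomenon that makes Castelnuovo–Mumford regularity sensitive to the characteristic — and each of the two ways the fibre dimension could fail to equal the generic value $m$ would spoil representability: an upward jump in the low-degree part of $A_{\mathbb{F}_p}$ breaks base change, while $p$-torsion in $Q$ replaces the polynomial equations $\pi_S(\sigma_g(f_i))=0$ by genuine divisibility congruences in the entries of $g$. The hypothesis that $\dim_{\mathbb{F}_p}(A_{\mathbb{F}_p}\cap (P^{\mathbb{F}_p})_r)$ neither exceeds nor drops below its generic value $m$ is exactly what excludes both pathologies at once; everything else in the proof is formal linear algebra.
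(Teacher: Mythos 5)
Your opening reduction is correct, and it is indeed the mechanism behind this statement: if $N=A\cap V$ is a direct summand of $V$ and $A_S\cap V_S$ equals the image of $N\otimes S$ for \emph{every} ring $S$, then $\Fix_S$ is cut out of $\GL(t,S)$ by the polynomial equations $\pi_S(\sigma_g(f_i))=0$ and is a closed subgroup scheme. Bear in mind, though, that the paper never proves this lemma at all --- it is quoted as Corollary~1.4.6 of \cite{Waterhouse_det} --- so the entire burden of a blind proof is the commutative-algebra crux, and that is exactly where your argument has a gap. The ``numerical criterion for flatness'' applies to the fibres $Q\otimes\kappa(p)$ of the fixed $\mathbb{Z}$-module $Q=V/N$; the hypothesis of the lemma controls a different quantity, namely the truncation of the fibre, $\kappa(p)[x]_r/\bigl(A_{\kappa(p)}\cap\kappa(p)[x]_r\bigr)$. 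The latter is a proper quotient of the former whenever there is a defect: writing $\rho_p(N)$ for the image of $N\otimes\mathbb{F}_p$ in $V_{\mathbb{F}_p}$, one has $Q\otimes\mathbb{F}_p=V_{\mathbb{F}_p}/\rho_p(N)$, which surjects onto $V_{\mathbb{F}_p}/(A_{\mathbb{F}_p}\cap V_{\mathbb{F}_p})$ with kernel $(A_{\mathbb{F}_p}\cap V_{\mathbb{F}_p})/\rho_p(N)$. Since unconditionally $\dim\rho_p(N)=m-\dim Q[p]$ and $\dim Q\otimes\mathbb{F}_p=(n-m)+\dim Q[p]$, the hypothesis $\dim(A_{\mathbb{F}_p}\cap V_{\mathbb{F}_p})=m$ translates into the single identity $\dim\bigl((A_{\mathbb{F}_p}\cap V_{\mathbb{F}_p})/\rho_p(N)\bigr)=\dim Q[p]$: it says the two pathologies have \emph{equal} size at every $p$, not that each vanishes. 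So your closing assertion that the hypothesis ``excludes both pathologies at once'' is precisely the unproved step; ruling out a matched nonzero defect and torsion is the actual content of Waterhouse's corollary, it is not formal linear algebra, and your text at this exact point invokes \cite[Cor.~1.4.6]{Waterhouse_det} --- the statement being proved --- which makes the argument circular.

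A secondary link in your chain is also false as stated: flatness of $Q$ does not imply that the truncation commutes with base change. Take $t=1$, $r=2$, $A=(x+px^2)\subseteq\mathbb{Z}[x]$. Then $N=\mathbb{Z}(x+px^2)$ is generated by a primitive vector, so $Q$ is free; yet $A_{\mathbb{F}_p}=(x)$, hence $A_{\mathbb{F}_p}\cap\mathbb{F}_p[x]_2=\langle x,x^2\rangle$ strictly contains $\rho_p(N)=\langle x\rangle$. (Here the rank jumps from $1$ to $2$, so the lemma's hypothesis fails and there is no contradiction with the lemma; but the implication ``$Q$ flat $\Rightarrow$ base change'' is dead.) What universal base change really requires is that for every $S$ the connecting map $\operatorname{Tor}_1^{\mathbb{Z}}\bigl((\mathbb{Z}[x]/A)/Q,\,S\bigr)\to Q\otimes S$ vanish --- for which torsion-freeness of the part of $\mathbb{Z}[x]/A$ \emph{above} degree $r$ is the natural sufficient condition --- and checking this for all $S$, not merely for the prime fields that the rank hypothesis sees, is the hard, ideal-theoretic part of Waterhouse's proof that the proposal leaves out.
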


We apply this lemma to the case of the ideal $A=I$ in ${\mathbb Z}[x_{\lambda}]$,
constructed in~\S\ref{sec:construction}. For any commutative ring
$R$ we set $G_I(R)=\Fix_R(I)$.

\begin{lemma}
The functor $R\mapsto G_I(R)$ is an affine group scheme
defined over $\Int$.
\end{lemma}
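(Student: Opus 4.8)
The plan is to verify the hypotheses of the preceding Lemma (Waterhouse's criterion, Corollary 1.4.6 of \cite{Waterhouse_det}) directly for the ideal $I$ constructed in~\S\ref{sec:construction}. Our generators $f_{\mu,\mu^*}$ and $g_i$ are all \emph{quadratic}, so we take $r=2$. The task thus reduces to a single rank computation: I must show that the rank of the intersection $I\cap\Int[x_\lambda]_2$ of the ideal with the space of polynomials of degree at most $2$ is the same over $\mathbb Q$ as over every $\mathbb F_p$. Since every generator already lies in $\Int[x_\lambda]_2$, and since multiplying a quadric by a nonconstant polynomial raises the degree above $2$, the intersection $I\cap\Int[x_\lambda]_2$ is precisely the $\Int$-span (saturated inside degree $\le 2$) of the $133$ quadratic generators. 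Concretely, $I\cap\Int[x_\lambda]_2$ is the saturation of the lattice generated by $f_{\mu,\mu^*}$ ($126$ of them) and $g_1,\dots,g_7$ inside the free $\Int$-module of quadratic-or-lower polynomials in $56$ variables.

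First I would assemble the $133\times\binom{57}{2}$ integer matrix $M$ whose rows are the coefficient vectors of the generators with respect to the monomial basis $\{x_\lambda x_\mu\}$ of $\Int[x_\lambda]_2$ (the generators have no linear or constant part). The rank being stable under reduction modulo $p$ is equivalent to the statement that $M$ admits a Smith normal form whose nonzero elementary divisors are all equal to $\pm 1$ --- that is, the subgroup generated by the rows is a \emph{pure} (saturated) sublattice, with no torsion appearing in any residue field. Because the structure constants $N_{\alpha\beta}$ take values in $\{0,\pm 1\}$ for a microweight representation, every entry of $M$ lies in $\{0,\pm 1\}$, and one checks that each square equation $f_{\rho,\rho^*}$ has a \emph{leading} monomial $x_\rho x_{\rho^*}$ with coefficient $\pm 1$ that occurs in no other generator; likewise each $g_i$ contains a monomial $x_\lambda x_{\overline\lambda}$ that distinguishes it. This near-triangular structure should force the nonzero invariant factors to be units.

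The main obstacle --- and the only place where care beyond bookkeeping is required --- is confirming that the interplay between the $f$'s and the $g$'s produces \emph{no} hidden prime in the Smith form, in particular no factor of $2$. The construction in~\S\ref{sec:invariants} already flagged characteristic $2$ as delicate (the corrections there were multiples of $2$), so the honest worry is that a $\Int$-linear combination of the $133$ generators is divisible by $2$ while the corresponding combination of coefficient vectors is not primitive. To rule this out I would carry out the Smith normal form computation of $M$ over $\Int$ explicitly (this is a finite, if large, calculation of the type that underlies~\S4), and verify that all nonzero elementary divisors equal $1$; equivalently, that $\operatorname{rank}_{\mathbb F_p}(M\bmod p)=\operatorname{rank}_{\mathbb Q}(M)$ for every prime $p$. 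Given the $\{0,\pm1\}$ entries and the distinguished-monomial structure noted above, the rank is insensitive to the prime, so the hypothesis of the Lemma is met. By that Lemma the functor $S\mapsto\Fix_S(I)$ is represented by an affine group scheme over $\Int$, which is exactly the assertion that $R\mapsto G_I(R)$ is an affine group scheme defined over $\Int$.
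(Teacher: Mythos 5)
Your proposal follows the same route as the paper's proof: invoke the Waterhouse criterion with $r=2$, identify $I\cap\Int[x_\lambda]_2$ with the lattice spanned by the $133$ quadrics, and reduce everything to showing that these quadrics stay linearly independent modulo every prime. (One correction of principle: since the generators are homogeneous of degree $2$, the intersection \emph{is} their $\Int$-span, not the saturation of that span --- the entire content of the lemma is precisely that this span turns out to be saturated, so stated your way the hypothesis would hold vacuously.) Your treatment of the $126$ square equations is correct and is the paper's: the sums $\rho+\rho^*$ are distinct for distinct maximal squares, so the $f_{\rho,\rho^*}$ have pairwise disjoint monomial supports, disjoint also from the supports of the $g_i$. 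The genuine gap is the claim that ``likewise each $g_i$ contains a monomial $x_\lambda x_{\overline\lambda}$ that distinguishes it''. This is false for every $i\neq 7$. The monomial $x_\lambda x_{\overline\lambda}$ occurs in $g_j$ not only when $\lambda\in\Omega(\alpha_j)$ but also when $\overline\lambda\in\Omega(\alpha_j)$, i.e.\ exactly when $(\lambda,\alpha_j)=\pm1/2$; for it to lie in $g_i$ alone one would need $(\lambda,\alpha_j)=0$ for all $j\neq i$, and then $\lambda$ or $\overline\lambda$ is a dominant weight of the minuscule module $V(\varpi_7)$, forcing $\lambda\in\{1,\overline 1\}$ and $i=7$. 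So only $g_7$ owns a monomial (namely $x_1x_{\overline 1}$); every monomial of $g_1,\dots,g_6$ is shared with at least one other $g_j$, and the ``near-triangular structure'' you lean on does not exist in the $g$-block. Note also that your worry is aimed at the wrong place: since the $f$'s and $g$'s have disjoint supports there is no $f$--$g$ interplay at all; the whole issue is the mutual independence of the seven $g$'s, and there the $\{0,\pm1\}$ structure genuinely does not suffice --- this block contains $2\times2$ minors equal to $\pm2$ (at the branch vertex of the weight diagram) and natural $7\times7$ minors equal to $\pm3$ or $0$. The deferred ``explicit Smith normal form computation'' is therefore not a safety check but the entire missing proof, and it is never carried out.

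The paper closes exactly this point by specialization rather than by private monomials: evaluate at $x_\lambda=x_{\overline\lambda}=1$, all other coordinates zero. Every $f_{\rho,\rho^*}$ vanishes there (its monomials sit on pairs at distance $2$), while $g_j$ evaluates to $\pm1$ precisely when the vertex $\lambda$ carries an edge marked $j$ in the weight diagram; choosing seven weights incident to few edge labels produces a $7\times7$ evaluation matrix on $g_1,\dots,g_7$, and unimodularity of that matrix is the finite certificate your argument lacks. Even this must be done with care, and in a way consistent with the obstruction above: no antipodal pair can isolate $g_i$ for $i\le 6$, so the evaluation matrix can at best be triangular-up-to-units, never diagonal; indeed the paper's own printed list has a slip, since $\lambda_1=\overline 8$ and $\lambda_2=8$ define the \emph{same} specialization point (at which both $g_1$ and $g_2$ are $\pm1$), and one of the two weights must be replaced --- taking the four chain weights $1,2,3,4$, the two weights directly below the branch vertex, and one weight further down does give determinant $\pm1$ over $\Int$. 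Supplying such an explicit unimodular $7\times7$ minor (or actually executing your Smith-form computation) is what it takes to turn your outline into a proof; as written, you have established independence of the $126$ square equations but not of the seven polynomials $g_i$, which is the only delicate assertion in the lemma.
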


\begin{proof}
Let us show that for any prime $p$ the $133$ generating the
ideal $I$ are independent modulo $p$. Indeed, specializing 
$x_\lambda$ appropriately, we can guarantee that one of these 
polynomials takes value $1$, while all other vanish. Observe,
that the polynomials $f_{\lambda,\mu}$ only contain monomials
$x_\nu x_\rho$ for $d(\nu,\rho)=2$ and $\nu+\rho=\lambda+\mu$, 
and that for all 126 polynomials of our generating set the
sum $\lambda+\mu$ takes distinct values. Furthermore, the
polynomials 
$g_\alpha$ only contain monomials $x_\nu x_\rho$ for $d(\nu,\rho) =
3$.
Thus, for $f_{\lambda,\mu}$ is suffices to set $x_\lambda=x_\mu=1$ and
$x_\nu=0$ for all other weights, the monomial $x_\lambda x_\mu$ only
occurs in $f_{\lambda,\mu}$. Finally, for $g_i$, $i=1,\dots,7$,
one can set $x_{\lambda_i} = x_{\overline{\lambda_i}} = 1$ and 
$x_\nu=0$ for all other $\nu$, where $\lambda_i$ has the following
property: $\alpha_i$ is the unique fundamental root such that
the difference $\lambda_i - \alpha_i$ is a weight.
As $\lambda_1,\dots,\lambda_7$ one can take, for instance, the
weights $\overline{8}, 8, 6, 4, 3, 2, 1$ (Figure~2).
\begin{figure}[h]
\begin{center}
\includegraphics{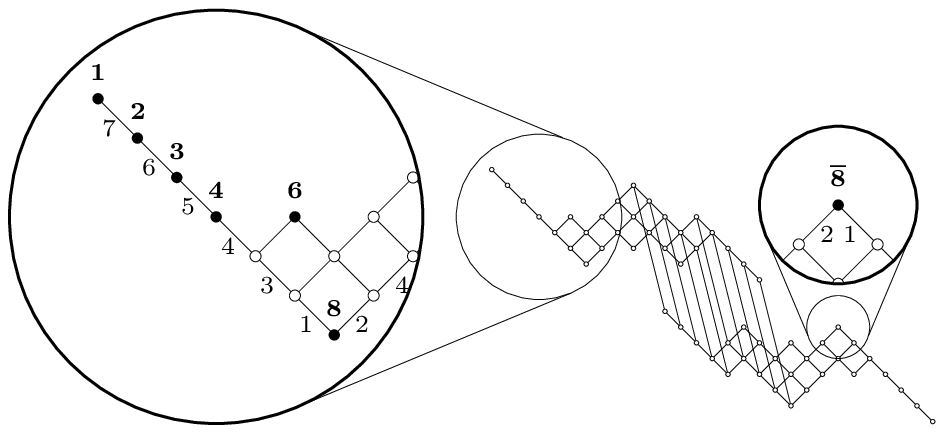}
\caption{}
\end{center}
\end{figure}
\end{proof}

To prove the main results of the present paper, we need to recall some 
further well known facts. The following lemma is Theorem~1.6.1 
of~\cite{Waterhouse_det}.

\begin{lemma}\label{lem:Waterhouse}
Let $G$ and $H$ be affine group schemes of finite type over 
$\Int$, where $G$ is flat, and let $\phi\colon G\to H$ be a morphism
of group schemes. Assume that the following conditions are satisfied
for an algebraically closed field $K${\rm:}
\begin{enumerate}
\item 
$\dim(G_K)\geq\dim_K(\Lie(H_K))$,
\item 
$\phi$ induces monomorphisms on the groups of points $G(K)\to H(K)$ and
  $G(K[\delta])\to H(K[\delta])$,
\item 
the normalizer of $\phi(G^0(K))$ in $H(K)$ is contained in $\phi(G(K))$.
\end{enumerate}
Then $\phi$ is an isomorphism of group schemes over $\Int$.
\end{lemma}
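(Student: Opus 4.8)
The plan is to prove the statement in two stages: first show that $\phi$ induces an isomorphism on every geometric fibre over $\operatorname{Spec}\Int$ (the "field case"), and then promote this fibre-wise isomorphism to an isomorphism over $\Int$ using the flatness of $G$. For the field case, fix an algebraically closed field $K$; the engine is a dimension collapse. Condition (2), read over the dual numbers $K[\delta]$ (with $\delta^2=0$), says precisely that the differential $d\phi_e\colon\Lie(G_K)\to\Lie(H_K)$ is injective, because the tangent space at the identity is the kernel of reduction $G(K[\delta])\to G(K)$ and $\phi$ is simultaneously injective on honest $K$-points. Hence $\dim_K\Lie(G_K)\le\dim_K\Lie(H_K)$. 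Combining this with the bound (1) and the elementary inequality $\dim G_K\le\dim_K\Lie(G_K)$, valid for any group scheme over a field with equality exactly when $G_K$ is smooth, gives the chain
$$
\dim G_K\ \ge\ \dim_K\Lie(H_K)\ \ge\ \dim_K\Lie(G_K)\ \ge\ \dim G_K,
$$
which forces all four quantities to be equal. In particular $G_K$ is smooth of some dimension $d$, and $d\phi_e$ is an isomorphism of Lie algebras.

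Since $d\phi_e$ is injective, $\ker\phi_K$ has trivial Lie algebra, hence is étale, and since $\phi$ is injective on $K$-points its only $K$-point is the identity; over an algebraically closed field this makes $\ker\phi_K$ trivial, so $\phi_K$ is a closed immersion. Then $\phi(G_K^0)$ is a closed connected subgroup of dimension $d$ sitting inside $H_K^0$, while $\dim H_K^0\le\dim_K\Lie(H_K)=d$, forcing $\phi(G_K^0)=H_K^0$. Now condition (3) enters: because $H_K^0$ is normal in $H_K$, its normalizer in $H(K)$ is all of $H(K)$, and the hypothesis $\phi(G^0(K))=H^0(K)$ yields $H(K)\subseteq\phi(G(K))$, i.e.\ $\phi_K$ is surjective on $K$-points. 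Finally $\dim H_K=\dim H_K^0=d=\dim_K\Lie(H_K)$ shows $H_K$ is smooth, hence reduced, so a closed immersion that is surjective on $K$-points is an isomorphism. Thus $\phi_K$ is an isomorphism for every algebraically closed $K$, and by faithfully flat descent of isomorphisms the fibre $\phi_s$ is an isomorphism at every $s\in\operatorname{Spec}\Int$.

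To globalize, observe that $G$ and $H$ are of finite type over the Noetherian base $\Int$, hence of finite presentation, and that $G$ is flat over $\Int$ by hypothesis. Since every fibre $\phi_s$ has just been shown to be an isomorphism, the fibre-wise criterion for finitely presented morphisms with flat source (EGA~IV, \S17.9, the version for isomorphisms) applies and shows that $\phi$ itself is an isomorphism of schemes; being a homomorphism, it is an isomorphism of affine group schemes over $\Int$. This is exactly the point where flatness of $G$ is indispensable: without it one cannot interpolate across the fibres of different characteristics.

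The step I expect to be the main obstacle is the field case, and within it the role of the dual-number part of condition (2). It is precisely the injectivity of $\phi$ on $K[\delta]$-points that controls the differential and, combined with the dimension bound (1), collapses the chain of inequalities and simultaneously delivers smoothness of \emph{both} $G_K$ and $H_K$ — a conclusion one could not reach from injectivity on ordinary $K$-points alone. The normalizer hypothesis (3) then performs only the residual task of turning the fibre-wise closed immersion into a surjection, and the passage from fibres to $\Int$ is comparatively formal once $G$ is flat.
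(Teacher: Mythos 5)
The paper offers no proof of this lemma at all --- it is quoted verbatim as Theorem~1.6.1 of \cite{Waterhouse_det} --- so your proposal can only be measured against Waterhouse's original argument, which it in fact reconstructs along essentially the same lines: the dual-number half of condition (2) makes $d\phi_e$ injective, condition (1) then collapses the chain $\dim G_K\ge\dim_K\Lie(H_K)\ge\dim_K\Lie(G_K)\ge\dim G_K$, the scheme-theoretic kernel is \'etale with a single $K$-point and hence trivial so that $\phi_K$ is a closed immersion, condition (3) upgrades this to surjectivity on $K$-points, and flatness of $G$ together with the fibrewise isomorphism criterion (EGA~IV, 17.9.5) globalizes the fibrewise statement to $\Int$. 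Your two external inputs --- trivial kernel over a field implies closed immersion, and the fibrewise criterion for finitely presented morphisms with flat source --- are exactly the right ones.

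Two points in your write-up need repair, though both are local. First, there is a circularity where you claim that $\dim\phi(G_K^0)=d$ and $\dim H_K^0\le d$ ``force'' $\phi(G_K^0)=H_K^0$: equality of dimensions forces equality of schemes only when the ambient scheme is irreducible and \emph{reduced}, which at that point you have not yet established; a closed subgroup of the same dimension as a connected non-reduced group can be proper (the trivial subgroup of $\mu_p$ in characteristic $p$). The fix is a reordering: the containment $\phi(G_K^0)\subseteq H_K^0$ alone gives $d\le\dim H_K\le\dim_K\Lie(H_K)=d$, hence $H_K$ is smooth, hence reduced with $H_K^0$ irreducible, and only then does the dimension count yield $\phi(G_K^0)=H_K^0$ as schemes --- your argument derives smoothness only after asserting the equality. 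Second, make explicit that the three hypotheses are required at \emph{every} algebraically closed field (at least one in each characteristic); this is how the statement must be read, and how you silently use it when passing to all fibres of $\operatorname{Spec}\Int$. With the ``some $K$'' reading the lemma is simply false: take $G$ trivial and $H=\operatorname{Spec}\bigl(\Int[x]/(2x,x^2)\bigr)$ with the additive group law; then $G$ is flat, all three conditions hold over $\overline{\mathbb Q}$, yet the fibre of $H$ at the prime $2$ is $\alpha_2\neq 1$. With these two repairs your proof is complete and is, in substance, Waterhouse's.
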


Here $G_0$ denotes the connected component of the identity in $G$,
$G_K$ denotes the scheme obtained from $G$ by a change of scalars,
and $\Lie(H_K)$ denotes the Lie algebra of the scheme $H_K$. Recall
that $K[\delta] = K[x]/(x^2)$ is the algebra of dual numbers over $K$.

Observe, that in our case the preliminary assumptions on the schemes
are satisfied automatically. All schemes considered are of finite type, 
being subschemes of appropriate $\GL_n$. Flatness follows from the fact 
that $G$ is connected and after the change of base to an algebraically
closed field, we will get smooth schemes of the same dimension. 
Thus, we only have to verify the three conditions of the above lemma.

%%%%%%%%%%%%%%%%%%%%%%%%%%%%%%%%%%%%%%%%%%%%%%%%%%%%%%%%%%%%%%%%%%%%%%%%

\section{The case of an algebraically closed field}

The following lemma summarizes obvious properties of the
minimal representation $\pi\colon\overline{G}(\EE_7,-) \to \GL_{56}$
of the simply connected Chevalley group of type $\EE_7$.
The fact that $\pi(\overline{G}(\EE_7,-))$ is irreducible and tensor 
indecomposable immediately follows from the fact that $\pi$ is
microweight. Its faithfulness follows from the equality of
weight lattices $\Lambda(\pi)=P(\Phi)$. The claim about
normalizers follows from the classical description of 
abstract automorphisms of Chevalley groups over fields 
(see~\cite{Stein_generators}).
Recall that this description asserts that any algebraic 
automorphism of an extended Chevalley group is the product of an inner
automorphism, a central automorphism and a graph automorphism.
The ordinary Chevalley group may have diagonal automorphisms,
but they become inner in the extended group. Modulo the
algebraic ones, the only 
non-algebraic automorphisms are field automorphisms.
Clearly, groups of type $\EE_7$ do not have any non-trivial
graph automorphisms.

\begin{lemma}\label{lem:irreducible}
Viewed as a subgroup of $\GL(56,K)$, the algebraic group 
$\overline G(\EE_7,K)$ is irreducible and tensor indecomposable.
Moreover, it is equal to its own normalizer. 
\end{lemma}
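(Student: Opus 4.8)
The plan is to establish the three asserted properties—irreducibility, tensor indecomposability, and self-normalization—by reducing each to the representation-theoretic input already available, namely that $\pi$ is a microweight (minuscule) representation with $\Lambda(\pi)=P(\Phi)$. First I would handle irreducibility: over an algebraically closed field $K$, a microweight representation has all weight multiplicities equal to $1$ and the Weyl group acts transitively on the weights, so the highest weight vector generates the whole module under the Chevalley group; since the representation remains irreducible in all characteristics (minuscule weights are a classical case where Weyl modules stay simple), $\overline G(\EE_7,K)$ acts irreducibly on $V=V(\fo7)$. For tensor indecomposability I would argue that the $56$-dimensional space cannot be written as $V_1\otimes V_2$ with $\dim V_i>1$ preserved by the group: a tensor decomposition would force the weights to split as a Minkowski sum $\Lambda=\Lambda_1+\Lambda_2$ compatible with the root action, and one checks that the weight diagram $(\EE_7,\fo7)$ admits no such factorization through a proper subsystem—equivalently, the only way $\overline G(\EE_7)$ could act tensor-decomposably is if it were a proper tensor product of smaller groups, which is excluded because $\EE_7$ is simple and the representation is minuscule (this is the standard consequence of irreducibility plus the simplicity of the underlying group).

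The substantive claim is the statement about normalizers, which I would deduce from the cited classical description of automorphisms of Chevalley groups over fields. The strategy is to take $g\in\GL(56,K)$ normalizing $\overline G(\EE_7,K)$; conjugation by $g$ then induces an algebraic automorphism $\varphi$ of the abstract group $\overline G(\EE_7,K)$. By the quoted theorem, $\varphi$ is a product of an inner, a central, and a graph automorphism (field automorphisms do not arise here because we restrict to \emph{algebraic} automorphisms induced by a linear map). Since $\EE_7$ has a trivial Dynkin-diagram automorphism group, the graph factor is absent. The central and diagonal contributions are exactly the content of passing to the extended group: any diagonal automorphism of the ordinary group $G(\EE_7,K)$ becomes inner inside $\overline G(\EE_7,K)$, so $\varphi$ is inner, realized by conjugation by some $h\in\overline G(\EE_7,K)$. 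Therefore $g h^{-1}$ centralizes $\overline G(\EE_7,K)$ in $\GL(56,K)$, and by Schur's lemma—using the irreducibility established above—$g h^{-1}$ is a scalar $\kappa\in K^*$. It then remains to observe that scalars lie in $\overline G(\EE_7,K)$, or to absorb them via the torus: the extended group contains the central torus acting by scalar-type weight elements, so $g=\kappa h\in\overline G(\EE_7,K)$, giving $N_{\GL(56,K)}(\overline G(\EE_7,K))=\overline G(\EE_7,K)$.

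The main obstacle I anticipate is the clean identification of the Schur scalar with an element of the extended group, together with making precise that the automorphism induced by a linear normalizing map is genuinely \emph{algebraic} rather than merely abstract. The first point is exactly why one works with $\overline G$ and not $G$: in the ordinary simply connected group the center is finite and scalars need not lie inside, whereas in the extended group the enlarged torus $T_{\sic}(\EE_8,K)$ supplies precisely the scalar (and diagonal) elements needed, via the weight element $h_{\fo7}(\eta)$ described in \S\ref{sec:extended_group} whose action is diagonal and contributes the central direction. The second point is handled by noting that conjugation by a fixed $g\in\GL(56,K)$ is a morphism of algebraic groups, hence the induced automorphism is algebraic, which is exactly the hypothesis under which the classification of \cite{Stein_generators} applies and rules out field automorphisms. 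Once these two bookkeeping issues are dispatched, the equality of the group with its own normalizer follows formally from irreducibility plus the automorphism description, as sketched.
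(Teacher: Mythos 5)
Your proof is correct and follows essentially the same route as the paper's (which is only sketched there): microweight $\Rightarrow$ irreducible and tensor indecomposable, then the classical description of algebraic automorphisms of Chevalley groups, absence of graph automorphisms for $\EE_7$, diagonal automorphisms becoming inner in the extended group, and Schur's lemma plus the fact that scalars lie in $\overline G(\EE_7,K)$ to conclude self-normalization. The one point to tighten is the central-automorphism factor in the classification: it is not a diagonal automorphism, but it vanishes upon restriction to the perfect elementary subgroup, after which your Schur argument applies verbatim.
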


Let us recall the general outline of the proof of the 
following lemma. It is {\it almost\/} the same as the proof of
Lemma 10 in \cite{Vavilov_Luzgarev_norme6}, but there is a minor
difference, due to the fact that now the extended Chevalley group
of type $\EE_7$ is not maximal in $\GL(n,K)$, but is contained
in the general symplectic group $\GSp(56,K)$. In a classical
1952 paper, Eugene Dynkin~\cite{Dynkin} described the maximal
connected closed subgroups of simple algebraic groups over an
algebraically closed field of characteristic 0. More
precisely, he reduced their description to the representation 
theory of simple algebraic groups. Relying on earlier results by 
Seitz himself, and by Donna Testerman, Gary Seitz~\cite{Seitz_maximal_classical}
generalized this description to subgroups of classical
algebraic groups over an arbitrary algebraically closed field.
Theorem~2 of~\cite{Seitz_maximal_classical} can be stated as 
follows. Let $V$ be the vector representation of $\SL(V)$, 
and $X$ be a proper simple algebraic subgroup of $\SL(n,K)$
such that the restriction $V|X$ of the module $V$ to $X$ is
irreducible and tensor indecomposable. Further, let $Y$ be a 
proper connected closed subgroup of $\SL(n,K)$, that strictly
contains $X$. Then either $Y=\Sp(V)$ or $Y=\SO(V)$, or else the
pair $(X,Y)$ is explicitly listed in
\cite[Table~1]{Seitz_maximal_classical}.

\begin{lemma}\label{lemma:g_i_field}
Theorem~$\ref{thm:g_i}$ holds for any algebraically closed field. 
\end{lemma}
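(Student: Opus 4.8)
The plan is to establish Theorem~\ref{thm:g_i} over an algebraically closed field $K$ by exhibiting a chain of inclusions $\overline G(\EE_7,K)\subseteq G_I(K)\subseteq\GSp(56,K)$ and then showing that the first inclusion is forced to be an equality by the subgroup structure theory of Dynkin and Seitz. First I would record the easy inclusion $\overline G(\EE_7,K)\subseteq G_I(K)$: by Theorem~\ref{thm:e7_in_fix} the elementary group $E(\EE_7,K)$ preserves the ideal $I$, and since over a field $\overline G(\EE_7,K)$ is generated by $E(\EE_7,K)$ together with the torus $T_{\sic}(\EE_8,K)$ and the weight elements $h_{\varpi_7}(\eta)$, it suffices to check additionally that these diagonal elements preserve $I$ — which is immediate from the explicit diagonal action recorded in~\S\ref{sec:extended_group} and the homogeneity of the generators $f_{\mu,\mu^*}$ and $g_i$.

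Next I would pin down $G_I(K)$ from above. The quadrics $g_\alpha$ encode, up to the nonzero structure constants $N_{\lambda,\overline\lambda}$, the symplectic form $h$ (these are exactly the monomials $x_\lambda x_{\overline\lambda}$ in weights at distance $3$), so preserving the ideal generated by the $g_i$ forces any $g\in G_I(K)$ to multiply $h$ by a scalar; hence $G_I(K)\subseteq\GSp(56,K)$. The main structural step is then to apply Lemma~\ref{lem:irreducible} and the Seitz classification. Take $X=G(\EE_7,K)$, which by Lemma~\ref{lem:irreducible} acts irreducibly and tensor-indecomposably on $V$, and let $Y=G_I(K)^0$ be the connected component, a closed connected subgroup of $\SL(56,K)$ (after passing to the derived group / intersecting with $\SL$) containing $X$. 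By Seitz's Theorem~2 either $Y$ is one of the explicitly tabulated overgroups in \cite[Table~1]{Seitz_maximal_classical}, or $Y\in\{\Sp(V),\SO(V)\}$. A dimension and representation-theoretic inspection rules out every entry of Table~1 for the pair $(\EE_7,\varpi_7)$, and the containment in $\GSp(56,K)$ together with $\Char$-independent nondegeneracy of $h$ excludes $\SO(V)$; the only surviving possibility is $Y=\Sp(V)$. But $\Sp(56,K)$ does not preserve the quartic form $f$ (its quadrics $f_{\mu,\mu^*}$ are genuinely present in $I$ and cut $\Sp$ down to $\EE_7$), so $Y$ cannot be all of $\Sp(V)$; hence $Y=X$, i.e.\ $G_I(K)^0=G(\EE_7,K)$.

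Finally I would upgrade this to $G_I(K)=\overline G(\EE_7,K)$ by a normalizer argument. Since $G_I(K)$ normalizes its own identity component $G(\EE_7,K)$, the last clause of Lemma~\ref{lem:irreducible} — that $\overline G(\EE_7,K)$ is its own normalizer in $\GL(56,K)$ — yields $G_I(K)\subseteq N_{\GL}(G(\EE_7,K))=\overline G(\EE_7,K)$, and combined with the inclusion from the first paragraph this gives equality over $K$.

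\begin{proof}
By Theorem~\ref{thm:e7_in_fix} we have $E(\EE_7,K)\subseteq G_I(K)$. Over a field $\overline G(\EE_7,K)$ is generated by $E(\EE_7,K)$ and the weight elements $h_{\varpi_7}(\eta)$, $\eta\in K^*$, and a direct check using the explicit diagonal action in \S\ref{sec:extended_group} shows these scale each homogeneous generator $f_{\mu,\mu^*}$, $g_i$ by a unit; hence $\overline G(\EE_7,K)\subseteq G_I(K)$.

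The generators $g_\alpha=\sum_{\lambda}N_{\lambda,\overline\lambda}x_\lambda x_{\overline\lambda}$ are, up to nonzero scalars, the polarizations of the symplectic form $h$, so any $g\in G_I(K)$ preserves the line spanned by $h$ and therefore lies in $\GSp(56,K)$. Let $Y=G_I(K)^0\cap\SL(56,K)$; this is a connected closed subgroup of $\SL(56,K)$ containing $X=G(\EE_7,K)$, which by Lemma~\ref{lem:irreducible} is simple and acts irreducibly and tensor-indecomposably. By Theorem~2 of \cite{Seitz_maximal_classical} either the pair $(X,Y)$ appears in \cite[Table~1]{Seitz_maximal_classical}, or $Y\in\{\Sp(V),\SO(V)\}$. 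Inspection of the tabulated pairs excludes $(\EE_7,\varpi_7)$; the containment $G_I(K)\subseteq\GSp(56,K)$ together with the alternating, nondegenerate nature of $h$ excludes $\SO(V)$; and $\Sp(V)$ does not preserve the quartic $f$, whose defining quadrics $f_{\mu,\mu^*}$ lie in $I$. Hence $Y=X$, so $G_I(K)^0=G(\EE_7,K)$.

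Finally, $G_I(K)$ normalizes its identity component $G(\EE_7,K)$, so by Lemma~\ref{lem:irreducible} it is contained in $N_{\GL(56,K)}(G(\EE_7,K))=\overline G(\EE_7,K)$. Together with the reverse inclusion this gives $G_I(K)=\overline G(\EE_7,K)$.
\end{proof}
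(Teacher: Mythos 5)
Your overall strategy is the same as the paper's (Theorem~\ref{thm:e7_in_fix} for one inclusion, Seitz's classification for maximality, self-normalization to conclude), but the central step of your second paragraph is wrong. You conclude $G_I(K)^0=G(\EE_7,K)$; this contradicts your own first paragraph. Indeed, $\overline G(\EE_7,K)=G(\EE_7,K)\cdot T_{\sic}(\EE_8,K)$ is connected of dimension $134$, and you have just shown it lies inside $G_I(K)$, so $G_I(K)^0\supseteq\overline G(\EE_7,K)\supsetneq G(\EE_7,K)$, the latter having dimension $133$. The mistake is the assertion that $Y=G_I(K)^0\cap\SL(56,K)$ is connected: this intersection contains $\overline G(\EE_7,K)\cap\SL(56,K)$, which is disconnected --- it is the preimage of $\mu_{28}\subseteq\mathbb G_m\cong\overline G(\EE_7,K)/G(\EE_7,K)$ under the quotient map, because $\det h_{\varpi_7}(\eta)=\eta^{-28}$ is a nontrivial character that kills $G(\EE_7,K)$. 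So Seitz's theorem (which requires a connected overgroup) cannot be applied to your $Y$; and even after replacing $Y$ by $\bigl(G_I(K)^0\cap\SL(56,K)\bigr)^0$, the inference ``$Y=G(\EE_7,K)$, hence $G_I(K)^0=G(\EE_7,K)$'' is false: the intersection with $\SL$ does not determine the group, and the identity component is in fact $\overline G(\EE_7,K)$. The paper argues one level up: by Seitz, $G(\EE_7,K)$ is maximal connected in $\Sp(56,K)$, so any connected closed subgroup properly containing $\overline G(\EE_7,K)$ must contain $\Sp(56,K)$ together with matrices of arbitrary determinant, hence all of $\GSp(56,K)$, which does not preserve $I$; therefore $G_I(K)^0=\overline G(\EE_7,K)$, and self-normalization (Lemma~\ref{lem:irreducible}) finishes. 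Your last paragraph can be salvaged along these lines ($G_I(K)$ normalizes the subgroup $\bigl(G_I(K)^0\cap\SL\bigr)^0$, and $N_{\GL(56,K)}(G(\EE_7,K))=\overline G(\EE_7,K)$ does hold), but note that Lemma~\ref{lem:irreducible} as stated is the self-normalization of $\overline G(\EE_7,K)$, not a statement about the normalizer of $G(\EE_7,K)$.

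Two further claims are incorrect as stated, though less damaging. First, your inclusion $G_I(K)\subseteq\GSp(56,K)$ rests on the assertion that the $g_\alpha$ are ``polarizations of the symplectic form $h$.'' This does not parse: $h$ is an alternating bilinear form (its associated quadratic form vanishes identically), while each $g_\alpha$ is a quadratic form supported on only the $12$ weights of $\Omega(\alpha)$; preservation of the ideal $I$ does not in any obvious way force preservation of $h$ up to scalar --- that implication is essentially part of what the lemma asserts, via $G_I\cong\overline G(\EE_7,-)\subseteq\GSp$. (You use this inclusion only to exclude $\SO(V)$; that exclusion should instead come from the absence of an invariant symmetric form on the $56$-dimensional module, or simply from $\SO(56,K)$ not preserving $I$.) Second, ``inspection of the tabulated pairs excludes $(\EE_7,\varpi_7)$'' is false: as the paper points out, $\EE_7$ occurs in column X of \cite[Table~1]{Seitz_maximal_classical} four times (cases II${}_6$--II${}_9$ in Dynkin's notation), each time in the embedding $\EE_7<\CC_{28}=\Sp(56)$. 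Fortunately these entries are subsumed by your $\Sp(V)$ case, so the case analysis survives, but the claim itself needs correction.
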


\begin{proof}
It suffices to prove that the connected components of the groups
in question coincide. Since $\overline G(\EE_7,K)$ coincides with
its normalizer, it will automatically follow that the group 
$G_I(K)$ is connected. The fact that $\overline G(\EE_7,K)$ 
stabilizes the requisite system of forms, follows from 
Theorem~\ref{thm:e7_in_fix}. The inverse inclusion can be 
established as follows. In Table~1 of~\cite{Seitz_maximal_classical} 
the group of type $\EE_7$ occurs in the column {\tt X} four 
times\footnote{Cases II${}_6$--II${}_9$ in Dynkin notation,
\cite{Dynkin}, no further inclusions arise in positive characteristics.} 
but each time in the embedding $\EE_7<\CC_{28}$. Formally, 
this only implies the maximality of $G(\EE_7,K)$ in $\Sp(56,K)$,
rather then the maximality of $\overline G(\EE_7,K)$ in
$\GSp(56,K)$. However, since $\det(h_{\varpi_7}(\eta))=\eta^{-28}$, 
for every algebraically closed field the determinant of 
$h_{\varpi_7}(\eta)$ can be arbitrary. Therefore, any connected
closed subgroup that properly contains $\overline G(\EE_7,K)$,
contains both $\Sp(56,K)$ and matrices of an arbitrary determinant,
and thus coincides with $\GSp(56,K)$. It only remains to observe that 
the group $\GSp(56,K)$ does not preserve the ideal $I$. Therefore,
$\overline G(\EE_7,K)$ is maximal among all such groups, and thus
coincides with $G_I(K)$.
\end{proof}

\begin{lemma}\label{lemma:g_fg_field}
Theorem~{\rm\ref{thm:g_fh}} holds for any algebraically closed field. 
\end{lemma}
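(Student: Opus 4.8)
The plan is to reduce the claim about the pair of forms $(f,h)$ to the already-established geometric characterization in Lemma~\ref{lemma:g_i_field}, since both statements concern the same algebraically closed field $K$ and the same group $\overline{G}(\EE_7,K)$. The key observation is that the isometry group $G_{(f,h)}(K)$ of the pair of forms and the stabilizer $G_I(K)$ of the ideal $I$ are closely related: the generators of $I$ are built (up to the symplectic contributions) from the second partial derivatives of the regular part of $f$, so that a linear transformation preserving $f$ and $h$ automatically preserves the ideal $I$. First I would verify the inclusion $G_{(f,h)}(K)\subseteq G_I(K)$ by a direct computation showing that the square equations $f_{\mu,\mu^*}$ and the equations $g_i$ lie in the span generated by applying derivatives to $f$ together with monomials coming from $h$; this exhibits the generators of $I$ as polynomial consequences of preservation of $(f,h)$.

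Next, for the reverse direction, I would use the fact from Lemma~\ref{lemma:g_i_field} that $G_I(K)=\overline{G}(\EE_7,K)$, which by Theorem~\ref{thm:e7_in_fix} and the construction of $f,h$ preserves $f$ up to a scalar $\varepsilon$ and the correction terms in $h$, and preserves $h$ up to a scalar $\varepsilon'$. Since $\overline{G}(\EE_7,K)$ is generated by $G(\EE_7,K)$ together with the weight elements $h_{\varpi_7}(\eta)$, and since $f,h$ are genuine invariants of $G(\EE_7,K)$ while the torus element $h_{\varpi_7}(\eta)$ scales them by explicit powers of $\eta$, I would compute the exact scaling factors. The determinant computation $\det(h_{\varpi_7}(\eta))=\eta^{-28}$ already appearing in the proof of Lemma~\ref{lemma:g_i_field}, together with the action displayed in \S\ref{sec:extended_group}, lets me read off that $h$ scales by $\varepsilon'=\eta^{\pm 2}$ and $f$ by $\varepsilon=\eta^{\pm 1}$, giving exactly the relation $\varepsilon'=\varepsilon^2$ predicted by Lemma~\ref{lemma:Petrov}. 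This confirms $\overline{G}(\EE_7,K)\subseteq\overline{G}_{(f,h)}(K)$.

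To finish, I would pin down both $G_{(f,h)}$ and $\overline{G}_{(f,h)}$ as full groups of points. For the non-extended form, an element $g\in G_{(f,h)}(K)$ preserves $f$ and $h$ \emph{on the nose} (with $\varepsilon=\varepsilon'=1$), hence lies in $G_I(K)=\overline{G}(\EE_7,K)$; but the weight elements $h_{\varpi_7}(\eta)$ with $\eta\neq\pm 1$ genuinely rescale the forms, so the only elements of $\overline{G}(\EE_7,K)$ that fix $f$ and $h$ exactly are those in $G(\EE_7,K)$ itself. This yields $G_{(f,h)}(K)=G(\EE_7,K)$. For the extended form, every $g\in\overline{G}_{(f,h)}(K)$ preserves $I$ (the scalar ambiguity in $f$ and the correction terms involving $h$ do not leave the ideal), so $\overline{G}_{(f,h)}(K)\subseteq G_I(K)=\overline{G}(\EE_7,K)$, and the reverse inclusion was established above. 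The main obstacle I anticipate is the first inclusion $G_{(f,h)}\subseteq G_I$: one must show carefully that \emph{every} generator of $I$ — both the $126$ square equations and the seven $g_i$ — arises as a combination of polarizations of $f$ and products of $h$, which requires unwinding the relationship between the structure constants $N_{\lambda,\mu}$ entering the $f_{\mu,\mu^*}$ and $g_\alpha$ and the coefficients $c(\lambda,\mu,\nu,\rho)$ defining $f$. Once that dictionary is in place, the scheme-theoretic statement over $\Int$ follows from the field case exactly as in Lemma~\ref{lem:Waterhouse}, by checking the three conditions there.
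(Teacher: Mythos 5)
Your strategy --- deducing the statement about $(f,h)$ from the already-proved Lemma~\ref{lemma:g_i_field} by comparing the stabilizer of the pair of forms with the stabilizer of the ideal $I$ --- is genuinely different from the paper's proof, which never relates $I$ to $(f,h)$ at all: it simply re-runs the maximality argument of Lemma~\ref{lemma:g_i_field} with the pair $(f,h)$ in place of $I$, quoting \cite{Luzgarev_e7_invariants} for the inclusion $G(\EE_7,K)=E(\EE_7,K)\subseteq G_{(f,h)}(K)$, checking that the torus $T(\EE_8,K)$ acts by similarities, and getting the reverse inclusions from Seitz's classification (any connected closed overgroup would contain $\GSp(56,K)$, which is not a group of similarities of $(f,h)$) together with the self-normalizing property of Lemma~\ref{lem:irreducible}. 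Your route, however, has a genuine gap at its very center: the inclusion $G_{(f,h)}(K)\subseteq G_I(K)$ rests entirely on the ``dictionary'' identifying the $133$ generators of $I$ with polarizations of $f$ (plus $h$-terms). You flag this yourself as the main obstacle, but it is never proved, and it is not available as a citation: the paper only asserts it heuristically in the introduction (``the bulk of our system of quadratic forms consists of the second partial derivatives of the [regular part of] the form $f$''), and establishing it rigorously --- with the correct structure constants, and uniformly in characteristic $2$ and $3$ --- is a computation of the same order as the whole of \S5. So the key step of your argument is deferred, not done.

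The second gap is worse: the step for the extended group would fail as written. Suppose $g\in\overline G_{(f,h)}(K)$ has similarity constants $\eps,c_2,c_3,c_4$, and suppose the dictionary holds, so that all polarizations $q_{a,b}(x)=f(x,x,a,b)$ lie in $I$. Taking $a=ge_\mu$, $b=ge_\nu$ and using $h(x,x)=0$, one gets
$$
q_{a,b}(gx)=\eps\, f(x,x,e_\mu,e_\nu)\pm(c_3+c_4)\,x_{\overline\mu}\,x_{\overline\nu},
$$
since $h(x,e_\mu)=\pm x_{\overline\mu}$. Choosing $\mu,\nu$ with $d(\overline\mu,\overline\nu)=1$, the last monomial occurs in no generator of $I$ (generators only involve weight pairs at distance $2$ or $3$), and the degree-two part of $I$ is exactly the span of the $133$ generators; hence $q_{a,b}(gx)\notin I$ unless $c_3+c_4=0$. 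So your parenthetical claim that ``the correction terms involving $h$ do not leave the ideal'' is false in general, and repairing it requires the vanishing of the $c_i$ --- which is precisely Lemma~\ref{lemma:Petrov}, deduced in the paper \emph{from} Theorem~\ref{thm:g_fh}; invoking it here would be circular. (A minor further slip: the torus computation gives that $f$ scales by $\eta^{\pm2}$ and $h$ by $\eta^{\pm1}$ --- compare scalar matrices, where $f$ scales by $c^4$ and $h$ by $c^2$ --- so the relation is $\eps=(\eps')^2$ rather than $\eps'=\eps^2$; this does not hurt your step (c), since exact preservation of $h$ already forces $\eta=1$.)
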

\begin{proof}
Completely analogous to the proof of Lemma~\ref{lemma:g_i_field},
only that instead the reference to Theorem~\ref{thm:e7_in_fix}
one should invoke results of~\cite{Luzgarev_e7_invariants}, where
it is proven that $E(\EE_7,R)$ stabilizes the pair $(f,h)$. Therefore,
$G(\EE_7,K) = E(\EE_7,K)$ is contained in $G_{(f,h)}(K)$.
Moreover, it is easy to see that elements of the maximal torus
$T(\EE_8,K)$ act as similarities of the pair $(f,h)$, and thus
$\overline G(\EE_7,K)$ is contained in $\overline G_{(f,h)}(K)$.
\end{proof}

%%%%%%%%%%%%%%%%%%%%%%%%%%%%%%%%%%%%%%%%%%%%%%%%%%%%%%%%%%%%%%%%%%%%%%%

\section{Dimension of the Lie algebra}

In the present section we proceed with the proofs of Theorems~\ref{thm:g_i}
and~\ref{thm:g_fh}. Namely, here we prove that the affine group schemes 
$G_I$, $G_{(f,h)}$, $\overline G_{(f,h)}$ are smooth. This is one of the
key calculations in the present paper.

First, consider $G_I$. We should evaluate the dimension of the Lie algebra
of this scheme. It is well known how to calculate the Lie algebra that
stabilizes a system of forms, see, for instance~\cite{Jacobson_Jordan}).
Of course, before the advent of the theory of group schemes, in positive
characteristic it was not possible to derive any information concerning
the group stabilizing the same system of forms. Morally, our calculation
faithfully imitates the works by William Waterhouse, 
especially~\cite{Waterhouse_det}, where such similar calculations were 
performed in Lemmas 3.2, 5.3 and 6.3. Analogous calculations for the
cases of polyvector representation of $\GL_n$, and for the microweight
representation of $\EE_6$ were carried through 
in~\cite{Vavilov_Perelman, Vavilov_Luzgarev_norme6}.

Let, as above, $K$ be a field. The Lie algebra 
$\Lie((G_I)_K)$ of an affine group scheme $(G_I)_K$ is most naturally 
interpreted as the kernel of the homomorphism
$G_I(K[\delta])\longrightarrow G_I(K)$, sending $\delta$ to 0 
see~\cite{Borelrus, Humphreys_LAG, Vavilov_A3}. Let $G$ be a
subscheme of $\GL_n$. Then $\Lie(G_K)$ consists of all matrices
of the form $e+z\delta$, where $z\in M(n,K)$, satisfying the equations
defining $G(K)$. In the next lemma we specialize this statement in the
case where $G$ is the stabilizer of a system of polynomials. 

\begin{lemma}
Let $f_1,\ldots,f_s\in K[x_1,\ldots,x_t]$.
Then a matrix $e+z\delta$, where $z\in M(t,K)$, belongs to
$\Lie(\Fix_K(f_1,\ldots,f_s))$ if and only if
$$ 
\sum_{1\le i,j\le t}z_{ij}x_i \frac{\partial f_h}{\partial x_j}=0,
$$
for all $h=1,\ldots,s$.
\end{lemma}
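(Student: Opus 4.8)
The plan is to unwind the definition of the Lie algebra as the kernel of the reduction homomorphism $\GL(t,K[\delta])\to\GL(t,K)$, and then to linearize the fixing condition over the dual numbers $K[\delta]$. By the description recalled just above, $\Lie(\Fix_K(f_1,\ldots,f_s))$ consists precisely of those matrices $e+z\delta$, with $z\in M(t,K)$, which, viewed as elements of $\GL(t,K[\delta])$, fix each of the polynomials $f_1,\ldots,f_s$; that is, $g=e+z\delta$ must satisfy $f_h(gx)=f_h(x)$ in $K[\delta][x_1,\ldots,x_t]$ for every $h$. I would first note that $e+z\delta$ is automatically invertible, its inverse being $e-z\delta$ because $\delta^2=0$, so membership in $\GL(t,K[\delta])$ is no constraint and the only genuine requirement is the fixing condition itself.

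Next I would make this condition explicit. The coordinates of $gx$ are $(gx)_i=x_i+\delta\sum_j z_{ij}x_j$, so substituting into $f_h$ and expanding by the chain rule, with $\delta^2=0$ annihilating all terms of order at least $2$ in $\delta$, gives
$$
f_h(gx)=f_h(x)+\delta\sum_{1\le i,j\le t}z_{ij}x_i\frac{\partial f_h}{\partial x_j},
$$
where the coefficient of $\delta$ is the directional derivative of $f_h$ along the linear vector field determined by $z$. Since $\{1,\delta\}$ is a $K$-basis of $K[\delta]$, an equality of polynomials over $K[\delta]$ amounts to the separate equality of its $1$- and $\delta$-components over $K$; hence $f_h(gx)=f_h(x)$ holds exactly when the displayed $\delta$-coefficient vanishes identically in $K[x_1,\ldots,x_t]$. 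Imposing this for all $h=1,\ldots,s$ yields precisely the stated system, and since the whole argument is a chain of equivalences it proves both directions of the ``if and only if'' simultaneously.

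The computation is routine and there is no serious obstacle; the points that require attention are purely matters of bookkeeping. I must perform the single chain-rule expansion carefully and fix the index placement in the first-order term in accordance with the convention, used consistently throughout the paper, for the action of $g$ on the coordinate column, so that it reads $\sum_{i,j}z_{ij}x_i\,\partial f_h/\partial x_j$ and not its transpose. I should also record explicitly that comparison of the coefficients of $\delta$ is legitimate because $K[\delta]$ is free as a $K$-module on $\{1,\delta\}$, which is exactly what converts the single fixing identity into the desired linear condition on the entries of $z$.
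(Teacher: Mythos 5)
Your argument is correct and coincides with the paper's own (essentially omitted) justification: the lemma is stated there without proof, as an immediate specialization of the preceding description of $\Lie(G_K)$ as the kernel of $G(K[\delta])\to G(K)$, and your dual-number expansion --- automatic invertibility of $e+z\delta$, extraction of the $\delta$-coefficient by the chain rule, and comparison of coefficients using that $K[\delta]$ is free on $\{1,\delta\}$ --- is exactly that computation. The only wrinkle is the one you flag yourself: with your column convention $(gx)_i=x_i+\delta\sum_j z_{ij}x_j$ the chain rule literally yields $\sum_{i,j}z_{ij}x_j\,\partial f_h/\partial x_i$, the transpose of the displayed sum, so the stated form corresponds to the substitution $x_j\mapsto x_j+\delta\sum_i z_{ij}x_i$; since $z\mapsto z^t$ is a linear bijection of $M(t,K)$, this is a harmless relabeling that affects nothing downstream.
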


The following result is proved in exactly the same way as 
Lemma~5.3 of~\cite{Waterhouse_det}, and as Theorem~4 
of~\cite{Vavilov_Luzgarev_norme6}. 
Clearly, the dimension that arises in this proof, is the 
dimension of the Lie algebra of type $\EE_7$ increased by 1. 
As also in \cite{Vavilov_Luzgarev_norme6}, from the proof it will 
be clear, which of the coefficients $z_{\lambda\mu}$ correspond
to roots, and which correspond to the Cartan subalgebra. 
The extra~1 is accounted by an additional toral summand,
since the Lie algebra we consider is in fact the Lie algebra
of the {\it extended\/} Chevalley group, whose dimension 
exceeds dimension of the ordinary Chevalley group by 1. 

\begin{theorem}\label{thm:Lie_dimension}
For any field $K$ the dimension of the Lie algebra $\Lie(G_I(K))$ 
does not exceed $134$.
\end{theorem}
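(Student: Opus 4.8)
The plan is to bound $\dim \Lie(G_I(K))$ from above by explicitly solving the linear system produced by the previous lemma. By that lemma, a matrix $e + z\delta$ lies in $\Lie(G_I(K))$ precisely when
$$
\sum_{1 \le i,j \le t} z_{ij} x_i \frac{\partial f_h}{\partial x_j} = 0
$$
for every generator $f_h$ of the ideal $I$, where here the indices range over the $56$ weights $\Lambda$ and $t = 56$. So I would write $z = (z_{\lambda\mu})_{\lambda,\mu \in \Lambda}$ and regard the displayed condition, for each of the $133$ generators $f_{\rho,\sigma}$ and $g_i$, as a collection of linear constraints on the entries $z_{\lambda\mu}$. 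Since each $f_h$ is quadratic, the left-hand side is a cubic form in the $x_\lambda$, and setting each of its monomial coefficients to zero yields the actual scalar equations on $z$. The goal is to show that the solution space has dimension at most $134 = \dim \Lie(\EE_7) + 1 = 133 + 1$.

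First I would exploit the block structure coming from the weight diagram. The partial derivatives $\partial f_{\rho,\sigma}/\partial x_\mu$ and $\partial g_i/\partial x_\mu$ are supported on specific weights dictated by the combinatorics of maximal squares $\Omega(\alpha)$ and of the distance-$3$ pairs $\{\lambda,\overline\lambda\}$; in particular $f_{\rho,\sigma}$ only involves monomials $x_\nu x_\rho$ with $d(\nu,\rho)=2$ and $\nu+\rho$ fixed, while $g_\alpha$ only involves the distance-$3$ monomials $x_\lambda x_{\overline\lambda}$. Substituting these into the displayed identity and collecting the coefficient of each cubic monomial $x_\lambda x_\mu x_\nu$ separates the constraints according to the weight-graph distances among $\lambda,\mu,\nu$. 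The off-diagonal entries $z_{\lambda\mu}$ with $\lambda \ne \mu$ should get pinned down to be supported only on pairs $\lambda - \mu \in \EE_7$ (i.e.\ genuine root directions), each such entry being forced to match a single structure-constant-weighted parameter, which accounts for the $126$ root coordinates of the Lie algebra. The diagonal entries $z_{\lambda\lambda}$, constrained by the square and the $g_i$ equations, should be forced to lie in an $8$-dimensional space (a $7$-dimensional toral part from $\EE_7$, plus the extra toral summand from the extended group), giving the total $126 + 8 = 134$.

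Concretely, the bookkeeping is the heart of the argument: for a fixed cubic monomial I would determine exactly which $(h,j)$ pairs contribute, read off the resulting linear relation among the $z_{\lambda\mu}$, and verify that collectively these relations cut the space of $z$ down to the asserted dimension. This mirrors Lemma~5.3 of~\cite{Waterhouse_det} and Theorem~4 of~\cite{Vavilov_Luzgarev_norme6}, and as noted there the proof simultaneously reveals which coordinates $z_{\lambda\mu}$ correspond to root elements and which to the Cartan subalgebra. The relations among the structure constants $N_{\alpha\beta}$ recalled in \S4 (antisymmetry, the $N_{\alpha\beta}=N_{\beta\gamma}=N_{\gamma\alpha}$ identity, and the cocycle identity) are exactly what I would use to check that the off-diagonal parameters are consistent and genuinely free, reproducing the $\EE_7$ root system.

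The main obstacle I anticipate is the diagonal part. Showing that the off-diagonal entries are confined to root directions is relatively mechanical once the monomial bookkeeping is set up, since each distance-$2$ or distance-$1$ constraint localizes cleanly. But pinning the diagonal entries to an exactly $8$-dimensional toral space requires using both families of generators together: the square equations $f_{\rho,\sigma}$ force linear relations among the $z_{\lambda\lambda}$ indexed by the $126$ maximal squares, while the $g_i$ equations impose further constraints tying together the distance-$3$ pairs. Disentangling these to see that precisely $8$ independent toral parameters survive --- neither collapsing to $7$ nor allowing spurious extra dimensions --- is where the delicate rank computation lies, and it is exactly the point at which the \emph{extended} nature of the group (the $+1$ over $\dim\EE_7$) manifests itself. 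I would verify independence here by exhibiting an explicit spanning set of diagonal solutions and matching it against the known toral structure of $\overline G_{\sic}(\EE_7,-)$ recorded via the action of $h_{\varpi_7}(\eta)$ in \S2.
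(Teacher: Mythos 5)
Your skeleton --- linearize, sort the constraints on the entries $z_{\lambda\mu}$ by the weight-graph distance $d(\lambda,\mu)$, kill the entries at distance $\geq 2$, extract one parameter per root of $\EE_7$ from the distance-$1$ entries, and at most $8$ parameters from the diagonal, for a total of $126+8=134$ --- is exactly the skeleton of the paper's proof. But there is a genuine gap at your very first step, and it would make the computation fail. You take the linearized condition to be $\sum_{\lambda,\mu}z_{\lambda\mu}x_\lambda\,\partial f_h/\partial x_\mu=0$ for each generator $f_h$, and you explicitly propose to ``set each of its monomial coefficients to zero''. That is the Lie-algebra condition for the group fixing each of the $133$ polynomials \emph{individually}; it is not the condition for $\Lie(G_I(K))$, because $G_I$ only preserves the \emph{ideal} $I$. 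Writing $g=e+z\delta$, one has $f_h(gx)=f_h(x)+\delta\,D_zf_h(x)$ with $D_zf_h=\sum z_{\lambda\mu}x_\lambda\,\partial f_h/\partial x_\mu$, and the requirement $f_h(gx)\in I\otimes K[\delta]$ amounts to $D_zf_h\in I$, i.e.\ $D_zf_h$ lies in the $K$-span of the $133$ generators (note also that $D_zf_h$ is quadratic, not cubic as you write; the monomials to compare are $x_\nu x_\rho$, not $x_\lambda x_\mu x_\nu$). The distinction is fatal for your plan: by the computations of \S5, a root element $e_\gamma$ satisfies, for suitable generators, $D_{e_\gamma}f_{\rho,\sigma}=\pm N_{\gamma,\rho-\gamma}\,f_{\rho-\gamma,\sigma}\neq 0$, so under your strict condition all $126$ root directions are excluded --- even though they certainly lie in $\Lie(G_I(K))$, since $x_\gamma(\delta\xi)=e+\delta\xi e_\gamma$ belongs to $E(\EE_7,K[\delta])\subseteq G_I(K[\delta])$ by Theorem~\ref{thm:e7_in_fix}. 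Hence the space cut out by your equations is a proper subspace of $\Lie(G_I(K))$ missing the entire root part, and bounding \emph{its} dimension proves nothing about $\Lie(G_I(K))$. (The trap is partly set by the paper's own preliminary lemma, which is phrased for the simultaneous stabilizer of the polynomials; for the ideal stabilizer one must work modulo the span of the generators, which is what the paper's proof actually does.)

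Once the condition is corrected, the bookkeeping you describe goes through and is the paper's argument: the coefficient of any monomial occurring in \emph{no} generator must vanish --- this kills $z_{\lambda\mu}$ for $d(\lambda,\mu)=3$ (via the monomial $x_\lambda^2$) and $d(\lambda,\mu)=2$ (via $x_\lambda x_{\overline\mu}$, after a suitable choice of $\alpha$ to rule out cancellation), and forces $z_{\lambda\mu}=\pm z_{\nu\rho}$ whenever $\lambda-\mu=\nu-\rho$ is a root --- whereas coefficients of monomials occurring in a single generator need only be \emph{proportional} to that generator's coefficients, which yields the diagonal relations $z_{\lambda\lambda}\pm z_{\rho\rho}=\pm z_{\mu\mu}\pm z_{\nu\nu}$. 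Finally, note that the theorem asks only for an upper bound: your worry about the diagonal ``collapsing to $7$'' and your plan to exhibit an independent spanning set of toral solutions are unnecessary. It suffices that the relations above express every $z_{\lambda\lambda}$ as a linear combination of $z_{\mu_i\mu_i}$ for eight weights $\mu_1,\dots,\mu_8$ whose pairwise differences realize all fundamental roots; exactness of the count $134$ is never needed for this statement.
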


\begin{proof}
Our equations take the form
\begin{align*}
\sum_{\lambda,\mu} z_{\lambda\mu}x_\lambda
\frac{\partial f_{\rho,\sigma}}{\partial x_\mu}&=0, 
\quad \rho,\sigma\in\Lambda,\;\rho\perp\sigma;
\\
\sum_{\lambda,\mu} z_{\lambda\mu}x_\lambda
\frac{\partial g_\alpha}{\partial x_\mu} &=0,
\quad\alpha\in\Phi.
\end{align*}
Recall, that the partial derivatives look as follows:
\begin{align*}
\frac{\partial f_{\rho,\sigma}}{\partial x_\mu} &=
\begin{cases}
\pm x_{\rho+\sigma-\mu}, &\text{if $\rho+\sigma-\mu\in\Lambda$,}\\
0,                       &\text{otherwise.}
\end{cases}\\
\frac{\partial g_\alpha}{\partial x_\mu} &=
\begin{cases}
\pm x_{\overline\mu}, &\text{if $\mu+\alpha\in\Lambda$ of
$\mu-\alpha\in\Lambda$,}\\
0,                    &\text{otherwise.}
\end{cases}
\end{align*}

\begin{itemize}
\item If $d(\lambda,\mu)=3$, then $z_{\lambda\mu}=0$. Indeed, in this case
  $\mu=\overline\lambda$. There exists a root $\alpha\in\Phi$ such that
  $\overline\lambda+\alpha\in\Lambda$ or $\overline\lambda-\alpha\in\Lambda$.
  Consider the equation corresponding to the polynomial $g_\alpha$. It 
  features the monomial
  $z_{\lambda\overline\lambda}x_\lambda
  \frac{\partial g_\alpha}{\partial x_{\overline\lambda}} =
  \pm z_{\lambda\overline\lambda}x_\lambda^2$. However, the monomial 
  $x_\lambda^2$ does not occur in any generator of the ideal
  $I$. It follows that the coefficient
  $\pm z_{\lambda\overline\lambda}$ must be 0.
\item If $d(\lambda,\mu)=2$, then $z_{\lambda\mu}=0$. Choose a root
  $\alpha\in\Phi$ such that $\mu+\alpha\in\Lambda$ or $\mu-\alpha\in\Lambda$.
  Consider the equation corresponding to the polynomial $g_\alpha$. It
  features the monomial
  $z_{\lambda\mu}x_\lambda\frac{\partial g_\alpha}{\partial x_\mu} = 
  \pm z_{\lambda\mu}x_\lambda x_{\overline\mu}$, where
  $d(\lambda,\overline\mu)=1$. Thus, the monomial $x_\lambda x_{\overline\mu}$ 
  does not occur in any generator of the ideal $I$. The non-zero summand
  $\pm z_{\lambda\mu}x_\lambda x_{\overline\mu}$ could only possible cancel with 
  the non-zero summand of the form
  $z_{\overline\mu\overline\lambda}x_{\overline\mu}\frac{\partial g_\alpha}{\partial
    x_{\overline\lambda}}$. However, varying $\alpha$ one can guarantee that both
  $\overline\lambda+\alpha\notin\Lambda$ and
  $\overline\lambda-\alpha\notin\Lambda$. For instance, by the transitivity
  of the Weyl groups on pairs of weights at distance 2, one can assume that
  $\lambda=\varpi_7$ and $\mu=\overline\varpi_7+\alpha_7$, in which case one
  can take $\alpha=\alpha_6$. For this choice of $\alpha$ the summand 
  $\pm z_{\lambda\mu} x_\lambda x_{\overline\mu}$ remains the summand containing
  $x_\lambda x_{\overline\mu}$, and thus $z_{\lambda\mu}=0$.
\item If $d(\lambda,\mu) = d(\nu,\rho) = 1$ and $\lambda-\mu=\nu-\rho$, then
  $z_{\lambda\mu}=\pm z_{\nu\rho}$.
  First, assume that $(\nu,\rho)\neq (\overline\mu,\overline\lambda)$.
  In this case, $\lambda\perp\rho$, $\mu\perp\nu$ and $\lambda+\rho=\mu+\nu$.
  Consider the equation corresponding to the polynomial $f_{\lambda,\rho} =
  f_{\mu,\nu}$. It features the monomials
  $z_{\lambda\mu}x_\lambda\frac{\partial f_{\lambda,\rho}}{\partial x_\mu} =
  \pm z_{\lambda\mu}x_\lambda x_\nu$ and
  $z_{\nu\rho}x_\nu\frac{\partial f_{\lambda,\rho}}{\partial x_\rho} =
  \pm z_{\nu\rho}x_\nu x_\lambda$.
  However, $d(\lambda,\nu)=1$, so that the monomial 
  $x_\lambda x_\nu$ does not occur in any generator of the ideal 
  $I$. This means that these two monomials must sum to $0$, so that
  $z_{\lambda\mu} = \pm z_{\nu\rho}$.
\item If $d(\lambda,\mu) = d(\nu,\rho) = 1$ and $\lambda-\mu = \nu-\rho$, then
  $z_{\rho\rho} = \pm z_{\lambda\lambda} \pm z_{\mu\mu} \pm
  z_{\nu\nu}$. Indeed, the equation corresponding to the same polynomial 
  $f_{\lambda,\rho} = f_{\mu,\nu}$, as in the preceding item, features
  monomials 
  $z_{\lambda\lambda}x_\lambda\frac{\partial f_{\lambda,\rho}}{\partial x_\lambda}
  = \pm z_{\lambda\lambda}x_\lambda x_\rho$,
  $z_{\mu\mu}x_\mu\frac{\partial f_{\lambda,\rho}}{\partial x_\mu}
  = \pm z_{\mu\mu}x_\mu x_\nu$,
  $z_{\nu\nu}x_\nu\frac{\partial f_{\lambda,\rho}}{\partial x_\nu}
  = \pm z_{\nu\nu}x_\nu x_\mu$, and
  $z_{\rho\rho}x_\rho\frac{\partial f_{\lambda,\rho}}{\partial x_\rho}
  = \pm z_{\rho\rho}x_\rho x_\lambda$. Observe, that the monomial 
  $x_\lambda x_\rho$ occurs in exactly one of the generators of $I$, viz. in 
  $f_{\lambda,\rho}$), and $x_\mu x_\nu$ occurs in the same polynomial, with the
  same coefficient, up to sign. Equating the coefficients of the above
  monomials, we see
  $z_{\lambda\lambda}\pm z_{\rho\rho} = \pm z_{\mu\mu} \pm z_{\nu\nu}$.
\end{itemize}

Let us summarize what we have just established. The first two items 
show that the matrix entries $z_{\lambda\mu}$ with $d(\lambda,\mu)\geq 2$ 
do not contribute to the dimension of the Lie algebra, whereas the 
entries $z_{\lambda\mu}$ with $d(\lambda,\mu) = 1$ give the contribution
equal to the number of roots of $\Phi$, namely, $126$. Finally, the last 
item allows us to express all entries $z_{\lambda\lambda}$ as linear
combinations of the entries $z_{\mu\mu}$, for $\mu=\mu_1,\dots,\mu_t$, 
such that each fundamental root of $\Phi$ occurs among the pair-wise
differences of the weights $\mu_i$. It is easy to see that the
smallest
number of such weights is 8, and that one can use the weights 
$1,2,3,4,5,6,7,8$ as such. Figure~3 shows their location in the weight diagram. 

\begin{figure}[h]
\begin{center}
\includegraphics{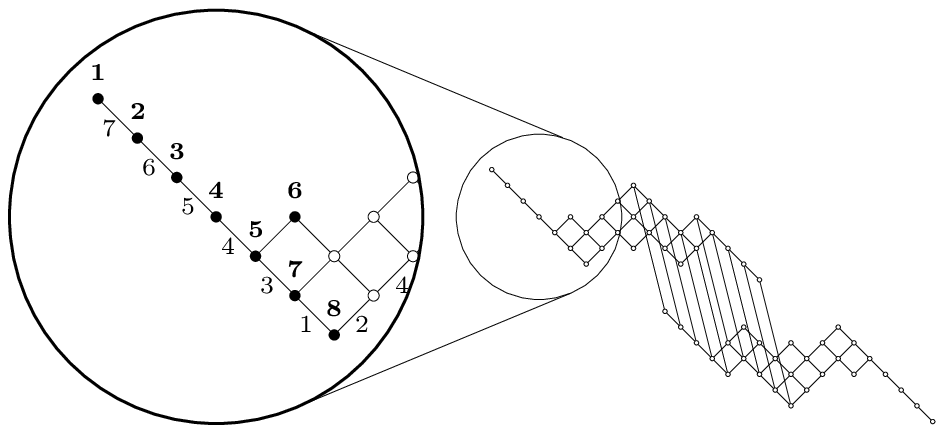}
\caption{}
\end{center}
\end{figure}

Thus, the dimension of the Lie algebra does not exceed $126+8=134$.
\end{proof}

Next, we pass to the schemes $G_{(f,h)}$ and $\overline G_{(f,h)}$.
As above, we can identify the Lie algebras
$\Lie(G_{(f,h)}(K))$ and $\Lie(\overline{G}_{(f,h)}(K))$ with the
kernels of homomorphisms obtained by specializing $\delta$ in the
ring of dual numbers $K[\delta]$ to 0. Thus, 
$\Lie(G_{(f,h)}(K))$ consists of the matrices $g=e+x\delta$,
where $x\in M(n,K)$, satisfying the following conditions:
$f(gu,gv,gw,gz) = f(u,v,w,z)$ and $h(gu,gv) = h(u,v)$,
for all $u,v,w,z\in V$.
Similarly, $\Lie(\overline{G}_{(f,h)}(K))$ consists of all matrices 
$g=e+x\delta$, where $x\in M(n,K)$, satisfying the conditions
$f(gu,gv,gw,gz) = \eps(g)f(u,v,w,z)$ and $h(gu,gv) = \eps^2(g)h(u,v)$
for all $u,v,w,z\in V$.

\begin{theorem}\label{thm:Lie_dimension_fh}
For any field $K$ the dimension of the Lie algebra $\Lie(G_{(f,h)}(K))$
does not exceed $133$, while the dimension of the Lie algebra 
$\Lie(\overline{G}_{(f,h)}(K))$ does not exceed $134$.
\end{theorem}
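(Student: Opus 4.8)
The plan is to compute both Lie algebras directly, exactly as in the proof of Theorem~\ref{thm:Lie_dimension}, but now reading the conditions off the forms $h$ and $f$ rather than off the quadrics. Substituting $g=e+x\delta$ with $x=(z_{\lambda\mu})$ into the two defining relations and discarding $\delta^2$, one finds that $x\in\Lie(\overline G_{(f,h)}(K))$ if and only if there is a scalar $a=a(x)\in K$ with
\begin{align*}
h(xu,v)+h(u,xv) &= 2a\,h(u,v),\\
f(xu,v,w,z)+f(u,xv,w,z)+f(u,v,xw,z)+f(u,v,w,xz) &= a\,f(u,v,w,z),
\end{align*}
for all $u,v,w,z\in V$, and $x\in\Lie(G_{(f,h)}(K))$ precisely when moreover $a=0$. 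Hence $\Lie(G_{(f,h)}(K))$ is the kernel of the linear functional $x\mapsto a(x)$ on $\Lie(\overline G_{(f,h)}(K))$, and it suffices to prove $\dim\Lie(\overline G_{(f,h)}(K))\le 134$ together with the fact that $a$ is not identically zero.

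First I would bound the similitude algebra. The first relation says precisely that $x$ lies in the symplectic similitude algebra attached to $h$, which pairs $\lambda$ with $\overline\lambda$ and forces the symmetry $z_{\lambda\mu}=\pm z_{\overline\mu\,\overline\lambda}$. Writing the second relation on the admissible basis through $f(v^\lambda,v^\mu,v^\nu,v^\rho)=c(\lambda,\mu,\nu,\rho)$ and comparing the coefficient of each monomial, I expect to reproduce the four cases of Theorem~\ref{thm:Lie_dimension}, now with $f$ playing the role of the square equations and $h$ playing the role of the $g_\alpha$: the entries $z_{\lambda\mu}$ with $d(\lambda,\mu)=2$ or $3$ are forced to vanish, the entries with $d(\lambda,\mu)=1$ contribute one free parameter per root of $\EE_7$ (that is, $126$), and the diagonal entries $z_{\lambda\lambda}$, together with the multiplier $a$, are tied into an $8$-dimensional space. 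This yields $126+8=134$.

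It then remains to split off the ordinary case. The functional $a$ is visibly nonzero on the similitude algebra: the infinitesimal generator $\dot h_{\varpi_7}=\frac{d}{d\eta}\big|_{\eta=1}h_{\varpi_7}(\eta)$ is the diagonal matrix whose entries are the exponents recorded in \S\ref{sec:extended_group}, and their sum is $\pm 28\neq 0$; since every $x$ with $a=0$ lies in $\mathfrak{sp}(56,K)$ and is therefore traceless, we must have $a(\dot h_{\varpi_7})\neq 0$. Consequently $\Lie(G_{(f,h)}(K))=\ker a$ has dimension exactly one less, so $\dim\Lie(G_{(f,h)}(K))\le 133$. Equivalently, imposing $a=0$ in the diagonal analysis of the previous paragraph cuts the $8$-dimensional diagonal space down to the $7$-dimensional Cartan subalgebra of $\mathfrak e_7$.

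The step I expect to be genuinely delicate is the distance analysis for $f$. In contrast with the symmetric three-linear form of the $\EE_6$ case, here $f$ is not symmetric and is defined through the iterated bracket $[[[[e_{-\delta},e_\lambda],e_\mu],e_\nu],e_\rho]$ in the Lie algebra of type $\EE_8$; the derivation identity therefore involves four inequivalent contractions and a sign bookkeeping governed by the structure constants $N_{\alpha\beta}$ and the cocycle identity, rather than by a single symmetric coefficient. The two points requiring the most care will be checking that no spurious entries at distance $2$ or $3$ survive in the symplectic similitude algebra beyond those already present for $G_I$, and isolating the single diagonal direction --- realized by $\dot h_{\varpi_7}$ --- that separates the isometry algebra from the similitude algebra and accounts for the difference between the bounds $133$ and $134$.
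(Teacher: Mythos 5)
Your overall strategy (linearize, rerun the distance-by-distance analysis of Theorem~\ref{thm:Lie_dimension}, then cut the diagonal block down by one further relation) is the same as the paper's, but two of your steps have genuine gaps. First, your starting characterization of $\Lie(\overline G_{(f,h)}(K))$ --- a single scalar $a$ with $f$-derivation $=a\,f$ and $h$-derivation $=2a\,h$, and no further terms --- is not the definition of $\overline G_{(f,h)}$; it is the simplified description of Lemma~\ref{lemma:Petrov}, which the paper obtains only as a corollary of Theorem~\ref{thm:g_fh}, which in turn rests on the present theorem. At this point of the argument one must linearize the actual definition, with independent multipliers $\eps,\eps'$ and the correction terms $c_2h(u,v)h(w,z)+c_3h(u,w)h(v,z)+c_4h(u,z)h(v,w)$. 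Your restricted conditions cut out a subspace of the true Lie algebra (indeed a priori a proper one: checking on $h_{\varpi_7}(\eta)$ shows the $f$-multiplier is the \emph{square} of the $h$-multiplier, not the other way around), and an upper bound for a subspace says nothing about the ambient space, so the bound $134$ is not established. The paper keeps all five scalars and disposes of the $c_i$-terms by always evaluating on quadruples of basis vectors no two of whose weights are opposite, so that every product $h(\cdot,\cdot)h(\cdot,\cdot)$ vanishes; that, together with actually carrying out the four distance cases (which you explicitly defer as ``genuinely delicate''), is the real content of the $134$ bound.

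Second, your mechanism for separating $133$ from $134$ fails in small characteristics, while the theorem is asserted for \emph{every} field. You argue that $a(\dot h_{\varpi_7})\neq 0$ because $\tr\dot h_{\varpi_7}=\pm 28\neq 0$ and elements with $a=0$ lie in $\mathfrak{sp}(56,K)$, hence are traceless; but $28=0$ in characteristics $2$ and $7$, so there the argument proves nothing. In characteristic $2$ the claim itself is false: since the monomials of $f$ are supported on quadruples of weights summing to $2\delta$, the multiplier of $f$ on $h_{\varpi_7}(\eta)$ is $\eta^{\pm 2}$, so the linearized $f$-multiplier of $\dot h_{\varpi_7}$ equals $\pm 2=0$; in characteristic $2$ the similitude direction is detected only by the $h$-multiplier, not by your functional $a$. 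The paper's proof is characteristic-free: it plugs $u=e_{\mu_1}$, $v=e_{\overline{\mu_1}}$ into the symplectic condition to get $x_{\mu_1\mu_1}+x_{\overline{\mu_1}\,\overline{\mu_1}}=0$, then uses the diagonal relations $x_{\lambda\lambda}-x_{\mu\mu}=x_{\nu\nu}-x_{\rho\rho}$ along an expansion of $\mu_1-\overline{\mu_1}$ in the fundamental roots to convert this into $x_{\mu_1\mu_1}+x_{\mu_2\mu_2}+x_{\mu_3\mu_3}+x_{\mu_4\mu_4}+x_{\mu_5\mu_5}-3x_{\mu_6\mu_6}-2x_{\mu_7\mu_7}-2x_{\mu_8\mu_8}=0$, a relation whose coefficients are nonzero modulo every prime. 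You would need such an integral relation, or a separate treatment of $p=2,7$, to close this gap.
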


\begin{proof}
First, observe that the conditions on elements of the Lie algebra
$\Lie(G_{(f,h)}(K))$ are obtained from the corresponding conditions 
for elements of $\Lie(\overline{G}_{(f,h)}(K))$ by substituting
$\eps(g) = \eps'(g) = 1$ and $c_2(g) = c_3(g) = c_4(g) = 0$.
Let $g$ be a matrix satisfying the above conditions for all
$u,v,w,z\in V$. Plugging in $g = e+x\delta$ and using that the form
$f$ is four-linear, whereas the form $h$ is bilinear, we get
\begin{align*}
f(xu,v,w,z) &+ f(u,xv,w,z) + f(u,v,xw,z) + f(u,v,w,xz) 
\\
&= (\eps(g)-1)f(u,v,w,z) + c_2(g)h(u,v)h(w,z) 
\\
&\quad + c_3(g)h(u,w)h(v,z) + c_4(g)h(u,z)h(v,w)
\end{align*}
and
\begin{align*}
h(xu,v) + h(u,xv) &= (\eps'(g)-1)h(u,v).
\end{align*}
Now we show that the entries of the matrix $x$ are subject to exactly the
same linear dependences, as in the proof of Theorem~\ref{thm:Lie_dimension}.
\begin{itemize}
\item 
If $d(\lambda,\mu)=3$, then $x_{\lambda\mu}=0$.
  Indeed, in this case $\mu = \overline{\lambda}$.
  Let $(\lambda,\rho,\sigma,\tau)$ be a non-degenerate quadruple of
  weights containing $\lambda$. Set
  $u = e_\rho$, $v = e_\sigma$, $w = e_\tau$, $z = e_{\mu}$.
  Then $(\rho,\mu) = (\rho,\delta-\lambda) = (\rho,\delta) = 1/2$.
  Similarly, $(\sigma,\mu) = (\tau,\mu) = 1/2$. This means that $\mu$
  is not orthogonal and not opposite to any of the weights 
  $\rho,\sigma,\tau$.
  Thus $f(xu,v,w,z) = f(u,xv,w,z) = f(u,v,xw,z) = f(u,v,w,z) = 0$.
  It follows that $f(u,v,w,xz) = \pm x_{\lambda\mu} = 0$.
\item 
If $d(\lambda,\mu)=2$, then $x_{\lambda\mu}=0$.
  Let $\{\lambda\}^\perp\subseteq\Lambda$ be the set of weights 
  orthogonal to $\lambda$. Observe that $\mu\in\{\lambda\}^\perp$.
  Moreover,  $|\{\lambda\}^\perp| = 27$, and these weights are
  the weights of the $27$-dimensional representation of the Chevalley 
  group of type $\EE_6$. Take in $\{\lambda\}^\perp$ three weight 
  $\rho,\sigma,\tau$ forming a triad (i.e.\ pair-wise orthogonal) 
  in such a way that $\mu\notin\{\rho,\sigma,\tau\}$. Then
  $(\lambda,\rho,\sigma,\tau)$ is a non-degenerate quadruple of
  weights. Set $u=e_\rho$, $v = e_\sigma$, $w = e_\tau$, $z = e_\mu$.
  Observe that $\mu$ cannot be orthogonal to more than one of the
  weights $\rho,\sigma,\tau$. Indeed, let $\mu$ be orthogonal to two of
  them, say $\mu\perp\rho$ and $\mu\perp\sigma$. Since
  $\mu\in\{\lambda\}^\perp$, and in $\{\lambda\}^\perp$ there
  is a unique weight that is orthogonal to both $\rho$ and $\sigma$,
  namely, $\tau$. It follows that $\mu=\tau$, a contradiction.
  Thus,  $f(xu,v,w,z) = f(u,xv,w,z) = f(u,v,xw,z) = f(u,v,w,z) = 0$.
  It follows that $f(u,v,w,xz) = \pm x_{\lambda\mu} = 0$.
\item 
If $d(\lambda,\mu)=1$ and $\lambda-\mu=\nu-\rho$, then
  $x_{\lambda\mu} = \pm x_{\nu\rho}$. By the transitivity of the action 
  of the Weyl group on the pairs of weights at distance 1, we can
  move the pair $(\lambda,\mu)$ to the pair $(1,2)$. Then 
  $\nu-\rho=\alpha_7$. In the weight diagram there are exactly 
  $12$ edges marked $7$, which gives us exactly $12$ possibilities 
  for the pair $(\nu,\rho)$. This leaves us with the following three
  cases to examine:
  \begin{enumerate}
    \item $(\nu,\rho) = (\lambda,\mu)$;
    \item $d(\lambda,\rho)=2$;
    \item $(\nu,\rho) = (\overline\mu,\overline\lambda)$.
  \end{enumerate}
  The first of these cases is trivial. Next, we observe that is suffices to 
  prove the equality 
  $x_{\lambda\mu} = \pm x_{\nu\rho}$ for the second case, and then to
  use the chain of equalities
  $x_{\lambda\mu} = \pm x_{\nu\rho} = \pm x_{\overline\mu \overline\lambda}$
  to derive the third case. Thus, we are left with the analysis of the
  situation, where $d(\lambda,\rho) = 2$. There exists a non-degenerate 
  quadruple of weights $(\lambda,\rho,\sigma,\tau)$. Furthermore, we can
  stipulate that $d(\mu,\sigma)=2$. Indeed, in the case $\lambda=\lambda_1$, 
  $\mu=\lambda_2$,
  $\rho=\lambda_{\overline{16}}$, the choice $\sigma=\lambda_{\overline{15}}$,
  $\tau=\lambda_{\overline{14}}$ would do. Now, set
  $u=e_\mu$, $v=e_\rho$, $w=e_\sigma$, $z=e_\tau$.
  Then $d(\mu,\rho)=1$, and thus
  $f(u,v,xw,z) = f(u,v,w,xz) = f(u,v,w,z) = 0$.
  It follows that $f(xu,v,w,z) + f(u,xv,w,z) = 0$.
 But $f(xu,v,w,z) = \pm x_{\lambda\mu}$ and $f(u,xv,w,z) = \pm x_{\nu\rho}$.
\item 
If $d(\lambda,\mu)=1$ and $\lambda-\mu=\nu-\rho$, then
  $x_{\lambda\lambda} - x_{\mu\mu} = x_{\nu\nu} - x_{\rho\rho}$.
  As in the proof of the preceding item, we can limit ourselves
  with the analysis of the case, where $d(\lambda,\rho)=2$. Again we can
  find a non-degenerate quadruple of weights $(\lambda,\rho,\sigma,\tau)$ 
  such that
  $d(\mu,\sigma)=2$. Setting $u=e_\lambda$, $v=e_\rho$, $w=e_\sigma$,
  $z=e_\tau$, we get $(x_{\lambda\lambda} + x_{\rho\rho} +
  x_{\sigma\sigma} + x_{\tau\tau} - \eps(g) +
  1)f(e_\lambda,e_\rho,e_\sigma,e_\tau) = 0$. It follows that 
  $x_{\lambda\lambda}+x_{\rho\rho}+x_{\sigma\sigma}+x_{\tau\tau}=\eps(g)-1$.
  On the other hand, $(\mu,\nu,\sigma,\tau)$ is another non-degenerate
  quadruple. Setting $u=e_\mu$, $v=e_\nu$, $w=e_\sigma$,
  $z=e_\tau$, we get 
  $(x_{\mu\mu} + x_{\nu\nu} + x_{\sigma\sigma} + x_{\tau\tau} -\eps(g) +
  1)f(e_\mu,e_\nu,e_\sigma,e_\tau) = 0$. It follows that 
  $x_{\mu\mu} + x_{\nu\nu} + x_{\sigma\sigma} + x_{\tau\tau} = \eps(g) - 1$.
  Comparing these expressions, we can conclude that
  $x_{\lambda\lambda}-x_{\mu\mu} = x_{\nu\nu}-x_{\rho\rho}$.
\end{itemize}

Thus, as in the proof of Theorem~\ref{thm:Lie_dimension}, it turns
out that the dimension of the Lie algebra $\Lie(\overline{G}_{(f,h)}(K))$
does not exceed $126+8=134$. The same arguments are also applicable
for the case of $\Lie(G_{(f,h)}(K))$. It suffices to set
$\eps(g)=\eps'(g)=1$ and $c_2(g)=c_3(g)=c_4(g)=0$. Again, we can conclude 
that the dimension of $\Lie(G_{(f,h)}(K))$ does not exceed $134$: the
entries $x_{\lambda\mu}$ do not contribute to the dimension when
$d(\lambda,\mu)\geq 2$, they make a contribution $126$, when $d(\lambda,\mu)
= 1$, and, finally, they make a contribution $\le 8$, for $d(\lambda,\mu) = 0$.

To conclude the proof of the theorem, we have to find a non-trivial relation 
among these last entries. From the final paragraph of the proof
of Theorem~\ref{thm:Lie_dimension}) we know that the entries 
$z_{\lambda\lambda}$ are linear combinations of 8 of them, namely
$z_{\mu\mu}$, for $\mu = \mu_1,\dots,\mu_8$. Here, as 
$\mu_1,\dots,\mu_8$ one can takes the weights
$1,2,3,4,5,6,7,8$, respectively. Now, set 
$u = e_{\mu_1}$, $v = e_{\overline{\mu_1}}$. Plugging these entries into the
equation 
$h(xu,v) + h(u,xv) = 0$, we get that  $x_{\mu_1\mu_1} + x_{\overline{\mu_1}\overline{\mu_1}}
= 0$.
One the other hand, $\mu_1 - \overline{\mu_1} = \delta = (2\alpha_1 +
3\alpha_2 + 4\alpha_3 + 6\alpha_4 + 5\alpha_5+ 4\alpha_6 + 3\alpha_7)$,
and thus the relations
$x_{\lambda\lambda} - x_{\mu\mu} = x_{\nu\nu} - x_{\rho\rho}$
for $\lambda-\mu = \nu-\rho\in\Phi$
imply that
\begin{align*}
x_{\mu_1\mu_1} - x_{\overline{\mu_1}\overline{\mu_1}} &=
2(x_{\mu_7\mu_7} - x_{\mu_8\mu_8})
+ 3(x_{\mu_5\mu_5} - x_{\mu_6\mu_6}) 
\\
&\quad + 4(x_{\mu_5\mu_5} - x_{\mu_7\mu_7})
 + 6(x_{\mu_4\mu_4} - x_{\mu_5\mu_5}) 
\\
&\quad + 5(x_{\mu_3\mu_3} - x_{\mu_4\mu_4})
 + 4(x_{\mu_2\mu_2} - x_{\mu_3\mu_3}) 
\\
&\quad + 3(x_{\mu_1\mu_1} - x_{\mu_2\mu_2}) 
\\
&= 3x_{\mu_1\mu_1} + x_{\mu_2\mu_2}
  + x_{\mu_3\mu_3} + x_{\mu_4\mu_4} 
\\ 
&\quad + x_{\mu_5\mu_5} - 3x_{\mu_6\mu_6}
  - 2x_{\mu_7\mu_7} - 2x_{\mu_8\mu_8}.
\end{align*}
Comparing this with the equality $x_{\mu_1\mu_1} + x_{\overline{\mu_1}\overline{\mu_1}} =
0$, we get that
$x_{\mu_1\mu_1} + x_{\mu_2\mu_2} + x_{\mu_3\mu_3} + x_{\mu_4\mu_4} +
x_{\mu_5\mu_5} - 3x_{\mu_6\mu_6} - 2x_{\mu_7\mu_7} - 2x_{\mu_8\mu_8} =
0$.
This is precisely the desired non-trivial linear relation among the 
elements $x_{\mu_i\mu_i}$, which, over a field of any characteristic,
shows that the dimension of our Lie algebra is $1$ smaller than the 
above bound. Thus, $\dim\Lie(G_{(f,h)}(K)) \leq 133$, as claimed.
\end{proof}

%%%%%%%%%%%%%%%%%%%%%%%%%%%%%%%%%%%%%%%%%%%%%%%%%%%%%%%%%%%%%%%%%%%%%%

\section{Proofs of Theorems~\ref{thm:g_i} and~\ref{thm:g_fh}}

Now we are all set to finish the proofs of our main results.

\begin{proof}[Proof of Theorem~\ref{thm:g_i}]
Consider the rational representation of algebraic groups
$$
\pi\colon\overline{G}(\EE_7,{-})\to\GL_{56}
$$
with the highest
weight $\varpi_7$. This representation is faithful, and by
Theorem~\ref{thm:e7_in_fix} its image is contained in~$G_I$. 
We wish to apply to this morphism $\pi$ Lemma~\ref{lem:Waterhouse}.

Indeed, for an algebraically closed field $K$ and for $K[\delta]$
the representation $\pi$ is a monomorphism. This means that the
condition 2 of Lemma~\ref{lem:Waterhouse} holds. Clearly,
$\dim(\overline{G}(\EE_7,K)) = 134$, and Theorem~\ref{thm:Lie_dimension} 
implies that also $\dim_K(\Lie(G_K))\leq 134$, so that the condition 1
of Lemma~\ref{lem:Waterhouse} follows from the fact that by 
Lemma~\ref{lem:irreducible} already the normalizer of $\overline{G}(\EE_7,K)$ 
in $\GL(56,K)$ is contained in --- and in fact coincides with --- $G_I(K)$. 
This means that we can apply Lemma~\ref{lem:Waterhouse} to conclude
that $\pi$ establishes an isomorphism of $\overline{G}(\EE_7,{-})$ 
and $G_I$, as affine group schemes over $\Int$.
\end{proof}

\begin{proof}[Proof of Theorem~\ref{thm:g_fh}]
Here again we can use Lemma~\ref{lem:Waterhouse}. The situation is
completely analogous to the proof of Theorem~\ref{thm:g_i}, only that
instead of reference to Theorem~\ref{thm:e7_in_fix}, one should 
invoke the main theorem of~\cite{Luzgarev_e7_invariants}, and
instead of Theorem~\ref{thm:Lie_dimension} one should use 
Theorem~\ref{thm:Lie_dimension_fh}.
\end{proof}

\begin{lemma}\label{lemma:Petrov}
In the definition of the group $\overline{G}_{(f,h)}(R)$ always 
$\eps'(g) = (\eps(g))^2$ and
$c_2(g) = c_3(g) = c_4(g) = 0$. In other words,
\begin{align*}
\overline{G}_{(f,h)}(R) &= \{g\in\GL(56,R)\mid\text{ there exists
  $\eps\in R^*$ such that }
\\
&\qquad f(gu,gv,gw,gz)=\eps f(u,v,w,z)
\\
&\qquad \text{and }h(gu,gv)=\eps^2h(u,v)\text{ for all }
u,v,w,z\in V\}.
\end{align*}
\end{lemma}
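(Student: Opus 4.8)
The plan is to deduce the lemma from Theorem~\ref{thm:g_fh}. That theorem identifies $\overline{G}_{(f,h)}$ with the Chevalley--Demazure scheme $\overline{G}_{\sic}(\EE_7,-)$, which is flat over $\Int$ with geometrically connected fibres, hence integral, and which is smooth by Theorems~\ref{thm:Lie_dimension} and~\ref{thm:Lie_dimension_fh}. Over a field of characteristic $0$ the form $f$ and the three products $h(u,v)h(w,z)$, $h(u,w)h(v,z)$, $h(u,z)h(v,w)$ are linearly independent, so on the generic fibre the scalars $\eps(g)$, $\eps'(g)$, $c_2(g)$, $c_3(g)$, $c_4(g)$ are uniquely determined by $g$ and are regular functions of its matrix entries. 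Consequently $c_2$, $c_3$, $c_4$, together with the regular function measuring the failure of the relation between $\eps$ and $\eps'$, are regular functions on the integral scheme $\overline{G}_{(f,h)}$, and to prove that they vanish identically it suffices to check this on the set of $\mathbb{C}$-points, which is dense in the generic fibre and hence in the whole scheme.

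Over $K=\mathbb{C}$ I would invoke the product decomposition $\overline{G}(\EE_7,K)=G(\EE_7,K)\cdot T_{\sic}(\EE_8,K)$ from~\S\ref{sec:extended_group}. On the first factor there is nothing to do: by the $G(\EE_7,K)$-invariance of $f$ and $h$ established in~\S\ref{sec:invariants}, every $g\in G(\EE_7,K)$ preserves both forms, so $\eps(g)=\eps'(g)=1$ and $c_2(g)=c_3(g)=c_4(g)=0$. For an element $t$ of the torus the computation is diagonal and explicit. In the definition of $f$ the coefficient $c(\lambda,\mu,\nu,\rho)$ comes from the bracket $[[[[e_{-\delta},e_\lambda],e_\mu],e_\nu],e_\rho]$, whose weight is $-\delta+\lambda+\mu+\nu+\rho$; since it must be proportional to $e_\delta$, the coefficient vanishes unless
$$
\lambda+\mu+\nu+\rho=2\delta,
$$
while $h$ pairs each weight $\lambda$ only with $\overline{\lambda}=\delta-\lambda$. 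Writing the action of $t$ on $v^{\lambda}$ as multiplication by $\eta^{\ell(\lambda)}$ for the relevant cocharacter $\ell$, we see that $t$ rescales $f$ by $\eta^{\ell(2\delta)}$ and $h$ by $\eta^{\ell(\delta)}$. In particular $t$ produces no symplectic products, whence $c_2(t)=c_3(t)=c_4(t)=0$; and because $\ell(2\delta)=2\ell(\delta)$ for every cocharacter $\ell$, the factor rescaling $f$ is exactly the square of the factor rescaling $h$, which is the relation asserted in the statement.

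It remains to pass from the two factors to the whole group. The set of $g\in\overline{G}(\EE_7,K)$ for which $f$ and $h$ are each multiplied by a scalar, with the $f$-scalar equal to the square of the $h$-scalar, is a subgroup: in the absence of symplectic products the two multipliers are multiplicative, and the quadratic relation between them is preserved under products and inverses. This subgroup contains both $G(\EE_7,K)$ and $T_{\sic}(\EE_8,K)$, so it is all of $\overline{G}(\EE_7,K)$. Hence $c_2$, $c_3$, $c_4$ and the discrepancy between the two multipliers vanish at every $\mathbb{C}$-point, and by the density argument of the first paragraph they vanish identically; this proves the lemma for an arbitrary commutative ring $R$.

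The step I expect to be the main obstacle is guaranteeing that $\eps$, $\eps'$, $c_2$, $c_3$, $c_4$ are genuinely well-defined regular functions, i.e.\ the linear independence of $f$ and the three symplectic products. This is precisely what degenerates in characteristic $2$, where these products become symmetric, and it is the reason the reduction to the generic fibre---legitimate only because $\overline{G}_{(f,h)}$ is integral and smooth over $\Int$---is doing real work. Once everything has been transported to $\mathbb{C}$, the remaining input is the clean weight-sum observation ($2\delta$ for $f$ against $\delta$ for $h$) and the elementary subgroup argument.
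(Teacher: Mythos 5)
Your argument follows essentially the same path as the paper's own proof, just in geometric rather than functorial clothing. The paper also starts from Theorem~\ref{thm:g_fh}, also gets everything on $G(\EE_7,-)$ for free, and also obtains the key relation by evaluating on the torus: it observes that $\eps$ and $\eps'$ are characters of $\overline{G}(\EE_7,-)$ trivial on the commutator subgroup, computes $\eps(h_{\varpi_7}(\eta))=\eta^2$ and $\eps'(h_{\varpi_7}(\eta))=\eta$, and concludes $\eps=(\eps')^2$; the coefficients $c_i$ are then killed by the $1$-cocycle identity $c_i(gh)=\eps(g)c_i(h)+c_i(g)\eps(h)$ together with their vanishing on the two generating subgroups. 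Your weight-sum observation ($2\delta$ for the monomials of $f$ against $\delta$ for those of $h$) is exactly that torus computation, and your closure-under-products argument is the cocycle/generation step in disguise. One caveat: the relation you derive --- the $f$-multiplier is the \emph{square} of the $h$-multiplier --- agrees with the paper's proof, but the displayed formula in the statement of Lemma~\ref{lemma:Petrov} asserts the opposite ($h$ rescaled by $\eps^2$); so your remark that this ``is the relation asserted in the statement'' is not literally true. That discrepancy is internal to the paper, but it should be flagged rather than absorbed.

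There is, however, a genuine leak in your descent step. Linear independence of $f$ and the three symplectic products in characteristic $0$ only makes $\eps,\eps',c_2,c_3,c_4$ into regular functions on the \emph{generic fibre}, i.e.\ elements of $\mathcal{O}(X)\otimes\mathbb{Q}$ for $X=\overline{G}_{(f,h)}$; it does not, as you assert, make them regular functions on the integral scheme over $\Int$. Vanishing of functions that exist only after inverting all primes proves the lemma only for $\mathbb{Q}$-algebras $R$, and the rings the lemma is really about --- characteristic $2$ and mixed characteristic --- are exactly the ones left out. Two repairs are available. (i) Show the four forms remain linearly independent modulo every prime: tetrad monomials occur in $f$ with coefficient $\pm1$ and in none of the products $h(\cdot,\cdot)h(\cdot,\cdot)$, which forces the coefficient of $f$ in any dependence to vanish, and each of the three products contains monomials with coefficient $\pm1$ occurring in neither of the other two; hence the span is a saturated sublattice and the extracting functionals are defined over $\Int$. (Your worry that independence ``degenerates in characteristic $2$'' is misplaced: what becomes symmetric there is the \emph{sum} of the three products, which is irrelevant to independence.) (ii) Bypass the $c_i$ altogether: since $h$ is unimodular, $\eps'$ is an honest regular function over $\Int$; the coefficients of the four-linear form $f\circ g^{\mathrm{univ}}-(\eps')^2 f$, with $g^{\mathrm{univ}}$ the universal point, then lie in $\mathcal{O}(X)$, vanish at all $\mathbb{C}$-points by your computation, hence vanish identically because $X$ is integral and $\Int$-flat; specializing $g^{\mathrm{univ}}$ to any $R$-point gives the lemma for every commutative ring $R$. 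With either repair your proof is correct; as written, it stops short of the rings for which the statement has content.
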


\begin{proof}
We have already shown that the group $\overline{G}_{(f,h)}(R)$ 
coincides with the extended Chevalley group $\overline{G}(\EE_7,R)$.
Clearly, both $\eps$ and $\eps'$ are homomorphisms from
$\overline{G}(\EE_7,{-})$ to $\mathbb G_m$, trivial on the commutator subgroup.
Thus, their appropriate powers should coincide. Calculating their values on
the semi-simple element $h_{\varpi_7}(\eta)$, we see that $\eps$ 
takes the value $\eta^2$, whereas $\eps'$ takes the value $\eta$. 
Thus, $\eps = (\eps')^2$.

On the other hand, it is easy to see that $c_i$ enjoy the 1-cocycle 
identity
$c_i(gh) = \eps(g)c_i(h) + c_i(g)\eps(h)$ 
and vanish on both the commutator subgroup and the semi-simple elements 
of the form $h_{\varpi_7}(\eta)$. As an algebraic group, the
extended Chevalley group is generated by these two subgroups,
so that $c_i$ are identically 0.
\end{proof}

%%%%%%%%%%%%%%%%%%%%%%%%%%%%%%%%%%%%%%%%%%%%%%%%%%%%%%%%%%%%%

\section{Proof of Theorem~\ref{thm:normaliser}}

\begin{proof}[Proof of Theorem~\ref{thm:normaliser}] 
Clearly, $\overline{G}(\EE_7,R)\leq N(G(\EE_7,R))$.
It is well known, see, for instance~\cite{Hazrat_Vavilov}) and
references there, 
that for any irreducible root system $\Phi$ of rank strictly larger 
than 1, and for any commutative ring $R$ the elementary group 
$E(\Phi,R)$ is normal in the extended Chevalley group
$\overline{G}(\Phi,R)$. Therefore,
$\overline{G}(\EE_7,R)\leq N(E(\EE_7,R))$. On the other hand,
both normalizers $N(E(\EE_7,R))$ and $N(G(\EE_7,R))$ are obviously
contained in the transporter $\Tran(E(\EE_7,R),G(\EE_7,R))$. Thus,
to finish the proof of the theorem, it suffices to verify that
$\Tran(E(\EE_7,R),G(\EE_7,R))$ is contained in~$\overline{G}(\EE_7,R)$.

Let $g\in\GL(56,R)$ belong to $\Tran(E(\EE_7,R),G(\EE_7,R))$.
WE pick any root $\alpha\in\Phi$ and any $\xi\in R$.
Then $a = gx_{\alpha}(\xi)g^{-1}$ lies in $G(\EE_7,R)$, and thus
$f(au,av,aw,az)=f(u,v,w,z)$ and $h(au,av)=h(u,v)$
for all  $u,v,w,z\in V$. Therefore, substituting $(gu,gv,gw,gz)$ for
$(u,v,w,z)$, we get
\begin{align*}
&f(gx_\alpha(\xi)u, gx_\alpha(\xi)v, gx_\alpha(\xi)w, gx_\alpha(\xi)z)
\\
&\quad = f(gu,gv,gw,gz)\text{ for all } u,v,w,z\in V.
\end{align*}
Consider the form $F\colon V\times V\times V\times V\to R$,
defined by
$$
F(u,v,w,z) = f(gu,gv,gw,gz).
$$
By our assumption, one has
$$
F(x_\alpha(\xi)u,x_\alpha(\xi)v,x_\alpha(\xi)w,x_\alpha(\xi)z)
= F(u,v,w,z)
$$
for all $u,v,w,z\in V$ and for all $\alpha\in\Phi$, $\xi\in R$.
Root unipotents $x_\alpha(\xi)$ generate the elementary group
$E(\EE_7,R)$. It follows that the form $F$ is invariant under the action 
of this group. Obviously, the form $F$ is four-linear.
Thus, we can apply to this form the main result 
of~\cite[Theorem~2]{Luzgarev_e7_invariants}). It says that 
in this case the form $F$ has the shape
\begin{align*}
F(u,v,w,z) &= \eps f(u,v,w,z) + c_2 h(u,v)h(w,z) 
\\
&+ c_3 h(u,w)h(v,z) + c_4 h(u,z)h(v,w),
\end{align*}
for some $\eps,c_2,c_3,c_4\in R$. Plugging in $g^{-1}$
instead of $g$, we can conclude that $\eps\in R^*$.

A similar calculation for $h$ shows that $h(gu,gv) = \eps' h(u,v)$
for some $\eps'\in R$. Again, plugging in $g^{-1}$ instead of
$g$, we can conclude that $\eps'\in R^*$.

This shows that $g$ belongs to the group $\overline{G}_{(f,h)}(R)$,
which by Theorem~\ref{thm:g_fh} coincides with $\overline{G}(\EE_7,R)$.
\end{proof}

The authors are grateful to Ernest Borisovich Vinberg, who pointed
out a serious error in a preliminary version of this paper.
Also, the authors are grateful to the referee, for the statement
and proof of Lemma~\ref{lemma:Petrov}.


\begin{thebibliography}{XX}

\normalsize 




%22
\bibitem{Aschbacher_multi}
Aschbacher~M., 
\emph{Some multilinear forms with large isometry groups}, Geom.
Dedicata {\bf 25} (1988), no.~1-3, 417--465.
%\MR{0925846 (89c:20067)}

%23
\bibitem{Berman_Moody}
Berman~S., Moody~R.~V., 
\emph{Extensions of Chevalley groups}, Israel J. Math.  {\bf 22} (1975), 
no.~1, 42--51.
%\MR{0390077 (52:10903)}

%1
\bibitem{Borelrus}
A. Borel, \emph{Properties and linear representations of Chevalley
groups}, Seminar on Algebraic Groups and Related Finite Groups,
Lecture Notes in Math., vol. 131, Springer, Berlin, 1970, pp. 1--55.
%\MR{0258838 (41:3484)}

%3
\bibitem{Bourbaki46}
N. Bourbaki, \'El\'ements de Math\'ematique. Fasc. XXXIV. Groupes et
alg\`ebres de Lie. Chapitres IV \'a VI, Actualit\'es Scientifiques et
Industrielles, 	No. 1337, Hermann. Paris, 1968.
%\MR{0240238}


%24
\bibitem{Brown_groups}
Brown~R.~B., 
\emph{Groups of type $\mathrm E_7$}, J. Reine Angew. Math. {\bf 236}
(1969), 79--102.
%\MR{0248185 (40:1439)}

%2
\bibitem{Bunina1}
E. I. Bunina, \emph{Automorphisms of Chevalley groups of types
$\mathrm{A}_l$, $\mathrm{D}_l$, and $\mathrm{E}_l$ over local rings
with $1/2$},
Fundam. Prikl. Mat. {\bf 15} (2009), no. 2, 35--59;
English transl., J. Math. Sci. (N. Y.) {\bf 167} (2010),
no. 6, 749--766.
%\MR{2744958 (2012b:20111)}

%21
\bibitem{Chevalleyrus}
C. Chevalley,
\emph{Sur certains groupes simples}, Tohoku Math. J. (2)
{\bf 7} (1955), no. 1, 14--66.
%\MR{0073602 (17:457c)}

%25
\bibitem{Cooperstein_E7}
Cooperstein~B.~N., 
\emph{The fifty-six-dimensional module for $\mathrm E_7$. {\rm I}.
A four form for $\mathrm E_7$}, J. Algebra {\bf 173} (1995), no.~2, 361--389.
%\MR{1325780 (96f:20063)}

%26
\bibitem{Dixon}
Dixon~J.~D., 
\emph{Rigid embedding of simple groups in the general linear group}, 
Canad. J. Math. {\bf 29} (1977), no.~2, 384--391.
%\MR{0435242 (55:8202)}

%15
\bibitem{Dynkin}
E. B. Dynkin,
\emph{Maximal subgroups of classical groups},
Tr. Moscov. Mat. Obshch. {\bf 1} (1952), 39--166. (Russian)
%\MR{0049903 (14:244d)}

%27
\bibitem{Faulkner_Ferrar}
Faulkner~J.~R., Ferrar~J.~C., 
\emph{Exceptional Lie algebras and related algebraic and geometric structures}, 
Bull. London Math. Soc. {\bf 9} (1977), no.~1, 1--35.
%\MR{0444729 (56:3079)}

%28
\bibitem{Freudenthal_E7}
Freudenthal~H., 
\emph{Sur le groupe exceptionnel $\mathrm E_7$}, Proc.
Nederl. Akad. Wetensch. Ser. A {\bf 56} (1953), 81--89.
%\MR{0054609 (14:948a)}

%29
\bibitem{Freudentahl_oktavenebene}
Freudenthal~H., 
\emph{Beziehungen der $\mathrm E_7$ und $\mathrm E_8$ zur Oktavenebene},
I--XI, Proc. Nederl. Akad. Wetensch. Ser. A {\bf 57} (1954), 218--230,
363--368; {\bf 58} (1955), 151--157, 277--285;
{\bf 62} (1959), 165--201, 447--474; {\bf 66} (1963), 457--487.

%30
\bibitem{Haris}
Haris~S.~J., 
\emph{Some irreducible representations of exceptional algebraic groups}, 
Amer. J. Math. {\bf 93} (1971), no.~1, 75--106.
%\MR{0279103 (43:4829)}

%31
\bibitem{Hazrat_Vavilov}
Hazrat~R., Vavilov~N., 
\emph{$K_1$ of Chevalley groups are nilpotent},
J. Pure Appl. Algebra {\bf 179} (2003), no.~1--2, 99--116.
%\MR{1958377 (2004i:20081)}

%20
\bibitem{Humphreys_LAG}
J. E. Humphreys,
\emph{Linear algebraic groups}, Grad. Texts in Math., vol. 21,
Springer-Verlag, New York--Heidelberg, 1975.
\MR{0396773 (53:633)}

%32
\bibitem{Jacobson_Jordan}
Jacobson~N., 
\emph{Structure and representations of {J}ordan algebras},
Amer. Math. Soc. Colloq. Publ., vol.~39, Amer.
Math. Soc., Providence, RI., 1968.
%\MR{0251099 (40:4330)}

%33
\bibitem{Lichtenstein}
Lichtenstein~W., 
\emph{A system of quadrics describing the orbit of the highest
weight vector}, Proc. Amer. Math. Soc {\bf 84} (1982), no.~4, 605--608.
%\MR{0643758 (83h:14046)}

%34
\bibitem{Liebeck_Seitz_subgroup_structure_1998}
Liebeck~M.~W., Seitz~G.~M., 
\emph{On the subgroup structure of exceptional groups of Lie type}, 
Trans. Amer. Math. Soc. {\bf 350} (1998), no.~9,  3409--3482.
%\MR{1458329 (99j:20055)}

%35
\bibitem{Lurie}
Lurie~J., 
\emph{On simply laced Lie algebras and their minuscule representations},
Comment. Math. Helv. {\bf 166} (2001), no.~3, 515--575.
%\MR{1854697 (2002g:17015)}


%16
\bibitem{Luzgarev_overgroups}
A. Yu. Luzgarev,
\emph{On overgroups of $E(\mathrm E_6,R)$ and $E(\mathrm E_7,R)$ 
in their minimal representations},
J. Math. Sci. (N.Y.) {\bf 134} (2006), no. 6,
2558--2571.
%\MR{2117858 (2006k:20098)}

%18
\bibitem{Luzgarev_diss}
A. Yu. Luzgarev, 
\emph{Overgroups of exceptional groups}, Kand. diss.,
S.-Petersburg. Univ., SPb, 2008. (Russian).

%17
\bibitem{Luzgarev_F4}
A. Yu. Luzgarev,
\emph{Overgroups of $\mathrm F_4$ in $\mathrm E_6$ over commutative
rings}, St. Petersburg. Math. J. {\bf 20} (2009),
no. 6, 955--981.
%\MR{2530897 (2010f:20045)}

%19
\bibitem{Luzgarev_e7_invariants}
A. Yu. Luzgarev,
\emph{Fourth-degree invariants for $G(\mathrm E_7,R)$ not depending on
the characteristic},
Vestn. St. Petersbg. Univ., Math. {\bf 46} (2013), no. 1, pp. 29--34.

%37
\bibitem{Luzgarev2014}
Luzgarev~A., 
\emph{Equations determining the orbit of the highest weight vector
in the adjoint representation}, \url{http://arxiv:1401.0849}.

%36
\bibitem{Luzgarev_Petrov_Vavilov}
Luzgarev~A., Petrov~V., Vavilov~N., 
\emph{Explicit equations on orbit of the highest weight vector}. 
(to appear)

%38
\bibitem{Matsumoto}
Matsumoto~H., 
\emph{Sur les sous-groupes arithm\'etiques des groupes
semi-simples d\'eploy\'es}, Ann. Sci. \'Ecole Norm. Sup. (4) {\bf 2}
(1969), 1--62.
%\MR{0240214 (39:1566)}

%39
\bibitem{Petrov_unitary}
Petrov~V., 
\emph{Overgroups of unitary groups}, $K$-Theory {\bf 29} (2003),
no.~3, 147--174.
%\MR{2028500 (2004m:20094)}

%40
\bibitem{Plotkin_Semenov_Vavilov}
Plotkin~E.~B., Semenov~A.~A., Vavilov~N.~A., 
\emph{Visual basic  representations{\rm:} an atlas}, 
Internat. J. Algebra Comput. {\bf 8} (1998),  no.~1, 61--95.
%\MR{1492062 (98m:17010)}

%41
\bibitem{Seitz_maximal_classical}
Seitz~G.~M., 
\emph{The maximal subgroups of classical algebraic groups}, Mem.
Amer. Math. Soc. {\bf 67} (1987), no.~365.
%\MR{0888704 (88g:20092)}

%42
\bibitem{Stein_generators}
Stein M.~R., 
\emph{Generators, relations and coverings of Chevalley groups
over commutative rings}, Amer. J. Math. {\bf 93} (1971), 965--1004.
%\MR{0322073 (48:437)}

%43
\bibitem{Stein_stability}
Stein~M.~R., 
\emph{Stability theorems for $K_1$, $K_2$ and related functors
modeled on Chevalley groups}, Japan. J. Math. (N.S.) {\bf 4} (1978),
no.~1, 77--108.
%\MR{0528869 (81c:20031)}

%4 ???
\bibitem{Vavilov_weight}
N. A. Vavilov, \emph{Weight elements of Chevalley groups},
Soviet Math. Dokl. {\bf 37} (1988), no. 1, 92--95.
%\MR{0925952 (88m:20093)}

%5 ???
\bibitem{Vavilov_tor90}
N. A. Vavilov, \emph{Subgroups of Chevalley groups containing a
maximal torus}, Proceedings of the St. Petersburg Mathematical Society,
{\bf 1} (1990), AMS Translations (series 2), 55--100.
%\MR{1104207 (92f:20046)}

%44
\bibitem{Vavilov_structure}
Vavilov~N.~A., 
\emph{Structure of Chevalley groups over commutative rings},
Nonassociative Algebras and Related Topics (Hiroshima, 1990), World Sci.
Publ., River Edge, NJ, 1991, pp.~219--335.
%\MR{1150262 (92k:20090)}

%45
\bibitem{Vavilov_third_look}
Vavilov~N.~A., 
\emph{A third look at weight diagrams}, Rend. Sem. Mat. Univ.
Padova {\bf 104} (2000), 201--250.
%\MR{1809357 (2001i:20099)}

%46
\bibitem{Vavilov_A3}
Vavilov~N.~A., 
\emph{An $\mathrm A_3$-proof of structure theorems for Chevalley
groups of types $\mathrm E_6$ and $\mathrm E_7$}, Internat. J. Algebra Comput.
{\bf 17} (2007), no.~5-6, 1283--1298.
%\MR{2355697 (2009h:20054)}

%6
\bibitem{Vavilov_znaki}
N. A. Vavilov, \emph{Can one see the signs of structure constants?}
St. Petersburg Math. J. {\bf 19} (2008), no. 4, 519--543.
%\MR{2381932 (2009b:20087)}

%7
\bibitem{Vavilov_weightnew}
N. A. Vavilov, \emph{Weight elements of Chevalley groups},
St. Petersburg Math. J. {\bf 20} (2009), no. 1, 23--57.
%\MR{2411968 (2009c:20086)}

%8
\bibitem{Vavilov_numerology}
N. A. Vavilov, 
\emph{Numerology of square equations}, St. Petersburg Math. J.
{\bf 20} (2009), no. 5, 687--707.
%\MR{2492358 (2010a:20094)}

%9
\bibitem{Vavilov_more_numerology}
N. A. Vavilov, 
\emph{Some more exceptional numerology},
J. Math. Sci. (N. Y.)
{\bf 171} (2010), no. 3, 317--321.
%\MR{2749272 (2012b:20116)}



%10
\bibitem{Vavilov_Luzgarev_norme6}
N. A. Vavilov \and A. Yu. Luzgarev,
\emph{Normalizer of the Chevalley group of type $\mathrm E_6$},
St. Petersburg. Math. J. {\bf 19} (2008), no. 5,
699--718.
%\MR{2381940 (2008m:20077)}

%11
\bibitem{Vavilov_Luzgarev_miniE7}
N. A. Vavilov \and A. Yu. Luzgarev,
\emph{Chevalley groups of type $\mathrm E_7$ in the $56$-dimensional
representation},
J. Math. Sci. {\bf 180} (2012), no. 3, 197--251.
%\MR{2784131 (2012d:20103)}

%12
\bibitem{Vavilov_Perelman}
N. A. Vavilov \and E. Ya. Perel'man,
\emph{Polyvector representations of ${\mathrm{GL}}_n$},
J. Math. Sci. (N. Y.)
{\bf 145} (2007), no. 1, 4737--4750.
%\MR{2354607 (209c:20083)}

%13
\bibitem{Vavilov_Petrov_Ep}
N. A. Vavilov \and V. A. Petrov, 
\emph{On overgroups of ${\mathrm Ep}(2l,R)$},
St. Petersburg. Math. J. {\bf 15} (2004), no. 4, 515--543.
%\MR{2068980 (2005d:20086)}

%14
\bibitem{Vavilov_Petrov_EO}
N. A. Vavilov \and V. A. Petrov, 
\emph{On overgroups of $\mathrm{EO}(n,R)$},
St. Petersburg. Math. J. {\bf 19} (2008), no. 2, 167--195.
% \MR{2333895 (2008d:20094)}


%47
\bibitem{Vavilov_Plotkin}
Vavilov~N.~A., Plotkin~E.~B., 
\emph{Chevalley groups over commutative rings.
{\rm I}. Elementary calculations}, Acta Appl. Math. {\bf 45} (1996), no.~1,
73--113.
%\MR{1409653 (97h:20056)}

%48
\bibitem{Waterhouse_det}
Waterhouse~W.~C., 
\emph{Automorphisms of $\mathrm{det}(X_{ij})${\rm:} the group scheme
approach}, Adv. in Math. {\bf 65} (1987), no.~2, 171--203.
%\MR{0900267 (88k:14025)}

\end{thebibliography}
\end{document}